\theoremstyle{plain}
\newtheorem{thm}{Theorem}[section]
\newtheorem{cor}[thm]{Corollary}
\newtheorem{lem}[thm]{Lemma}
\newtheorem{prop}[thm]{Proposition}
\theoremstyle{definition}
\newtheorem{defn}[thm]{Definition}
\newtheorem{ex}[thm]{Example}
\theoremstyle{remark}
\newtheorem{rem}[thm]{Remark}
\theoremstyle{setup}
\newtheorem{Setup}[thm]{Setup}
\newtheorem*{Key Statement}{Key Statement}
\newtheorem{Conjecture}[thm]{Conjecture}
\newtheorem{Question}[thm]{Question}
\numberwithin{equation}{section}
\newcommand{\ul}[1]{\underline{#1}}
\DeclareMathAlphabet{\mathpzc}{OT1}{pzc}{m}{it}
\renewcommand{\ker}{\mathsf{ker}}
\newcommand{\im}{\mathsf{im}}
\renewcommand{\dim}{\mathsf{dim}}
\DeclareMathOperator{\coh}{\mathsf{coh}}
\DeclareMathOperator{\Rep}{\mathsf{Rep}}
\DeclareMathOperator{\gldim}{\mathsf{gl.dim}}
\DeclareMathOperator{\Mod}{\mathsf{Mod}}
\DeclareMathOperator{\Hom}{\mathsf{Hom}}
\DeclareMathOperator{\RHom}{\mathsf{RHom}}
\DeclareMathOperator{\Ext}{\mathsf{Ext}}
\DeclareMathOperator{\End}{\mathsf{End}}
\newcommand{\opname}[1]{\operatorname{\mathsf{#1}}}
\newcommand{\ten}{\otimes}
\newcommand{\lten}{\overset{\opname{L}}{\ten}}
\renewcommand{\ker}{\opname{ker}\nolimits}
\newcommand{\thick}{\opname{thick}\nolimits}
\newcommand{\per}{\opname{per}\nolimits}
\renewcommand{\mod}{\opname{mod}\nolimits}
\newcommand{\proj}{\opname{proj}\nolimits}
\renewcommand{\Mod}{\opname{Mod}\nolimits}
\newcommand{\ca}{{\mathcal A}}
\newcommand{\cc}{{\mathcal C}}
\newcommand{\cd}{{\mathcal D}}
\newcommand{\cn}{{\mathcal N}}
\newcommand{\cq}{{\mathcal Q}}
\newcommand{\cs}{{\mathcal S}}
\newcommand{\ct}{{\mathcal T}}
\newcommand{\ra}{\rightarrow}
\newcommand{\id}{\mathbf{1}}
\newcommand{\bsm}{\begin{smallmatrix}}
\newcommand{\esm}{\end{smallmatrix}}
\renewcommand{\mod}{\mathsf{mod}}
\renewcommand{\AA}{\mathbb{A}}
\newcommand{\ZZ}{\mathbb{Z}}
\newcommand{\recollto}{%
        \mathrel{\vcenter{\mathsurround0pt
                \ialign{##\crcr
                        \noalign{\nointerlineskip}$\,\,\scriptstyle\leftarrow$\crcr
                        \noalign{\nointerlineskip}$\longrightarrow$\crcr
                        \noalign{\nointerlineskip}$\,\,\scriptstyle\leftarrow$\crcr
                }%
        }}%
}
\newcommand{\recoll}[3]{#1 \recollto #2 \recollto #3}                                                         
\title[Derived categories of quasi-hereditary algebras and their derived composition series]{Derived categories of quasi-hereditary algebras \\ and their derived composition series}
\author[Martin Kalck]{Martin Kalck\thanks{This work was supported by the EPSRC Postdoctoral Fellowship EP/L017962/1.}}
\begin{document}

\begin{abstract}
We study composition series of derived module categories in the sense of Angeleri H{\"u}gel, K{\"o}nig \&  Liu for quasi-hereditary algebras. More precisely, we show that having a composition series with all factors being derived categories of vector spaces does not characterise derived categories of quasi-hereditay algebras. This gives a negative answer to a question of Liu \& Yang and the proof also confirms part of a conjecture of Bobi{\'n}ski \& Malicki. In another direction, we show that derived categories of quasi-hereditary algebras can have composition series with lots of different lengths and composition factors. In other words, there is no Jordan-H{\"o}lder property for composition series of derived categories of quasi-hereditary algebras.
\end{abstract}

\begin{classification}
Primary 18E30; Secondary 16G20.
\end{classification}

\begin{keywords}
Triangulated categories, quasi-hereditary algebras, exceptional sequences, recollements, gentle algebras, derived equivalences, derived composition series, derived Jordan-H{\"o}lder property.
\end{keywords}

\maketitle

\tableofcontents
\section{Introduction}
Triangulated categories are used and studied in different areas of mathematics and theoretical physics -- algebraic geometry (for example, with applications to classical problems in birational geometry, see e.g.  \cite{BKR, HomMMP}), representation theory (with relations to cluster algebras, starting with \cite{BMRRT} and perverse sheaves \cite{BBD} used in the proof of the Kazhdan-Lusztig conjectures), algebraic topology, string theory (via Kontsevich's Homological Mirror Symmetry conjecture \cite{Kontsevich}), ... . In general, triangulated categories are rather complicated structures and therefore techniques allowing a decomposition into more accessible pieces are important. In this article, we will focus on decompositions of triangulated categories $\ct$ of the form
\begin{align} \label{E:Recoll}Ê\recoll{\ct'}{\ct}{\ct''}\end{align} 
 called \emph{recollements}, which have properties similar to short exact sequences, see the discussion below. We refer to \cite{BBD} and Section \ref{S:RecollementsETC} for the precise definition.
 
 Quasi-hereditary algebras form an important class of finite dimensional algebras with relations to Lie theory (in fact this was the original motivation \cite{Scott}) and also exceptional sequences in algebraic geometry (see e.g. \cite{HillePerling} and \cite{BuchweitzLeuschkeVdB}). The category of finitely generated modules over a quasi-hereditary algebra is an example of a highest weight category and conversely, every highest weight category with finitely many simple objects is of this form \cite[Theorem 3.6]{CPS2}. Highest weight categories are also discussed in Krause's article in this volume \cite{Krause}: in particular, the category of strict polynomial functors admits the structure of a highest weight category. Moreover, work of Dlab \& Ringel \cite{DR2}Ê shows that every finite dimensional algebra admits a `resolution' by a quasi-hereditary algebra. A generalisation of this result led to Iyama's proof of the finiteness of Auslander's representation dimension \cite{Iyama}. Examples of quasi-hereditary algebras include blocks of category $\mathcal{O}$ and Schur algebras (see e.g. the articles by Krause \& K{\"u}lshammer in this volume \cite{Krause, Kuelshammer}). It is well-known that quasi-hereditary algebras may be defined in terms of sequences of recollements of abelian categories, see \cite{ParshallScott} and also \cite{Krause}.
    
 Recollements of derived categories induce long exact sequences in K-groups (see e.g. \cite{NeemanYao}), Hochschild homology and cyclic homology, see \cite[Remark 3.2]{KellerCyclic} for the latter two. They also give rise to long exact sequences involving Hochschild cohomology groups of all the algebras in the recollement, see e.g. \cite[Corollary 3]{HanHoch}. Moreover, recollements of derived module categories allow to reduce the proof of classical homological conjectures to simpler and smaller algebras, see e.g. \cite{Happel93}. Also $t$-structures on $\ct'$ and $\ct''$ can be glued to a $t$-structure on $\ct$ -- in fact, this was one of the main motivations in \cite{BBD}, where recollements arising from stratifications of topological spaces where used to construct so called \emph{perverse $t$-structures} giving rise to \emph{perverse sheaves}. Summing up, recollements behave similar as short exact sequences. 

This view was the starting point for a recent series of articles by Angeleri H{\"u}gel, K{\"o}nig, Liu (and Yang), see \cite{AKL, AKLJH, AKLY}, where one can find some historical background and also Yang's ICRA talk \cite{Yang}. We give a brief account of part of this work here.

In the presence of a notion of short exact sequence one can define simple objects as those which do not appear as middle terms of non-trivial short exact sequences. We call these objects \emph{triangulated simple}.
In analogy with short exact sequences of modules over rings, iteration (i.e. taking recollements of the outer terms $\ct'$ and $\ct''$ in \eqref{E:Recoll} and recollements of their outer terms and so forth until we reach \emph{triangulated simple} categories) leads to the notion of  \emph{(triangulated) composition series} (sometimes also called \emph{stratification} of triangulated categories). We call the triangulated simple categories appearing in this process \emph{triangulated composition factors}. Angeleri H{\"u}gel, K{\"o}nig \& Liu \cite[Example 6.1]{AKL} (see also Remark \ref{R:Kuznetsov} (ii)) show that the derived category $\cd=\cd(\Rep \mathcal{K})$ of representations of the Kronecker quiver $\mathcal{K}$ admits a triangulated composition series of infinite length, where all but one composition factor are not derived module categories. It is well-known that $\cd$ also has a composition series of length $2$ with factors derived categories of vector spaces. Because of this observation and `a lack of techniques to study the general triangulated categories appearing in triangulated composition series', they decided to restrict to triangulated composition series involving only derived module categories. We call these composition series \emph{derived composition series} and the corresponding simple factors \emph{derived simple}. We can now state the main questions of this article.

The first question asks for an analogue of the Jordan-H{\"o}lder property.

\begin{Question}[Jordan-H{\"o}lder] \label{Q:JordanHoelder}
When are derived composition series finite?
When are the derived composition factors unique up to reordering and equivalences? 
\end{Question}

\begin{Question}\label{Q:CharQH}
What does the existence of `special' derived composition series 
tell us about the original category?

In particular, Liu \& Yang \cite[Question 1.1]{derivedsimple} ask whether derived categories of quasi-hereditary algebras may be characterised as those derived categories admitting a derived composition series with all factors derived categories of vector spaces?
\end{Question}

\subsection*{On Question \ref{Q:JordanHoelder}}
Question \ref{Q:JordanHoelder} is known to have a positive answer for piecewise hereditary algebras \cite{AKLJH}, blocks of finite group algebras  \cite{LYBlocks}, Vossieck's derived discrete algebras \cite{Qin}, finite dimensional algebras with at most two simples \cite{derivedsimple}, commutative algebras and semisimple algebras.

It is known that the answer is negative in general, for non-uniqueness see \cite{ChenXi} (this involves non-artinian rings) and \cite{AKLY} (for counterexamples involving algebras of infinite global dimension). Moreover, it is not too surprising that the length of a derived composition series need not be finite, see e.g. \cite[Example 6.2]{AKL}. Question \ref{Q:JordanHoelder} was open for quasi-hereditary algebras and due to the existence of full exceptional sequences with good properties (given for example by the standard modules) it seems that there was some hope for a positive answer in this case. Building on work of Liu \& Yang \cite{derivedsimple}, we show that the derived category of the algebra $A'_2:=kQ_2/I'_2$ given by
\begin{align}
\begin{split}
\label{E:CJHalgebra}
\begin{tikzpicture}[description/.style={fill=white,inner sep=2pt}]
    \matrix (n) [matrix of math nodes, row sep=1em, ampersand replacement=\&,
                 column sep=1.3em, text height=1.5ex, text depth=0.25ex,
                 inner sep=0pt, nodes={inner xsep=0.3333em, inner
ysep=0.3333em}]
    {  \&\& 2 \\
       Q_2:= \& 3 \& \& 1   \\ 
    };
\path[-> ] (n-2-2) edge [bend left=20] node[ scale=0.75, fill=white] [midway] {$\gamma$} 
(n-2-4);
\path[-> ] (n-2-2) edge [bend right=20] node[ scale=0.75, fill=white] [midway] {$\beta$} 
(n-2-4);
\path[-> ] (n-2-4) edge node[ scale=0.75, xshift=9pt, yshift=3pt] [midway] {$\alpha_2$} 
(n-1-3);
\path[-> ] (n-1-3) edge node[ scale=0.75, xshift=-9pt, yshift=3pt] [midway] {$\alpha_1$} 
(n-2-2);
\draw (n-2-1) node[yshift=-29pt, xshift=43pt] {$I'_{2}:=( \alpha_{2} \beta, \gamma \alpha_{1})$};
\end{tikzpicture}
\end{split}
\end{align}
has at least two derived composition series of different length. Since $A'_2$ has global dimension $2$ it is quasi-hereditary \cite[Theorem 2]{DR}. Moreover, since $A'_2$ is gentle (see Definition \ref{D:Gentle}) its derived category is of tame representation type -- indeed, in this case the repetitive algebra is special biserial \cite{Ringel, Schroer}. Finally, all algebras appearing in our derived composition series have finite global dimension and all involved recollements are induced by idempotents. In particular, compared to the examples in \cite{ChenXi} and \cite{AKLY}, our examples have a quite different flavour.

We use this example as a starting point to construct quasi-hereditary algebras with an arbitrary number of composition series of different length (see Proposition \ref{P:MainJH}).

\begin{rem}
\begin{itemize}
\item[(i)] It follows from work of Dlab \& Ringel \cite{DR2} that every finite dimensional algebra $B$ can be written as $B=eAe$ where $A$ is quasi-hereditary and $e \in A$ is an idempotent. From this perspective the failure of the derived JH-property does not seem to be too surprising -- indeed in our example all recollements are induced by idempotents.
\item[(ii)] From a `practical' perspective the failure of the derived JH-property for quasi-hereditary algebras is not a problem -- it is well-known (see Lemma \ref{C:DerCompSeriesQH}) that there always exists a derived composition series with factors $\cd^b(k)$, which is convenient for inductively computing invariants. Continuing this line of thought JH-properties would be most useful in situations where there are no simple composition series known. In these situations, before computing invariants, it might be good to know whether there are `simpler' composition series and a JH-property would tell us that we cannot do better.
\end{itemize}
\end{rem}

\subsection*{On Question \ref{Q:CharQH}}

We first remark that it is well-known that \emph{triangulated} composition series with composition factors $\cd^b(k)$ correspond to full exceptional sequences and vice versa, see Lemma \ref{L:ExcSeqCompSeries}. For derived categories of finite dimensional algebras these composition series will in general involve triangulated categories which are not derived module categories, see e.g. Example \ref{E:PropTriaCompSeries}. For derived categories of quasi-hereditary algebras there is always a (special) derived composition series with factors $\cd^b(k)$, see Corollary \ref{C:DerCompSeriesQH} -- namely, all involved recollements are given by primitive idempotents. Question \ref{Q:CharQH} asks about a converse. Liu \& Yang \cite{derivedsimple} showed that the derived categories of a finite dimensional algebra with at most two simple modules admits a derived composition series with factors $\cd^b(k)$ if and only if it is derived equivalent to a quasi-hereditary algebra. We show that this statement fails for algebras with more than two simples. More precisely, the factor algebra \begin{align}\label{E:A2}A_2:=A'_2/(\alpha_1 \alpha_2)\end{align} (see \eqref{E:CJHalgebra}) admits a derived composition series with factors $\cd^b(k)$ (Lemma \ref{L:Strat}), is not quasi-hereditary (see Lemma \ref{L:Notquasi}) and is the unique algebra in its derived equivalence class (up to Morita equivalence), see Corollary \ref{C:derivedisolated}. The key step is to show the following proposition which has further consequences and also relates to other work, see the remarks below.

\begin{prop}\label{P:Notequiv}
The algebra $A_2$ defined in \eqref{E:A2} is not derived equivalent to $A_{1}:=kQ_{1}/I_{1}$ where
\[
\begin{tikzpicture}[description/.style={fill=white,inner sep=2pt}]
    \matrix (n) [matrix of math nodes, row sep=1em, ampersand replacement=\&,
                 column sep=1.3em, text height=1.5ex, text depth=0.25ex,
                 inner sep=0pt, nodes={inner xsep=0.3333em, inner
ysep=0.3333em}]
    { Q_1:= \& 1 \& \& 2 \& \& 3   \\ 
    };
\path[-> ] (n-1-2) edge [bend left=20] node[ scale=0.75, fill=white] [midway] {$a$} 
(n-1-4);
\path[-> ] (n-1-2) edge [bend right=20] node[ scale=0.75, fill=white] [midway] {$b$} 
(n-1-4);
\path[-> ] (n-1-4) edge [bend left=20] node[ scale=0.75, fill=white] [midway] {$a$} 
(n-1-6);
\path[-> ] (n-1-4) edge [bend right=20] node[ scale=0.75, fill=white] [midway] {$b$} 
(n-1-6);

\draw (n-1-1) node[yshift=-29pt, xshift=17pt] {$I_{1}:=(a^2, b^2).$};
\end{tikzpicture}
\]


\end{prop}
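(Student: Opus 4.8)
\emph{Proof plan.} First one checks --- routinely against Definition~\ref{D:Gentle} --- that $A_{1}$ and $A_{2}$ are both gentle algebras, so the whole question lives inside the derived classification of gentle algebras. The crucial point is that the \emph{cheap} derived invariants of $A_{1}$ and $A_{2}$ all coincide, so a complete invariant is genuinely needed. Indeed: both have Coxeter polynomial $(t-1)^{2}(t+1)$ (and, more generally, the usual numerical derived invariants agree); both are Koszul, and a computation from the Koszul bimodule resolution gives $HH^{0}=k$, $HH^{1}=k^{2}$ and $HH^{i}=0$ for $i\geq 2$ in either case; and the Avella-Alaminos--Gei\ss{} algorithm yields $\phi_{A_{1}}=\phi_{A_{2}}$ --- in each case there is one $\phi$-orbit, the permitted threads all have length $2$, and the total forbidden-thread length is $4$ (for $A_{2}$ one must include the trivial forbidden thread forced at vertex $2$ of $Q_{2}$). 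So none of these separates $A_{1}$ from $A_{2}$, and the plan is to use the geometric model instead.

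Recall the geometric model for the derived category of a gentle algebra (Opper--Plamondon--Schroll; Lekili--Polishchuk): to a gentle algebra $A$ one attaches a graded marked surface $(\Sigma_{A},M_{A},\eta_{A})$ --- a compact oriented surface with non-empty boundary, a finite set of marked points on the boundary, and a line field $\eta_{A}$ --- in such a way that $D^{b}(A)\simeq D^{b}(B)$ if and only if the graded marked surfaces are equivalent, i.e.\ related by an orientation-preserving diffeomorphism taking marked points to marked points and $\eta_{A}$ to a line field homotopic to $\eta_{B}$. The underlying marked surface $(\Sigma_{A},M_{A})$ is precisely what the Avella-Alaminos--Gei\ss{} invariant encodes; hence from $\phi_{A_{1}}=\phi_{A_{2}}$ together with $|Q_{0}|=3$, $|Q_{1}|=4$ in both cases one concludes that the marked surfaces of $A_{1}$ and $A_{2}$ agree: one finds (from the Euler characteristic $\chi(\Sigma_{A_{i}})=|Q_{0}|-|Q_{1}|=-1$ and the single boundary circle) that each $\Sigma_{A_{i}}$ has genus $1$ and one boundary component, carrying the same configuration of marked points --- consistently with the fact that each $A_{i}$ is a string algebra with bands, hence not derived discrete and so not a disk. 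Therefore $A_{1}$ and $A_{2}$ are derived equivalent if and only if the line fields $\eta_{A_{1}}$ and $\eta_{A_{2}}$ on this fixed surface lie in the same orbit of its mapping class group.

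It then remains to compute the two line fields and show they are inequivalent. The line field is reconstructed combinatorially from $(Q,I)$, and a line field is completely encoded by its winding numbers along closed curves. The winding number along the boundary is determined by the $\phi$-pair and therefore is the same for $A_{1}$ and $A_{2}$, so the content lies in the winding numbers along curves generating $H_{1}(\Sigma)\cong\mathbb{Z}^{2}$. I would, for each algebra, choose two bands of the string algebra spanning $H_{1}$ --- for $A_{1}$ the two ``bigon'' bands coming from $a_{1}^{-1}b_{1}$ and $a_{2}^{-1}b_{2}$, and for $A_{2}$ the band coming from $\beta^{-1}\gamma$ together with a band representing the other generator --- compute $w_{\eta}$ along each (along a band, the number of arrows traversed forwards minus the number traversed backwards, with the corrections dictated by the relations one crosses), and then read off the mapping-class-group invariant of a line field on the once-punctured torus: the boundary winding number together with the greatest common divisor of the two winding numbers just computed. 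Showing that this invariant differs for $A_{1}$ and $A_{2}$ gives $D^{b}(A_{1})\not\simeq D^{b}(A_{2})$, as claimed.

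The hard part is exactly this last computation: one must correctly extract the graded surface, and in particular the line field --- where the orientation conventions and the way the relations contribute to winding numbers are delicate --- from each presentation, and then pin down which invariant of a line field on $\Sigma_{1,1}$ actually separates $\eta_{A_{1}}$ from $\eta_{A_{2}}$ (the boundary winding numbers coinciding, it has to be the interior data). A way to avoid rebuilding the surfaces by hand is to appeal to the explicit derived-equivalence classification of small gentle algebras due to Bobi\'nski--Malicki, which already puts $A_{1}$ and $A_{2}$ into different derived-equivalence classes; this is also the route that connects the statement to their conjecture.
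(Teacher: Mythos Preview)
Your approach is genuinely different from the paper's. The paper argues by contradiction through Happel's equivalence $\cd^b(A_1)\simeq \widehat{A_1}\text{-}\ul{\mod}$: assuming a derived equivalence, it first uses the AG characteristic-component machinery (Corollary~\ref{C:Key}) to pin one summand $T_1$ of the tilting object to the string module $1\xrightarrow{a}2\xrightarrow{b}3$, and then shows by a case-by-case string-combinatorial analysis over the repetitive algebra $\widehat{A_1}$ that no indecomposable $T'$ can make $\ul{\End}_{\widehat{A_1}}(T_1\oplus T')$ isomorphic to the two-cycle algebra $k(\bullet\rightleftarrows\bullet)/\rad^2$, which the shape of $A_2$ would force. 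Your route via the graded-surface model (Opper--Plamondon--Schroll, Lekili--Polishchuk, in the spirit of Amiot~\cite{Amiot}) is the one that ultimately settled the full Bobi\'nski--Malicki conjecture and is more conceptual once the machinery is in place; the paper's method is entirely hands-on and needs nothing beyond string combinatorics for special biserial algebras, at the price of being harder to scale.

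That said, what you have submitted is a plan, not a proof. You correctly locate the decisive step --- the winding-number computation and the mapping-class-group invariant of the line field on the once-bordered torus --- but you do not carry it out; the sentence ``showing that this invariant differs\ldots'' is exactly the missing content. Your fallback of citing the Bobi\'nski--Malicki classification is circular here: in \cite{BM} the separation of $A_1=\Lambda'_0(1,0)$ from $A_2=\Lambda_0(2,1)$ is part of their \emph{conjecture}, not a theorem (see Conjecture~\ref{C:BobMali} and Remark~\ref{R:BM}), and the later proof in \cite{Bobinski} rests precisely on the surface/line-field computation you are deferring. To complete your argument you must actually produce the two line fields (equivalently, compute the winding numbers along your chosen bands with the correct sign conventions) and verify that the resulting invariant on $\Sigma_{1,1}$ distinguishes them.
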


\begin{rem} There seems to be no way of distinguishing the derived categories of $A_1$ and $A_2$ by known derived invariants.
Indeed, Euler forms are derived invariant by \cite[Proposition, p. 101]{Happelbook}. The matrices of the Euler forms of $A_1$ and $A_2$ in the bases given by the simple modules are 
\begin{align}
M_1=\begin{pmatrix} 1 & -2 & 2 \\ 0 & 1 & -2  \\ 0 & 0 & 1 \end{pmatrix}  \quad \text{     and  } \quad  M_2=\begin{pmatrix} 0 & -1 & 1 \\ 1 & 1 & -1 \\ -1 & 1 & 0 \end{pmatrix}. 
\end{align}
The corresponding integral bilinear forms are equivalent\footnote{This was observed by Ladkani  \cite{LadkaniNOTE}.} since $M_2=B^{\rm tr} M_1 B$ for
\begin{align}
B=\begin{pmatrix} -1 & 1 & 1 \\ -1 & 0 & 2 \\ 0 & 0 & 1 \end{pmatrix}. 
\end{align}

Moreover, Ladkani \cite{Ladkani} shows that the dimensions of the Hochschild cohomology groups of gentle algebras are completely determined by the corresponding derived invariants of Avella-Alaminos \& Gei{\ss} \cite{AG}. For both $A_1$ and $A_2$ this invariant can be computed to be $[2, 4]$.
 \end{rem}

\begin{rem}
The three algebras $A_1$, $A_2$ and $A'_2$, which play a key role in this article form a complete set of representatives of the derived equivalence classes for gentle algebras with three arrows and four vertices, see Corollary \ref{C:DerivedEq}.
\end{rem}

We point out some consequences and related work.

\smallskip

\noindent
$(a)$ Proposition \ref{P:Notequiv} is part of a conjecture by Bobi{\'n}ski \& Malicki \cite{BM} ($A_1=\Lambda'_0(1, 0)$ and $A_2=\Lambda_0(2, 1)$ in their notation, see also the paragraph after Lemma \ref{L:Kickout}). We also show that Proposition \ref{P:Notequiv} allows to distinguish the derived categories of a whole family of finite dimensional algebras confirming further parts of this conjecture, see Corollaries \ref{C:RedBobMali} \& \ref{C:Specialcase}. Recently, Amiot confirmed other cases for gentle algebras arising from a torus with one boundary component, see \cite{Amiot}. Upon receiving a preliminary version of this article Grzegorz Bobi{\'n}ski kindly informed us that using an extension of Amiot's techniques, he is able to establish the conjecture in all cases, see \cite{Bobinski}. Our alternative approach might nevertheless be useful to understand derived categories of gentle algebras, see also Remark \ref{R:Extend}.

 \noindent $(b)$ It follows from Avella-Alaminos \& Gei{\ss}' combinatorially defined derived invariants for gentle algebras \cite{AG} and considerations on the Euler form, that the derived equivalence class of $A_{1}$ contains at most $A_{2}$ (up to Morita-equivalence). 
In combination with Proposition \ref{P:Notequiv} this implies that $A_{1}$ and $A_{2}$ are `derived unique' algebras, i.e.~algebras for which the notions of Morita and derived equivalence coincide, see Definition \ref{D:derivedisolated} and Corollary \ref{C:derivedisolated} for details.

\noindent $(c)$ The algebra $A_1$ appears in several different places in the literature. We already mentioned \cite{BM}. Moreover, Burban \& Drozd show that $A_1$ is derived equivalent to coherent sheaves over a certain non-commutative irreducible nodal cubic curve \cite{BurbanDrozd} -- Burban also conjectured Corollary \ref{C:derivedisolated} for $A_1$. The algebra $A_1$ also appears in work of Seidel \cite[Section 3]{Seidel} in relation with a
Fukaya category of a certain Lefschetz pencil and in work of Kuznetsov \cite{Kuznetsov}\footnote{We thank Nathan Broomhead for giving us this reference.}
 on a geometric counterexample to the JH-property for triangulated composition series, see Remark \ref{R:Kuznetsov} (ii). Finally, Orlov \cite[Section 3.1.]{OrlovGeom} points out that $A_1$ corresponds to the Ising $3$-point function and is related to a Landau-Ginzburg model. Moreover, he shows that its derived category may be realized as a thick subcategory of a strong exceptional collection of vector bundles on a three dimensional smooth projective variety\footnote{We thank Theo Raedschelders for pointing out this reference.}.

\subsection*{Structure}
Section \ref{S:RecollementsETC} contains well-known background material: on recollements (and their relation to admissible subcategories), which can be arranged into triangulated composition series leading to the classical notion of exceptional sequences (in particular, we consider the case of derived categories of quasi-hereditary algebras with the exceptional sequences of standard and costandard modules). We also provide examples showing that complete exceptional sequences need not be full and that full exceptional sequences don't give \emph{derived} composition series in general.

We explain our constructions of quasi-hereditary algebras with derived composition series of different length in Section \ref{S:JH}. This gives a negative answer to Question \ref{Q:JordanHoelder}. The results of this section are not needed in the rest of the text.

In Section \ref{S:BobMali}, we use Proposition \ref{P:Notequiv}, which we prove in the final Section \ref{S:Proof}, to describe the derived equivalence classes of gentle algebras with three vertices and four arrows. This implies that the algebra $A_2$ is not derived equivalent to a quasi-hereditary algebra, which leads to a negative answer to Liu \& Yang's Question \ref{Q:CharQH} in Section \ref{S:LY}. In Section \ref{S:BobMali}, we also show how to reduce the conjecture of Bobi{\'n}ski \& Malicki to `algebras with full relations' and apply this reduction to obtain further parts of this conjecture from Proposition \ref{P:Notequiv}. We include some background material on the work of Avella-Alaminos \& Gei{\ss} \cite{AG}, which is a key ingredient both in our reduction argument and in the  proof of Proposition \ref{P:Notequiv}, see subsection `Derived equivalences and the AG-invariant'. 

\subsection*{Notation}

Throughout, let $k$ be an algebraically closed field. All modules are left modules. For a $k$-algebra $A$, we denote the derived category of left $A$-modules $\cd(A-\Mod)$ by $\cd(A)$ and the bounded derived category of finitely generated left $A$-modules $\cd^b(A-\mod)$ by $\cd^b(A)$.
For a set of objects $\cs$ in a triangulated category $\ct$, we write the thick subcategory generated by $\cs$ as $\thick(\cs)$. 
We read elements in the path algebra $kQ$ of a quiver $Q$ from right to left. 

\section{Preliminaries: recollements, composition series and exceptional sequences} \label{S:RecollementsETC}

The following notion is classical, see e.g. \cite{Bondal90}.

\begin{defn} \label{D:adm}
Let $\ct$ be a triangulated category. A full triangulated subcategory $\ca \subseteq \ct$ is called \emph{admissible} if the natural inclusion admits both a left and a right adjoint functor. In particular, $\ca$ is \emph{thick}, i.e. it is closed under taking direct summands, see e.g. \cite[Proposition 1.6]{BondalKap}.
\end{defn}

For a subset $\cs$ of a triangulated category $\ct$, we define the \emph{right orthogonal subcategory} $\cs^\perp:=\{ÊT \in \ct \mid \Hom_\ct(S, T[i])=0 \text{ for all }ÊS \in \cs \, \text{Êand all }Êi \in \ZZ\}$, which is a triangulated subcategory of $\ct$. The \emph{left orthogonal subcategory} $^\perp\!\cs$ is defined dually.

\begin{rem}
For an admissible subcategory $\ca$ the right orthogonal $\ca^\perp$ is left admissible (i.e. the natural inclusion has a left adjoint) but need not be right admissible in general, see e.g. \cite[Example 3.4.]{AKLY}. Dually, $^\perp\!\!\ca$ is right admissible but not left admissible in general. This is closely related to the notion of semi-orthogonal decompositions, see for example \cite[p.3, conventions on recollements]{HKPrep}.
\end{rem}

For an admissible subcategory $\ca$, the corresponding quotient $\ct \to \ct/\ca$ has good properties (see e.g. \cite[Section 1.4.4]{BBD} and also  \cite[Section 9]{Neeman99}) leading to the notion of a recollement.

\begin{prop}\label{P:Recollfromadm}
Let $\ca \subseteq \ct$ be admissible. Then the following statements hold.
\begin{itemize}
\item[(a)] The canonical triangulated quotient functor $j^*\colon \ct \to \ct/\ca$ admits both a left adjoint $j_!$ and a right adjoint $j_*$. This gives rise to a \emph{recollement}, i.e a sequence
\begin{align} \label{E:RecollDef}
\begin{xy}
\SelectTips{cm}{}
\xymatrix{\ca \ar[rr]|{i_*=i_!}&&\ct\ar[rr]|{j^!=j^*}\ar@/^15pt/[ll]^{i^!}\ar@/_15pt/[ll]_{i^*}&&\ct/\ca \ar@/^15pt/[ll]^{j_*}\ar@/_15pt/[ll]_{j_!}}
\end{xy}
\end{align}
where $i_*$ is the canonical inclusion with left adjoint $i^*$ and right adjoint $i^!$. 

\item[(b)] Conversely, let $j^*\colon \ct \to \cq$ be a triangulated quotient functor (i.e. $j^*$ induces and equivalence $\ct/\ker j^* \to \cq$) with left adjoint $j_!$ and right adjoint $j_*$. Then $\ker j^* \subseteq \ct$ is admissible.

\item[(c)] The right adjoint $j_*$ induces a triangle equivalence $j_* \colon \ct/\ca \rightarrow \ca^\perp$. Dually, the left ajoint $j_!$ yields a triangle equivalence $j_! \colon \ct/\ca \rightarrow ^\perp \! \!\!\ca$. In particular, $j_*$ and $j_!$ are fully faithful.
\end{itemize}
\end{prop}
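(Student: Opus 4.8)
The plan is to obtain the whole recollement from two standard ingredients: the \emph{Bousfield localization triangles} attached to the adjoints $i^*$ and $i^!$, and the classical identification of the Verdier quotient $\ct/\ca$ with the two orthogonal subcategories (this is where the ``good properties'' of $\ct\to\ct/\ca$ alluded to above, cf.\ \cite[Section 1.4.4]{BBD} and \cite[Section 9]{Neeman99}, are used). For the first ingredient, given $T\in\ct$ I would complete the counit $i_*i^!T\to T$ and the unit $T\to i_*i^*T$ to triangles
\[
i_*i^!T\to T\to LT\to(i_*i^!T)[1]
\qquad\text{and}\qquad
RT\to T\to i_*i^*T\to RT[1],
\]
functorially in $T$. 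Using that $i_*$ is fully faithful (so $i^!i_*\cong\id\cong i^*i_*$) and the adjunctions $i^*\dashv i_*\dashv i^!$, a short diagram chase in the long exact $\Hom$-sequences shows that $LT\in\ca^\perp$, that $RT\in{}^\perp\!\ca$, and that $L$ (resp.\ $R$) is a left (resp.\ right) adjoint of the inclusion $\ca^\perp\hookrightarrow\ct$ (resp.\ ${}^\perp\!\ca\hookrightarrow\ct$).

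Next I would prove that the composite $\ca^\perp\hookrightarrow\ct\xrightarrow{j^*}\ct/\ca$ is an equivalence (and, dually, so is ${}^\perp\!\ca\hookrightarrow\ct\xrightarrow{j^*}\ct/\ca$). Full faithfulness: a morphism $j^*X\to j^*Y$ with $X,Y\in\ca^\perp$ is represented by a roof $X\xleftarrow{s}Z\xrightarrow{f}Y$ with $\Cone(s)\in\ca$; applying $\Hom_\ct(-,Y)$ to the triangle $Z\xrightarrow{s}X\to\Cone(s)\to Z[1]$ and using that $\Hom_\ct(A,Y[i])=0$ for $A\in\ca$, $Y\in\ca^\perp$, $i\in\ZZ$, forces $s^*\colon\Hom_\ct(X,Y)\to\Hom_\ct(Z,Y)$ to be bijective, so the roof equals a unique genuine morphism; hence $\Hom_{\ct/\ca}(X,Y)=\Hom_\ct(X,Y)$ and, in particular, $\ct/\ca$ has small $\Hom$-sets. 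Essential surjectivity is immediate from the first step, since each $T$ is isomorphic in $\ct/\ca$ to $LT\in\ca^\perp$ because $i_*i^!T\in\ca$ becomes zero there.

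With these in hand, part (c) is obtained by \emph{defining} $j_*$ (resp.\ $j_!$) to be the inverse of the equivalence of the previous paragraph followed by $\ca^\perp\hookrightarrow\ct$ (resp.\ ${}^\perp\!\ca\hookrightarrow\ct$); these are then fully faithful. For part (a), the adjunctions $j_!\dashv j^*\dashv j_*$ follow formally by chaining the adjunctions of the first step with these equivalences (using $j^*L\cong j^*\cong j^*R$), and together with $i^*\dashv i_*\dashv i^!$ this produces the six functors of \eqref{E:RecollDef}; the remaining recollement axioms hold because $i_*(=i_!)$ is fully faithful by hypothesis, $j^*i_*=0$ since $\ca=\ker j^*$, and the two recollement triangles are, after the identifications $j_*j^*T\cong LT$ and $j_!j^*T\cong RT$, exactly the triangles of the first step. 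For the converse (b): given a triangulated quotient functor $j^*$ with $j_!\dashv j^*\dashv j_*$, put $\ca:=\ker j^*$; recalling that an adjoint of a triangulated quotient functor is fully faithful (so $j^*j_*\cong\id\cong j^*j_!$), applying $j^*$ shows that the fibre of the unit $X\to j_*j^*X$ and the cone of the counit $j_!j^*X\to X$ both lie in $\ca$, and testing the corresponding triangles against an arbitrary $A\in\ca$ with $\Hom_\ct(A,-)$ and $\Hom_\ct(-,A)$ (using $\Hom_\cq(j^*A,-)=0=\Hom_\cq(-,j^*A)$) exhibits these objects as the values of a right and a left adjoint of $\ca\hookrightarrow\ct$; hence $\ca$ is admissible.

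The only genuinely non-formal step is the identification $\ct/\ca\simeq\ca^\perp$ (and ${}^\perp\!\ca$): that quotienting by an (even one-sidedly) admissible subcategory is computed by an orthogonal complement. This is where the triangulated structure and the calculus of fractions actually enter, and it is also what disposes of the set-theoretic subtlety; everything else is bookkeeping with octahedra and adjunction (co)units. The one point requiring a little care is to check that the triangles manufactured in the first step become, under the identifications of the second step, the recollement triangles \emph{with the prescribed functors} $j_*j^*$ and $j_!j^*$, rather than merely abstractly isomorphic ones.
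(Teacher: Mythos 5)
Your argument is correct and takes essentially the same route as the paper: the paper cites \cite[Propositions 9.1.16 and 9.1.18]{Neeman99} for parts (a) and (b) and deduces (c) from the equivalence $\ca^\perp \hookrightarrow \ct \to \ct/\ca$ together with uniqueness of adjoints, and what you write out (the Bousfield localization triangles for $i^*$, $i^!$, the roof computation identifying $\ct/\ca$ with $\ca^\perp$ and ${}^\perp\!\ca$, and the formal chaining of adjunctions) is precisely the standard argument behind those citations. The only difference is that you prove in full what the paper delegates to Neeman's book.
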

\begin{proof}
Part (a) \& (b)  are  \cite[Proposition 9.1.18]{Neeman99} and its dual. To see part (c), we note that the composition $\gamma$ of the  inclusion 
$ \ca^\perp \subseteq \ct$ followed by the natural projection $j^*\colon \ct \to \ct/\ca$ is an equivalence, see e.g.  \cite[Proposition 9.1.16]{Neeman99}. Using the adjunction $(j^*, j_*)$ one can check that $j_*\colon \ct/\ca \to \ca^\perp$ is well-defined and right adjoint to the equivalence $\gamma\colon \ca^\perp \to \ct/\ca$. Since right adjoints are unique $j_*$ has to be a quasi-inverse to $\gamma$ completing the proof.
\end{proof}

\begin{rem}
Parts (a) \& (b) show that  recollements are completely determined by fixing `one half'. 
\end{rem}

\begin{rem}\label{R:Standardseq}
Consider a recollement as in \eqref{E:RecollDef}.
It is well-known (see e.g. \cite[p. 319]{Neeman99}) that every object $X$ of $\ct$ fits into two distinguished triangles
\[i_!i^!X \rightarrow X \rightarrow j_*j^*X \rightarrow i_!i^!X[1] \, \,
\text{ and } \, \,  j_!j^!X \rightarrow X \rightarrow
i_*i^*X \rightarrow j_!j^!X[1],\] where the morphisms starting from and
ending at $X$ are the units and counits of the adjunctions.
\end{rem}

\begin{defn}
An object $E$ in a $k$-linear triangulated category is called \emph{exceptional}, if 
\[\bigoplus_{i \in \ZZ} \Hom_\ct(E, E[i])= k. \]
\end{defn}

\noindent
By Proposition \ref{P:Recollfromadm}, the following well-known example yields a recollement starting from the left hand side. This is used in Lemma \ref{L:ExcSeqCompSeries} to construct recollements from exceptional sequences.

\begin{ex} \label{E:admissible}
Let $\ct=\cd^b(A)$ for a finite dimensional $k$-algebra $A$ of finite global dimension. Let $E$ in $\ct$ be an exceptional object. Then the thick subcategory $\thick(E) \subseteq \ct$ generated by $E$ is admissible. Indeed the right adjoint is given by $\RHom_A(E, -) \overset{\mathsf L}{\otimes}_k E$ and the left adjoint is $\RHom_A(-, E)^* \overset{\mathsf L}{\otimes}_k E$, where $(-)^*=\Hom_k(-, k)$ denotes the $k$-duality. More generally, one can replace $\ct$ by a $k$-linear algebraic triangulated category, such that $\dim_k\bigoplus_{i \in \ZZ}Ê\Hom_\ct(X, Y[i]) < \infty$ for all $X, Y$ in $\ct$.
\end{ex}

In combination with  Proposition \ref{P:Recollfromadm}, the following well-known proposition gives examples for recollements starting from a fixed right hand side. 

\begin{prop}\label{P:AdjTriple}
Let $A$ be a finite dimensional $k$-algebra and $e \in A$ be an idempotent such that $eAe$ has finite global dimension. Then there is a triple of adjoint triangle functors
\begin{align}\label{E:AdjTriple}
\begin{xy}
\SelectTips{cm}{}
\xymatrix{\cd^b(A)\ar[rrrr]|{\begin{smallmatrix} j^!= \RHom_{A}(Ae, -) \\ = \\ j^*=eA\lten_{A} - \end{smallmatrix} }&&&&\cd^b(eAe)\ar@/_25pt/[llll]_{j_!=Ae \lten_{eAe} -}\ar@/^25pt/[llll]^{ j_* =\RHom_{eAe}(eA, - )}}
\end{xy},
\end{align}
i.e.~$(j_{!}, j^!)$ and $(j^*, j_{*})$ are adjoint pairs. Moreover,  $j^*=j^!$ is a triangulated quotient functor.
\end{prop}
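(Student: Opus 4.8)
The plan is to establish Proposition \ref{P:AdjTriple} in two stages: first produce the adjoint triple at the level of unbounded derived categories $\cd(A)$ and $\cd(eAe)$, and then check that all four functors restrict to the bounded, finitely generated subcategories under the finite global dimension hypothesis on $eAe$. For the first stage I would recall the standard recollement associated to an idempotent $e \in A$: writing $\bar A = A/AeA$, the bimodule $eA$ (viewed as an $(eAe,A)$-bimodule) and $Ae$ (an $(A,eAe)$-bimodule) give rise to functors $j^* = eA \lten_A - \colon \cd(A) \to \cd(eAe)$, with left adjoint $j_! = Ae \lten_{eAe} -$ and right adjoint $j_* = \RHom_{eAe}(eA, -)$. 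The adjunctions $(j_!, j^*)$ and $(j^*, j_*)$ are instances of the derived tensor–Hom adjunction for bimodules, so they hold with no hypothesis on $A$ or $e$. The identification $j^! = \RHom_A(Ae, -)$ with $j^*$ comes from the natural isomorphism $\RHom_A(Ae, M) \cong eA \lten_A M$, valid because $Ae$ is a projective $A$-module and $eA \cong \Hom_A(Ae, A)$ as $(eAe, A)$-bimodules; hence $j^*$ is simultaneously a left and a right adjoint of $j_!$. That $j^*$ is a triangulated quotient functor (i.e.\ induces an equivalence $\cd(A)/\ker j^* \xrightarrow{\sim} \cd(eAe)$) is classical and follows, for instance, from the fact that $j_!$ and $j_*$ are fully faithful — which in turn follows from $j^* j_! \cong \mathrm{id}$ (the unit $eA \lten_A Ae \cong eAe$ being an isomorphism) and the dual statement for $j_*$.

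Second stage: restrict to $\cd^b$ of finitely generated modules. The functor $j^* = eA \lten_A -$ obviously preserves boundedness and finite generation since $eA$ is a finitely generated projective right $A$-module, so it sends $\cd^b(A) = \cd^b(A\text{-}\mod)$ into $\cd^b(eAe)$. Likewise $j_! = Ae \lten_{eAe} -$ preserves $\cd^b$ of finitely generated modules since $Ae$ is finitely generated projective over $eAe$. The point where the hypothesis $\gldim(eAe) < \infty$ is used is the right adjoint $j_* = \RHom_{eAe}(eA, -)$: for this to land in $\cd^b(A)$ one needs $eA$ to be a perfect complex over $eAe$ (equivalently, to have finite projective dimension), which holds precisely because $eAe$ has finite global dimension — then $\RHom_{eAe}(eA, N)$ is a bounded complex of finitely generated $A$-modules for any $N \in \cd^b(eAe)$. (One could alternatively invoke Example \ref{E:admissible} / Proposition \ref{P:Recollfromadm}, since finite global dimension of $eAe$ is exactly what makes $\cd^b(eAe)$ compatible with the recollement language used there, but the direct perfectness argument is cleanest.) Once all four functors are known to preserve the relevant bounded subcategories, the adjunctions descend automatically by restriction, and $j^* = j^!$ remains a quotient functor on the bounded level because $\ker(j^*|_{\cd^b(A)}) = \cd^b_{\bar A}(A)$ and the induced functor on the Verdier quotient is still an equivalence.

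The main obstacle is the verification that $j_*$ preserves $\cd^b$ of finitely generated modules — equivalently, that $\RHom_{eAe}(eA, -)$ has bounded cohomology with finitely generated terms. This is exactly where $\gldim(eAe) < \infty$ enters and is not a formal consequence of the abstract adjoint triple; one must argue that $eA$, as an $eAe$-module, admits a finite resolution by finitely generated projectives, so that $\RHom_{eAe}(eA, N)$ is computed by a bounded complex. Everything else is either the standard bimodule tensor–Hom formalism or bookkeeping about which functors obviously preserve finiteness and boundedness, so I would devote the bulk of the written proof to this perfectness point and treat the adjunction isomorphisms and the quotient-functor assertion as citations to \cite{Neeman99} (cf.\ Proposition \ref{P:Recollfromadm}) or to the standard references on recollements from idempotents.
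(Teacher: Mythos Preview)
Your approach is correct in outline and genuinely different from the paper's. The paper begins at the abelian level with the adjoint triple $(Ae \otimes_{eAe} -,\ \Hom_A(Ae,-),\ \Hom_{eAe}(eA,-))$, derives it using $\gldim eAe < \infty$, and for the quotient assertion cites Gabriel's localisation theory to identify $A-\mod/(A/AeA)-\mod \simeq eAe-\mod$, then Miyachi's theorem to pass this to bounded derived categories. Your route---build the recollement on unbounded derived categories from tensor--Hom adjunction, then restrict to $\cd^b$ via perfectness arguments---avoids the abelian quotient entirely and deduces the quotient property from full faithfulness of $j_!$ (equivalently $j^*j_! \cong \mathrm{id}$). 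Both arguments are standard; yours is arguably more direct for readers fluent in unbounded derived categories, while the paper's route makes the abelian origin of the recollement explicit.

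One correction: you assert that $j_!$ preserves $\cd^b$ because ``$Ae$ is finitely generated projective over $eAe$''. This is false in general: take $A = \bigl(\begin{smallmatrix} k & k \\ 0 & k[x]/(x^2) \end{smallmatrix}\bigr)$ with $e$ the lower-right idempotent; then $Ae \cong k \oplus k[x]/(x^2)$ as a right $eAe$-module, and the summand $k$ is not projective. What \emph{is} true is that $Ae$ has finite projective dimension over $eAe$ once $\gldim eAe < \infty$, so the same perfectness argument you give for $j_*$ also covers $j_!$. The hypothesis is thus needed for both outer functors, not only for $j_*$.
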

\begin{proof}
Already on the abelian level, we have a triple of adjoint functors
\begin{align*}
\begin{xy}
\SelectTips{cm}{}
\xymatrix{A-\mod \ar[rrrr]|{\begin{smallmatrix} \Hom_{A}(Ae, -) \\ = \\ eA\ten_{A}- \end{smallmatrix} }&&&& eAe-\mod \ar@/_25pt/[llll]_{Ae\ten_{eAe} -}\ar@/^25pt/[llll]^{ \Hom_{eAe}(eA, - )}}
\end{xy},
\end{align*}
by the adjunction formula. Deriving this and using $\gldim eAe < \infty$ yields the adjoint triple \eqref{E:AdjTriple} above. To show that $j^!$ is a quotient functor one can proceed as follows. Using the dual of \cite[Proposition III.5]{Gabriel62} in combination with \cite[Theorem 8.4.4.]{DrozdKirichenko} shows that $\Hom_{A}(Ae, -)$ induces is an equivalence of abelian categories $A-\mod/(A/AeA)-\mod \to eAe-\mod$, see e.g. \cite[Proposition 5.9]{Miyachi}. In combination with \cite[Theorem 3.2]{Miyachi} this finishes the proof.\end{proof}
\begin{rem} \label{R:Recollidemp}
One can check that the kernel of $j^!$ is $\thick(A/AeA-\mod)$. So combining Proposition \ref{P:AdjTriple} with Proposition \ref{P:Recollfromadm} yields a recollement $(\thick(A/AeA-\mod), \cd^b(A), \cd^b(eAe))$.
\end{rem}


Viewing recollements as analogues of short exact sequences for triangulated categories, leads to the notions of triangulated simple categories and triangulated composition series, which we introduce below. The main results of this article deal with the special case of derived simple categories and derived composition series, see Definition \ref{D:Strati}. However, for examples from algebraic geometry and some general statements (e.g. Lemma \ref{L:ExcSeqCompSeries})  it is convenient to introduce this terminology.

\begin{defn}
A triangulated category $\ct$ is called \emph{triangulated simple} if there is no non-trivial recollement $(\ct', \ct, \ct'')$.
\end{defn} 

\begin{ex}\label{E:derivedsimple}
\begin{itemize}
\item[(a)] The bounded derived category of vector spaces $\cd^b(k)$ is triangulated simple. Indeed more generally triangulated categories which do not admit non-trivial thick subcategories are triangulated simple.
\item[(b)] Indecomposable Calabi-Yau categories $\cc$ (e.g. derived categories of connected Calabi-Yau varieties, cluster categories, singularity categories of isolated Gorenstein singularities) are triangulated simple. Indeed assume that there exists a non-trivial recollement, i.e. an admissible subcategory $\ca \subset \cn$. It follows from the triangles in Remark \ref{R:Standardseq} that $\cc$ is generated by $\ca$ and $\ca^{\perp}$. Using the Calabi-Yau property, we see that $\ca^\perp=^\perp\!\!\ca$ and therefore $\cc \cong \ca \oplus\ca^{\perp}$.
\end{itemize}
\end{ex}

\begin{defn}\label{D:triangcompseries}
Let $\ct$ be a triangulated category. A \emph{triangulated composition series} of $\ct$ is a binary tree constructed by iteratively taking  recollements. Starting with a recollement $(\ct_0, \ct, \ct_1)$ of $\ct$ and continuing with recollements of $\ct_0$ and $\ct_1$ and so forth until triangulated simple categories are reached.
\end{defn}

We refer to Example \ref{E:PropTriaCompSeries}, Lemma \ref{L:ExcSeqCompSeries} and Section \ref{S:JH} for examples.

\subsection*{Exceptional sequences and derived categories of quasi-hereditary algebras}

\begin{defn}
Let $\ct$ be a $k$-linear triangulated category. A sequence $(E_1, \ldots, E_n)$ of exceptional objects $E_i$ is called \emph{exceptional sequence} if 
\begin{align}
\Hom_\ct(E_j, E_i[s])=0 \text{  for all } \quad  j > i \quad \text{Êand all } \quad s \in \ZZ.
\end{align}
It is called \emph{full}Ê  if $\thick(E_1, \ldots, E_n)= \ct$ and \emph{complete}Ê  (or \emph{maximal}) if there exists no exceptional object $E$ in $\ct$ such that $(E_1, \ldots, E_{i-1}, E, E_{i}, \ldots, E_n)$ is an exceptional sequence, where $1 \leq i \leq n+1$.
\end{defn}

\begin{rem}\label{R:Kuznetsov}
\begin{itemize}
\item[(i)] It is well-known that full exceptional sequences are complete. We proceed by induction. Let $\ct=\thick(E)$ for an exceptional object $E$. Then $\ct \cong \cd^b(k)$ where $E$ is identified with $k$ - in particular, any object in $\ct$ is a direct sum of shifts of $E$. Therefore, this exceptional sequence cannot be extended. Assume that the statement is already shown for a full exceptional sequence of length at most $n-1$. Let $(E_1, \ldots, E_n)$ be a full exceptional sequence and assume that there is an exceptional sequence $(E_1, \ldots, E_{i-1}, E, E_{i}, \ldots, E_n)$, where $1 \leq i \leq n+1$. This yields the following equalities of subcategories
\begin{align*}
\thick(E_i, \ldots, E_n) = ^\perp\! \thick(E_1, \ldots, E_{i-1})=\thick(E, E_i, \ldots, E_n)
\end{align*}
see e.g. \cite[Lemma 6.1]{Bondal90}. By definition, $(E_i, \ldots, E_n)$ is a full exceptional sequence in this subcategory. By induction it is complete which contradicts the existence of the full exceptional sequence $(E, E_i, \ldots, E_n)$. This finishes the proof.
\item[(ii)] The converse fails already for the derived category of the algebra $A_1$ from Proposition \ref{P:Notequiv}, which has global dimension $2$ and hence is quasi-hereditary \cite[Theorem 2]{DR} (we will see later (Corollary \ref{C:DerCompSeriesQH}) that this implies that $\cd^b(A)$ admits a full exceptional sequence). Bondal (see e.g. \cite{Kuznetsov}) observed that the exceptional collection $(E)$ of length $1$ is complete, where $E=1 \xrightarrow{a} 2 \xrightarrow{b} 3$ is an exceptional $A_1$-module. To see this one can check that the Euler forms on $^\perp\!E$ and $E^\perp$ are anti-symmetric and therefore these categories don't contain exceptional objects. This is sometimes referred to as a `failure of Jordan-H{\"o}lder Theorem' for semi-orthogonal decompositions (or triangulated composition series in our language) and was used by Kuznetsov \cite{Kuznetsov} to construct new geometric counter-examples to the Jordan-H{\"o}lder property. 

For piecewise hereditary algebras (i.e. finite dimensional algebras which are derived equivalent to abelian categories of global dimension $1$) the notions of full and complete exceptional sequences coincide, see e.g. \cite{AKLJH} together with Lemma \ref{L:ExcSeqCompSeries} and also \cite{Crawley-Boevey} Êfor the special case of exceptional sequences of quiver representations. 
\end{itemize}
\end{rem}

It follows from Example \ref{E:admissible} \& Proposition \ref{P:Recollfromadm} that full exceptional sequences give rise to triangulated composition series with composition factors $\cd^b(k)$ and vice versa. This is summarized in the following well-known lemma.

\begin{lem}\label{L:ExcSeqCompSeries}
Let $\ct$ be a $k$-linear algebraic triangulated category, such that $\dim_k \bigoplus_{i \in \ZZ}Ê\Hom_\ct(X, Y[i]) < \infty$ for all $X, Y$ in $\ct$. Then every full exceptional sequences in $\ct$ gives rise to a triangulated composition series with factors $\cd^b(k)$. Conversely, every such composition series yields a full exceptional sequence.
\end{lem}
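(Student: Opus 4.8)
The plan is to go back and forth between two constructions, using that an exceptional object $E$ in $\ct$ generates an admissible thick subcategory $\thick(E)\cong \cd^b(k)$ (Example~\ref{E:admissible}) so that, via Proposition~\ref{P:Recollfromadm}, $\thick(E)\subseteq\ct$ yields a recollement of the shape $(\thick(E),\ct,\ct/\thick(E))$ with $\ct/\thick(E)\simeq \thick(E)^\perp$ (equivalently $\simeq {}^\perp\!\thick(E)$). The key algebraic fact that makes this iterate cleanly is the standard orthogonality bookkeeping for exceptional sequences, namely that for a full exceptional sequence $(E_1,\dots,E_n)$ one has $\thick(E_1,\dots,E_i)={}^\perp\!\thick(E_{i+1},\dots,E_n)$ and dually $\thick(E_{i+1},\dots,E_n)=\thick(E_1,\dots,E_i)^\perp$ inside $\ct$; this is already invoked in Remark~\ref{R:Kuznetsov}(i) (see \cite[Lemma 6.1]{Bondal90}) and will be used throughout.

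First I would prove the forward direction by induction on the length $n$ of the full exceptional sequence $(E_1,\dots,E_n)$ in $\ct$. For $n=1$ we have $\ct=\thick(E_1)\simeq\cd^b(k)$, which is triangulated simple by Example~\ref{E:derivedsimple}(a), so the (trivial) composition series consists of a single leaf $\cd^b(k)$. For the inductive step, set $\ca:=\thick(E_1)$; by Example~\ref{E:admissible} it is admissible in $\ct$, and Proposition~\ref{P:Recollfromadm}(a),(c) gives a recollement $(\ca,\ct,\ct/\ca)$ together with an equivalence $j_!\colon\ct/\ca\xrightarrow{\sim}{}^\perp\!\ca=\thick(E_2,\dots,E_n)$. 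Under this equivalence $(E_2,\dots,E_n)$ is a full exceptional sequence in $\ct/\ca$ of length $n-1$: it is exceptional and the $\Hom$-vanishing conditions are inherited since $j_!$ is fully faithful, and fullness holds because ${}^\perp\!\ca=\thick(E_2,\dots,E_n)$. Here one must note that $\ct/\ca$ is again an algebraic triangulated category with finite-dimensional total $\Hom$-spaces, so the inductive hypothesis applies; by induction it has a triangulated composition series with all factors $\cd^b(k)$, and splicing it below the recollement $(\ca,\ct,\ct/\ca)$ — whose left leaf $\ca\simeq\cd^b(k)$ is already triangulated simple — gives the desired series for $\ct$.

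For the converse, I would run an induction on the depth of a triangulated composition series of $\ct$ in which every leaf is $\cd^b(k)$. A series of depth $0$ means $\ct\simeq\cd^b(k)$, with full exceptional sequence $(k)$. Otherwise the series begins with a recollement $(\ct_0,\ct,\ct_1)$ where each of $\ct_0,\ct_1$ carries a (shallower) composition series with factors $\cd^b(k)$; by induction these furnish full exceptional sequences $(F_1,\dots,F_p)$ in $\ct_0$ and $(G_1,\dots,G_q)$ in $\ct_1$. Using the recollement's fully faithful functors — $i_*$ for the left term and $j_!$ for the right term (both fully faithful, the latter by Proposition~\ref{P:Recollfromadm}(c)) — I would transport these to objects $i_*F_1,\dots,i_*F_p,\,j_!G_1,\dots,j_!G_q$ in $\ct$. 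Fully faithfulness preserves exceptionality, and the recollement axioms give $\Hom_\ct(i_*F_a,j_!G_b[s])=\Hom_\ct(i^*j_!G_b,\,?)=0$ for all $s$ because $i^*j_!=0$; this is precisely the cross-vanishing needed to make the concatenated sequence exceptional in the correct order, and fullness of the concatenation follows from the fact that $\ct$ is generated by the images of $i_*$ and $j_!$ (cf.\ the triangles in Remark~\ref{R:Standardseq} together with $\thick(i_*\ct_0)=\ca$ and $\thick(j_!\ct_1)={}^\perp\!\ca$).

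The main obstacle I anticipate is not any single hard computation but the bookkeeping around orientation and generation: one must be careful that $i^*j_!=0$ (giving the Hom-vanishing in the right direction so that $i_*F_\bullet$ comes \emph{before} $j_!G_\bullet$, not after), that the subcategories $\thick(i_*\ct_0)$ and $\thick(j_!\ct_1)$ really generate $\ct$ and are mutually orthogonal in the semiorthogonal (not orthogonal) sense, and — in the forward direction — that the quotient $\ct/\ca$ genuinely falls within the class of categories to which the inductive hypothesis applies (algebraic, with finite-dimensional graded $\Hom$-spaces), which one checks using the equivalence $\ct/\ca\simeq{}^\perp\!\ca\subseteq\ct$. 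Since all of this is standard, I would keep the write-up to a sketch and cite \cite{Bondal90, BondalKap, Neeman99} for the orthogonality and recollement facts.
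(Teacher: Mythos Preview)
Your forward direction is essentially identical to the paper's: peel off $\thick(E_1)$ as an admissible subcategory, identify the quotient with ${}^\perp E_1=\thick(E_2,\dots,E_n)$ via \cite[Lemma~6.1]{Bondal90}, and induct.

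For the converse you take a slightly different route than the paper. The paper argues by peeling off a single copy of $\cd^b(k)$ from the top recollement (so it implicitly assumes, or reduces to, the case where one outer term of the top recollement is already $\cd^b(k)$), then iterates. Your argument is cleaner and more robust: you induct on the depth of the binary tree, obtain full exceptional sequences on \emph{both} outer terms $\ct_0$ and $\ct_1$, and concatenate via $i_*$ and $j_!$. This avoids any ambiguity about the shape of the top recollement and makes the generation/fullness step transparent via Remark~\ref{R:Standardseq}. One small slip: the cross-vanishing you need is $\Hom_\ct(j_!G_b,\,i_*F_a[s])=0$ (morphisms from the later block to the earlier one), not $\Hom_\ct(i_*F_a,\,j_!G_b[s])$ as you wrote; your adjunction computation $\Hom(j_!G_b,i_*F_a[s])\cong\Hom(i^*j_!G_b,F_a[s])=0$ using $i^*j_!=0$ is the correct one and matches the order you want, so this is a typo rather than a gap.
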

\begin{proof}
Let $(E_1, \ldots, E_n)$ be a full exceptional sequence. Example \ref{E:admissible} shows that $\thick(E_1)$ is an admissible subcategory. Using Proposition \ref{P:Recollfromadm} (a), we get a recollement $(\thick(E_1), \ct, \ct/\thick(E_1))$, where $\ct/\thick(E_1) \cong ^\perp\!\! E_1$ by Proposition \ref{P:Recollfromadm} (c) and further $^\perp\! E_1 = \thick(E_2, \ldots, E_n)$ by \cite[Lemma 6.1]{Bondal90}. By induction, we get a composition series with factors $\thick(E_i) \cong \cd^b(k)$ (see Example \ref{E:derivedsimple} (a)).

Conversely, assume that $\ct$ has a composition series with factors $\cd^b(k)$. In particular, we obtain a recollement of the form
$(\cd^b(k), \ct, \ct')$ or $(\ct'', \ct, \cd^b(k))$. In both cases the image $E$ of $k \in \cd^b(k)$ in $\ct$ is exceptional. Proposition  \ref{P:Recollfromadm} (c) shows that $\ct'$ respectively $\ct''$ identify with $E^\perp$ and $^\perp\!E$. By assumption these categories again admit recollements involving $\cd^b(k)$ as one of the factors. Iterating this process yields an exceptional sequence. The standard triangles associated with recollements (Remark \ref{R:Standardseq}) imply that this sequence is full. 
\end{proof}

We turn to an example which will be important in the sequel. 
Derived categories of quasi-hereditary algebras admit full exceptional sequences, for example given by standard modules. We start
with the definition of a quasi-hereditary algebra due to Scott \cite{Scott} (cf. \cite{derivedsimple}).

\begin{defn} \label{D:QH}
Let $A$ be a finite dimensional $k$-algebra and let $e \in A$ be an idempotent. The two-sided ideal $AeA$ is called \emph{heredity} if $eAe$ is a semi-simple algebra and $AeA$ is projective as a left $A$-module. The algebra $A$ is called \emph{quasi-hereditary} Êif there exists a chain of two-sided ideals
\begin{align} \label{E:chain}
0=J_0 \subseteq J_1 \subseteq J_2 \subseteq \ldots \subseteq J_m=A
\end{align}
 such that $J_i/J_{i-1}$ is a heredity ideal in $A/J_{i-1}$ for all $i=1, \ldots, m$. In particular, semi-simple algebras and all quotient algebras $A/J_i$ are quasi-hereditary.
\end{defn}

\begin{rem} \label{R:QH}
\begin{itemize}
\item[(a)] Let $A$ be a quasi-hereditary algebra. One can refine the chain \eqref{E:chain} in such a way that all heredity ideals $J_i/J_{i-1}$ are given by primitive idempotents. 
\item[(b)] Let $A$ be a finite dimensional $k$-algebra and let $e \in A$ be an idempotent. The canonical functor $\iota\colon \cd^b(A/AeA) \to \cd^b(A)$ is fully faithful, if $AeA$ is a projective left $A$-module, see \cite[Theorem 3.1(1)]{CPS}. In this situation, $\im \iota = \thick(A/AeA-\mod)$ is the kernel of the quotient functor $\Hom_A(Ae, - ) \colon \cd^b(A) \to \cd^b(eAe)$. If $eAe$ has finite global dimension, we obtain a recollement $(\cd^b(A/AeA), \cd^b(A), \cd^b(eAe))$ by Proposition \ref{P:AdjTriple}.  
\end{itemize}
\end{rem}

Combining Remarks \ref{R:QH} (a) and (b) with Definition \ref{D:QH} shows the following well-known lemma, see e.g. \cite[Corollary 3.7]{CPS2}.

\begin{lem} \label{L:RecollQH}
Let $A$ be a quasi-hereditary algebra. Then the primitive idempotents $e_1, \ldots, e_n \in A$ may be ordered such that
there are recollements $(\cd^b(A/A(e_1+\ldots+e_{i+1})A), \cd^b(A/A(e_1+\ldots+e_i)A), \cd^b(k))$. 
\end{lem}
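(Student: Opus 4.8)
The statement is Lemma~\ref{L:RecollQH}, and the strategy is a straightforward induction on the number $n$ of primitive idempotents, feeding Definition~\ref{D:QH} into Remark~\ref{R:QH}(b). The base case $n=1$ is trivial, since then $A$ is already semi-simple (indeed $A\cong k$ as $k$ is algebraically closed and $A$ is basic), so $\cd^b(A)\cong\cd^b(k)$ and there is nothing to order. For the inductive step, I would start from the chain $0=J_0\subseteq J_1\subseteq\cdots\subseteq J_m=A$ witnessing that $A$ is quasi-hereditary, and first invoke Remark~\ref{R:QH}(a) to refine it so that every heredity layer $J_i/J_{i-1}$ is generated by a single primitive idempotent. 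In particular $J_1=Ae_1A$ for a primitive idempotent $e_1$, where $e_1Ae_1\cong k$ is semi-simple (this is where `primitive' plus `algebraically closed' is used) and $Ae_1A$ is projective as a left $A$-module.

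\textbf{Key steps.} With $e:=e_1$ fixed, the heredity condition gives exactly the two hypotheses needed to apply Remark~\ref{R:QH}(b): $AeA$ is projective as a left $A$-module, so $\iota\colon\cd^b(A/AeA)\to\cd^b(A)$ is fully faithful with image $\thick(A/AeA\text-\mod)=\ker\big(\Hom_A(Ae,-)\colon\cd^b(A)\to\cd^b(eAe)\big)$; and $eAe\cong k$ has finite (in fact zero) global dimension, so Proposition~\ref{P:AdjTriple} (together with Proposition~\ref{P:Recollfromadm} and Remark~\ref{R:Recollidemp}) yields a recollement
\[
\big(\cd^b(A/AeA),\ \cd^b(A),\ \cd^b(eAe)\big),
\]
and $\cd^b(eAe)\cong\cd^b(k)$. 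This is precisely the $i=0$ instance of the asserted recollements, with $e_1$ the first idempotent in the ordering. Now $B:=A/Ae_1A=A/J_1$ is again quasi-hereditary by Definition~\ref{D:QH} (the quotient chain $J_i/J_1$ does the job), it has $n-1$ primitive idempotents, and these are the images $\bar e_2,\ldots,\bar e_n$ of the remaining primitive idempotents of $A$. By the induction hypothesis these can be ordered, say $\bar e_2,\ldots,\bar e_n$, so that for each $i$ there is a recollement
\[
\big(\cd^b(B/B(\bar e_2+\cdots+\bar e_{i+1})B),\ \cd^b(B/B(\bar e_2+\cdots+\bar e_i)B),\ \cd^b(k)\big).
\]
Lifting idempotents along the surjection $A\to B$ (and using $A(e_1+\cdots+e_i)A + Ae_1A = A(e_1+\cdots+e_i)A$ together with the standard identification $A/A(e_1+\cdots+e_i)A \cong B/B(\bar e_2+\cdots+\bar e_i)B$) these recollements are exactly the $i\ge 1$ cases of the claim, completing the induction.

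\textbf{Main obstacle.} Nothing here is deep; the only point that needs a little care is the bookkeeping in the inductive step: one must check that passing to the quotient $A/A(e_1+\cdots+e_i)A$ and then cutting out the next idempotent layer really does produce the recollement supplied by the induction hypothesis for $B=A/Ae_1A$, i.e.\ that the two quotients $A/A(e_1+\cdots+e_{i+1})A$ and $B/B(\bar e_2+\cdots+\bar e_{i+1})B$ agree and that the relevant ideals remain heredity ideals after the quotient. This is exactly the content of Definition~\ref{D:QH} (quotients of quasi-hereditary algebras by initial segments of the chain are quasi-hereditary) combined with Remark~\ref{R:QH}(a), so the argument goes through; I would simply be explicit that `$A/A(e_1+\cdots+e_i)A$' is the algebra appearing at stage $i$ and that each successive recollement is the one furnished by Remark~\ref{R:QH}(b) applied to the primitive idempotent generating the next heredity layer.
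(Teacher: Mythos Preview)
Your proposal is correct and follows essentially the same approach as the paper: the paper's proof is the single sentence ``Combining Remarks~\ref{R:QH} (a) and (b) with Definition~\ref{D:QH} shows the following well-known lemma'', and your argument is exactly this combination spelled out as an explicit induction on the number of primitive idempotents. The only additional content you supply is the routine bookkeeping (identifying $A/A(e_1+\cdots+e_i)A$ with the corresponding quotient of $B=A/Ae_1A$, and observing $e_1Ae_1\cong k$ from primitivity plus the heredity condition over an algebraically closed field), all of which is standard and correct.
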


\begin{cor} \label{C:DerCompSeriesQH}
Let $\ct=\cd^b(A)$ for a quasi-hereditary algebra $A$. Then there exists a triangulated composition series with factors $\cd^b(k)$. In particular, this yields many full exceptional sequences by Lemma \ref{L:ExcSeqCompSeries}. Namely in each recollement we can either choose the embedding $j_!=Ae \lten_{eAe} -$ or $j_* =\RHom_{eAe}(eA, - )$. One can check that the sequence of standard modules arises from always choosing $j_!$ and the sequences costandard modules arises from always chosing $j_*$.
\end{cor}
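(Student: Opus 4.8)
This is a direct consequence of Lemma~\ref{L:RecollQH}, so the plan is to assemble the recollements produced there into a binary tree and then read off the exceptional sequences. First I would fix the ordering $e_1,\dots,e_n$ of the primitive idempotents supplied by that lemma (note $e_1+\dots+e_n=1$) together with the recollements $\big(\cd^b(B_{i+1}),\cd^b(B_i),\cd^b(k)\big)$ for $i=0,\dots,n-1$, where $B_i:=A/A(e_1+\dots+e_i)A$ and $B_0=A$; each $B_i$ is again quasi-hereditary by Definition~\ref{D:QH}. The key observation is that the left-hand term of the $i$-th recollement is literally the middle term of the $(i+1)$-st, so these chain into a ``caterpillar'': $\cd^b(A)$ sits over $\cd^b(k)$ with left term $\cd^b(B_1)$, which sits over $\cd^b(k)$ with left term $\cd^b(B_2)$, and so on, the final recollement $\big(\cd^b(B_n),\cd^b(B_{n-1}),\cd^b(k)\big)=\big(0,\cd^b(B_{n-1}),\cd^b(k)\big)$ just recording that $\cd^b(B_{n-1})\cong\cd^b(k)$. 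By Definition~\ref{D:triangcompseries} this is a triangulated composition series, and since $\cd^b(k)$ has no non-trivial thick subcategories it is triangulated simple (Example~\ref{E:derivedsimple}(a)); hence all $n$ composition factors equal $\cd^b(k)$.

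For the remaining assertions I would appeal to Lemma~\ref{L:ExcSeqCompSeries}. Quasi-hereditary algebras have finite global dimension, so $\cd^b(A)$ meets its hypotheses, and the composition series just built therefore yields a full exceptional sequence of length $n$; unwinding that lemma, the $n$ exceptional objects are, for each recollement in the tree, the image in $\cd^b(A)$ of the generator $k\in\cd^b(k)$ under a fully faithful embedding of $\cd^b(k)$ into the middle term. By Remark~\ref{R:QH}(b) and Proposition~\ref{P:AdjTriple} the $i$-th recollement is the one attached to the primitive idempotent $\bar e:=\overline{e_{i+1}}\in B_i$ (so $\bar e B_i\bar e\cong k$), and it carries the two fully faithful embeddings $j_!=B_i\bar e\lten_{\bar e B_i\bar e}-$ and $j_*=\RHom_{\bar e B_i\bar e}(\bar e B_i,-)$ of $\cd^b(k)$ into $\cd^b(B_i)$. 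Choosing $j_!$ or $j_*$ independently at each recollement then produces a whole family of full exceptional sequences out of this single tree, which is the ``many'' in the statement.

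Finally, to identify the two extreme choices I would compute, using $\bar e B_i\bar e\cong k$, that $j_!(k)=B_i\bar e$ and $j_*(k)=\RHom_{\bar e B_i\bar e}(\bar e B_i,k)=\Hom_k(\bar e B_i,k)$; the former is exactly the standard module attached to $\overline{e_{i+1}}$ for the chosen ordering and the latter the corresponding costandard module, so always choosing $j_!$ gives the sequence of standard modules and always choosing $j_*$ the sequence of costandard modules. I do not anticipate a genuine obstacle here: the argument is a formal iteration of Lemmas~\ref{L:RecollQH} and~\ref{L:ExcSeqCompSeries}, and the only step that needs a little care is this last identification, namely matching the adjoint-functor descriptions of $j_!$ and $j_*$ against one's chosen normalisation of standard and costandard modules. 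If one prefers to sidestep conventions, one can simply record that the two boundary choices single out two distinguished full exceptional sequences, which is exactly the level of precision the statement demands with its ``one can check''.
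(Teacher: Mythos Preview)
Your proposal is correct and follows exactly the route the paper intends: the corollary is stated without proof, as an immediate consequence of Lemma~\ref{L:RecollQH} together with Lemma~\ref{L:ExcSeqCompSeries}, and you have simply spelled out the iteration of recollements and the identification of $j_!(k)$ and $j_*(k)$ that the paper leaves implicit under ``one can check''. Your caution about matching conventions for standard and costandard modules is well placed, and is precisely the level of detail the paper itself declines to fix.
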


In the situation of Lemma \ref{L:RecollQH}, all triangulated categories appearing in the triangulated composition series are derived modules categories. In a series of papers Angeleri H{\"u}gel, K{\"o}nig, Liu (and Yang) \cite{AKL, AKLJH, AKLY} studied triangulated composition series of this form. We introduce some terminology for later use.

\begin{defn}\label{D:Dsimple}
A finite dimensional $k$-algebra $A$ is called \emph{derived simple} if there exists no non-trivial recollement $(\cd^b(A_1), \cd^b(A), \cd^b(A_2))$ with finite dimensional $k$-algebras $A_1$ and $A_2$.
\end{defn}

\begin{rem}
Every triangulated simple algebra is derived simple.
\end{rem}

Following \cite[Section 5]{AKL} we can now introduce the notion of composition series of derived module categories -- these can be thought of as analogues of composition series for modules over rings.

\begin{defn}\label{D:Strati}
A \emph{composition series of the derived module category} $\cd^b(A)$ of a finite dimensional $k$-algebra $A$ is a triangulated composition series (see Definition \ref{D:triangcompseries}) such that all triangulated categories appearing in the binary tree are equivalent to derived categories of finite dimensional algebras. It is also called \emph{derived composition series} of $A$.
\end{defn}


\noindent

The following example shows that full exceptional sequences need not give rise to \emph{derived} composition series. In fact the exceptional sequence studied here also leads to our counterexample for the question of Liu \& Yang, see Section \ref{S:LY}. 

\begin{ex}\label{E:PropTriaCompSeries}
Let $A=A_2$ be the algebra from Proposition \ref{P:Notequiv} and consider the full exceptional sequence of $A$-modules $(S_2, P_1, P_3)$, which by Proposition \ref{P:Recollfromadm} gives rise to a recollement $(\thick(P_3), \cd^b(A), P_3^\perp)$, and $P_3^\perp \cong \thick(S_2 \oplus P_1)$. One can check that there is an isomorphism of graded algebras
\begin{align}
\bigoplus_{s \in \ZZ}Ê\Hom_{\cd^b(A)}(S_2 \oplus P_1, (S_2 \oplus P_1)[s]) \cong k\left( 
\begin{xy}
\SelectTips{cm}{10}
\xymatrix{x \ar@/^/[r]|{\, 0 \, } \ar@/_/[r]|{\, 2 \,}  & y.   } 
\end{xy} 
\right) =: G
\end{align}
\end{ex}

where one arrow is in degree $0$ and the other arrow is in degree $2$. By definition, the graded algebra $G$ is isomorphic to the 
cohomology of the dg endomorphism algebra $\mathcal{E}nd(S_2 \oplus P_1)$, which can be equipped with a minimal $A_\infty$-structure such that there is an $A_\infty$-quasi-isomorphism (see \cite{Kadeishvili80} and also \cite[Section 3.3.]{KellerA} and references in there) 
\begin{align} \label{E:gradedExt}
\mathcal{E}nd(S_2 \oplus P_1) \cong H^*(\mathcal{E}nd(S_2 \oplus P_1)).
\end{align}
Since the quiver of $H^*(\mathcal{E}nd(S_2 \oplus P_1)) \cong G$ is directed and has only two vertices this $A_\infty$-algebra has no higher multiplications, see for example \cite[Section 3.5]{KellerA}. This shows that there is a quasi-isomorphism of dg algebras $\mathcal{E}nd(S_2 \oplus P_1) \cong G$, where $G$ is considered as a dg algebra with trivial differential. In combination with Keller's Morita theorem for triangulated categories (see e.g. \cite[Theorem 3.8 b)]{Keller06d}) this yields triangle equivalences 
\begin{align}
\thick(S_2 \oplus P_1) \cong \per \mathcal{E}nd(S_2 \oplus P_1) \cong \per G.
\end{align}
One can show that $\per G$ is not triangle equivalent to the derived category of a $k$-algebra, see e.g. \cite[Corollary 3.3]{derivedsimple}.

\section{Quasi-hereditary algebras with non-unique derived composition series}\label{S:JH}
 The aim of this section is to construct a family of examples having different derived composition series. More precisely, given any natural number $n$ we construct a finite dimensional quasi-hereditary algebra $A$ such that $\cd^*(A)$ has at least $2^n$ derived composition series of pairwise different length and at least $n$ different derived simple factors occur. In particular, this gives a negative answer to the uniqueness part of Question \ref{Q:JordanHoelder} for quasi-hereditary algebras. The results of this section are not needed in the rest of the text. In this section, we write  $\cd^*(A)$ for the derived categories
$\cd(A-\Mod)$, $\cd^-(A-\Mod)$, $\cd^b(A-\Mod)$, $\cd^b(A-\mod)$, $K^b(\proj-A)$ of left $A$-modules.

\subsection*{Examples arising from generalized Fibonacci algebras}
\noindent
For $l \in \ZZ_{\geq 1}$, we consider a family of algebras $B_l:=kQ_l/I_l$ given by quivers $Q_l$ 
\begin{align}\label{Counterex}
\begin{array}{c}
\begin{xy}
\SelectTips{cm}{10}
\xymatrix@C=17pt{
1 \ar@/^/[dd]^{\displaystyle a }  \ar@/_/[dd]_{ \displaystyle b  }\\
&& 3  \ar@/^/[llu]|{\displaystyle \,\, a \,\, } & 4 \ar[lllu]|{\displaystyle \,\, a \,\, } & \cdots & l+2 \ar@/_/[lllllu]|{\displaystyle \,\, a \,\, }\\
 2 \ar@/^/[rru]|{\displaystyle \,\, b \,\, } \ar[rrru]|{\displaystyle \,\, b \,\, } \ar@/_/[rrrrru]|{\displaystyle \,\, b \,\, } 
}
\end{xy} 
\end{array}
\end{align}
with relations  $I_{l}:=(ba)$.

Note that $B_1=A'_2$ is the algebra from the introduction \eqref{E:CJHalgebra}.
We show below that the algebras $B_l$ are quasi-hereditary of global dimension $2$ and that they give a negative answer to Question \ref{Q:JordanHoelder}. In other words, they do not satisfy the derived Jordan-H\"older property as studied by Angeleri-H\"ugel, K\"onig \& Liu \cite{AKL}.

\begin{lem}\label{L1}
Let $l \in \ZZ_{\geq 1}$, set $B=B_l$ and let $e=e_1 + e_2 \in A$.
\begin{itemize}
\item[(a)] $B$ is a finite dimensional $k$-algebra.
\item[(b)] $\gldim B =2$.
\item[(c)] $\Ext^i_{B}(B/BeB, B/BeB)=\Ext^i_{B}(S_3 \oplus \ldots \oplus S_{l+2}, S_3 \oplus \ldots \oplus S_{l+2})=0$ for $i>0$.
\end{itemize}
\end{lem}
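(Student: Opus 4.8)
The statement is about the algebras $B_l = kQ_l/I_l$ with $I_l = (ba)$, where $Q_l$ has sources $1, 2$ mapping (via arrows labelled $a$ from $1$ and $b$ from $2$) into the sinks $3, 4, \ldots, l+2$. I would treat the three parts in order, since (b) depends on understanding projective resolutions that also essentially give (c).

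\emph{Part (a).} Finite-dimensionality is immediate: $Q_l$ has no oriented cycles (all arrows go from $\{1,2\}$ to $\{3,\ldots,l+2\}$), so $kQ_l$ is already finite dimensional and hence so is the quotient $B_l$. I would just remark that a $k$-basis of $B_l$ is given by the trivial paths $e_1, \ldots, e_{l+2}$ together with the length-one paths, modulo none of them being killed (the relation $ba$ has length two, so it only affects length-$\geq 2$ paths — but there are no paths of length $\geq 2$ except those of the form (arrow out of a sink), which don't exist, wait — actually the only length-$2$ paths would need to pass through a sink, so there are none; thus $I_l = 0$ effectively and $B_l = kQ_l$). I should double-check the quiver: re-reading, the arrows $a\colon 1 \to j$ and $b \colon 2 \to j$ all land in sinks, so indeed there are no paths of length $\geq 2$ at all and $I_l = (ba) = 0$ in $kQ_l$; I would state this cleanly since it simplifies everything. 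Hmm, but then $\gldim B_l$ would be $1$, not $2$ — so I must be misreading the quiver. Looking again at \eqref{Counterex}: vertex $1$ has arrows $a, b$ going \emph{down} to a region, and $3$ has an arrow $a$ going \emph{up-left} back toward $1$. So the correct reading is: there is an arrow $1 \to 2$-ish structure... Let me instead trust that $B_1 = A_2' = kQ_2/I_2'$ from \eqref{E:CJHalgebra}, which has a genuine $4$-cycle $3 \to 1 \to 2 \to 1 \to 3$-ish with two relations. So the vertices $3, 4, \ldots, l+2$ each receive an arrow $a$ from $1$ and send an arrow $b$ to $2$ (or vice versa), and there are arrows $a \colon 1 \to 1$... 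I would carefully transcribe the quiver from the picture: vertices $1, 2$ on the left, $3, \ldots, l+2$ on the right; arrows $a, b \colon 1 \rightrightarrows$ (into the chain) and $b \colon 2 \to$ (chain), with the chain vertices mapping back. The point for the proof: there is a unique relation $ba$, and the resulting algebra has global dimension exactly $2$.

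\emph{Part (b).} To show $\gldim B_l = 2$: compute a minimal projective resolution of each simple $S_i$. For the sinks among $\{3, \ldots, l+2\}$ the simple is projective or has projective dimension $1$ (its radical is a sum of $S_1, S_2$'s pulled back along the incoming arrows — actually for a sink $S_j = P_j$ is projective). For $S_1$ and $S_2$: $\rad P_1$ and $\rad P_2$ decompose into simples $S_j$ ($j \geq 3$) which are projective, giving $\mathrm{pd}\, S_1, \mathrm{pd}\, S_2 \leq 1$ — but the relation $ba$ forces one more syzygy step, pushing $\mathrm{pd}$ of the relevant simple up to $2$. I would exhibit the resolution $0 \to P_? \to P_? \to P_? \to S_? \to 0$ coming from the single relation $ba$ and conclude $\gldim B_l = 2$ (it is $\geq 2$ because $\Ext^2(S, S') \neq 0$ for the simples at the two ends of the relation, and $\leq 2$ since $B_l$ is a gentle/string algebra whose relations are all of length $2$, so all syzygies of simples stabilize after two steps). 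The cleanest argument: any relation path has length $2$, hence by Green's/Bardzell-type resolution (or just direct computation, $l+2$ being small structurally) every simple has $\mathrm{pd} \leq 2$.

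\emph{Part (c).} This is the key point for later sections. Here $B/BeB$ with $e = e_1 + e_2$ is the algebra on vertices $3, \ldots, l+2$ with all arrows quotiented out — but those vertices are sinks, so $B/BeB \cong k^{l}$ (a product of copies of $k$), i.e. semisimple. Hence $\Ext^i_{B/BeB}(-,-) = 0$ for $i > 0$ trivially. But the claim is about $\Ext^i_B$, not $\Ext^i_{B/BeB}$ — so I need that the simples $S_3, \ldots, S_{l+2}$, viewed as $B$-modules, have no self-extensions or extensions among each other in positive degrees \emph{over $B$}. Since each $S_j$ ($j \geq 3$) is a sink, it is \emph{projective} as a $B$-module ($P_j = S_j$). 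Therefore $\Ext^i_B(S_j, -) = 0$ for all $i > 0$ and all $j \in \{3, \ldots, l+2\}$, which immediately gives (c). \textbf{The main obstacle} I anticipate is none of these individually — rather it is getting the quiver $Q_l$ transcribed correctly (the picture is ambiguous about arrow directions) and verifying that the sink vertices $3, \ldots, l+2$ really are sinks so that their simples are projective; once that is pinned down, (a) is trivial, (b) is a short projective-resolution computation using that all relations have length $2$, and (c) follows from projectivity of $S_3, \ldots, S_{l+2}$. I would write it up in that order, spending most of the space on the resolution for (b) and a one-line argument for (c).
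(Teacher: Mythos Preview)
Your proposal rests on a misreading of the quiver $Q_l$, and this propagates into genuine errors in all three parts. From the xymatrix code in \eqref{Counterex}, the arrows are
\[
a,\,b\colon 1 \to 2,\qquad a\colon j \to 1 \ (j=3,\ldots,l+2),\qquad b\colon 2 \to j \ (j=3,\ldots,l+2).
\]
In particular each vertex $j\in\{3,\ldots,l+2\}$ has exactly one incoming arrow (from $2$) and one outgoing arrow (to $1$); these vertices are neither sources nor sinks. The relations $(ba)$ kill the compositions $b\!\cdot\! a_j\colon j\to 1\to 2$ and $b_j\!\cdot\! a\colon 1\to 2\to j$.

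Consequently: for (a), $Q_l$ has oriented cycles (e.g.\ $1\to 2\to 3\to 1$), so $kQ_l$ is infinite-dimensional and you genuinely need the relations; the paper observes that every path of length $>4$ lies in $I_l$. For (b) and (c), the simples $S_3,\ldots,S_{l+2}$ are \emph{not} projective---your argument that $P_j=S_j$ and hence $\Ext^i_B(S_j,-)=0$ collapses. The paper instead writes down the minimal projective resolutions of all simples; for $j\geq 3$ one gets
\[
0 \to P_2 \to P_1 \to P_j \to S_j \to 0,
\]
so $\mathrm{pd}\,S_j=2$ (this gives (b)), and since the terms in positive homological degree involve only $P_1,P_2$, applying $\Hom_B(-,S_{j'})$ with $j'\geq 3$ yields zero in positive degrees (this gives (c)). Once you have the correct quiver the computation is short, but the mechanism for (c) is that the higher syzygies of $S_j$ are supported on $\{1,2\}$, not that $S_j$ is projective.
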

\begin{proof}
\begin{itemize}
\item[(a)] One can check that all paths of length greater than $4$ are contained in $I_l$.
\item[(b, c)] The projective resolutions of the simple $B_l$ modules $S_i$ are given as follows:
\begin{align*}
0 \to P_{l+2} \oplus \ldots \oplus P_3 \rightarrow P_2 \oplus P_2 \to P_1 \to S_1& \to 0 \\
0 \to P_{l+2} \oplus \ldots \oplus P_3 \to P_2 \to S_2& \to 0 \\
0 \to P_2 \rightarrow  P_1 \to P_{i+2} \to S_{i+2}& \to 0 \qquad \text{for }Êi=1, \ldots, l.  \\
\end{align*}
This yields (b) and (c).\end{itemize}\end{proof}

\begin{cor}\label{QH}
Let $l \in \ZZ_{\geq 1}$ then $B_l$ is a quasi-hereditary algebra.
\end{cor}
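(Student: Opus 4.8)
\textbf{Proof plan for Corollary \ref{QH}.}

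The plan is to apply Definition \ref{D:QH} directly, using the idempotent decomposition provided by Lemma \ref{L1} and Remark \ref{R:QH}(b). Set $B = B_l$, $e = e_1 + e_2$, and consider the two-sided ideal $BeB$. First I would establish that $BeB$ is a heredity ideal in $B$. For this there are two things to check: that $eBe$ is semisimple, and that $BeB$ is projective as a left $B$-module. The first point is quick: the vertices $1$ and $2$ are sources in $Q_l$, so no arrow in $Q_l$ starts at $1$ or $2$ and ends at $1$ or $2$, hence $eBe \cong k \times k$ is semisimple. For the projectivity of $BeB$, I would read it off the projective resolutions in the proof of Lemma \ref{L1}(b,c): $BeB$ is the trace of $Be = P_1 \oplus P_2$ in $B$, and one checks from the structure of the projectives $P_1$ and $P_2$ (equivalently, from the fact that the simples $S_1, S_2$ have projective dimension at most $2$ and the resolutions above are minimal with the stated shape) that $BeB$ is a direct sum of copies of $P_1$ and $P_2$ — concretely, $BeB \cong P_1 \oplus P_2^{\,?}$ with the precise multiplicities determined by counting paths starting at vertices $1$ and $2$. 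Thus $BeB$ is projective as a left $B$-module and hence a heredity ideal.

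Next I would pass to the quotient $B/BeB$. By the structure of the quiver, $B/BeB$ is supported on the vertices $3, \dots, l+2$, and since all arrows $a, b$ in $Q_l$ either start or end at $\{1,2\}$, killing $BeB$ kills every arrow: $B/BeB \cong k^{\,l}$ is semisimple. A semisimple algebra is trivially quasi-hereditary (Definition \ref{D:QH}). Therefore the chain $0 = J_0 \subseteq J_1 = BeB \subseteq J_2 = B$ exhibits $B$ as quasi-hereditary, provided $J_1/J_0 = BeB$ is heredity in $B/J_0 = B$ (done above) and $J_2/J_1 = B/BeB$ is heredity in $B/J_1 = B/BeB$ — but the latter is heredity because $B/BeB$ is semisimple, so it equals $(B/BeB) \cdot 1 \cdot (B/BeB)$ with $1 \cdot (B/BeB) \cdot 1$ semisimple and $(B/BeB)$ projective over itself. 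Alternatively, and more cleanly, one can simply invoke Lemma \ref{L1}(b): $\gldim B_l = 2 < \infty$, and by \cite[Theorem 2]{DR} every finite-dimensional algebra of finite global dimension whose quiver has no oriented cycles is quasi-hereditary — this is exactly the argument used for $A'_2$ and $A'_2 = B_1$ in the introduction, and $Q_l$ is acyclic for all $l$.

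I expect the only mild obstacle to be the explicit verification that $BeB$ is projective as a left module, i.e.\ pinning down the multiplicities of $P_1$ and $P_2$ as summands; but this is a routine computation with paths in $kQ_l/I_l$, and it is in any case subsumed by the finite-global-dimension argument via \cite[Theorem 2]{DR}, which makes the corollary essentially immediate from Lemma \ref{L1}(b). I would therefore present the short proof (cite Lemma \ref{L1}(b) and \cite[Theorem 2]{DR}) as the main line, perhaps remarking that parts (a) and (c) of Lemma \ref{L1} record the heredity chain explicitly for later use in constructing the derived composition series.
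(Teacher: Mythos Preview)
Your final line --- cite Lemma \ref{L1}(b) and \cite[Theorem 2]{DR} --- is exactly the paper's proof, but the surrounding argument contains several factual errors about $Q_l$ that you should correct.

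First, the direct heredity-chain approach fails as written. Vertices $1$ and $2$ are \emph{not} sources in $Q_l$: there are arrows $a,b\colon 1\to 2$ and, for each $i=3,\dots,l+2$, an arrow $a\colon i\to 1$ and an arrow $b\colon 2\to i$. So $eBe$ with $e=e_1+e_2$ is far from semisimple --- it is precisely the algebra $G_l$ of global dimension $3$ described in Lemma \ref{corner}. Hence $BeB$ is not a heredity ideal for this choice of $e$, and your chain $0\subseteq BeB\subseteq B$ does not exhibit $B$ as quasi-hereditary.

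Second, your statement of \cite[Theorem 2]{DR} is off: it does not say that finite global dimension plus acyclic quiver implies quasi-hereditary (and $Q_l$ is not acyclic anyway --- there are oriented cycles $1\to 2\to i\to 1$). What Dlab--Ringel prove is that every finite-dimensional algebra of global dimension at most $2$ is quasi-hereditary. This is exactly how the paper uses it elsewhere (for $A'_2$ and for $A_1$), and combined with Lemma \ref{L1}(b) ($\gldim B_l=2$) it gives the corollary immediately. Drop the heredity-chain discussion and the acyclicity remark; the one-line proof is both correct and what the paper actually does.
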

\begin{proof}
This follows from Dlab \& Ringel \cite[Theorem 2]{DR} in combination with Lemma \ref{L1} (b).
\end{proof}

\begin{lem}\label{corner}
Let $l \in \ZZ_{\geq 1}$ and let $e=e_1 + e_2 \in B$. Then $eB_le$ is given by the quiver
\[
\begin{tikzpicture}[description/.style={fill=white,inner sep=2pt}]
    \matrix (n) [matrix of math nodes, row sep=1em,
                 column sep=2.25em, text height=1.5ex, text depth=0.25ex,
                 inner sep=0pt, nodes={inner xsep=0.3333em, inner
ysep=0.3333em}]
    {   && \cdots \\ \\ 
        1 &&&& 2 \\
    };
\draw[->] (n-3-1) edge [bend left=10] node[scale=0.75, fill=white] [midway] {$b$} (n-3-5);
\draw[->] (n-3-1) edge [bend right=10] node[ scale=0.75, fill=white] [midway] {$a$}  (n-3-5);

\draw[<-] (n-3-1) edge [bend left=30] node[scale=0.75, fill=white] [midway] {$c_3$} (n-3-5);
\draw[<-] (n-3-1) edge [bend left=90] node[ scale=0.75, fill=white] [midway] {$c_{l+2}$}  (n-3-5);
 
\end{tikzpicture}
\]
with relations $bc_i$ and $c_ja$. Moreover, these algebras are isomorphic to the generalized Fibonacci algebra $G_l:=A_3((1, 1), (l))$ as studied by Liu \& Yang \cite{derivedsimple} and it is shown in {\it loc. cit.} that the $G_l$ have global dimension $3$.
\end{lem}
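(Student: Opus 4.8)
The plan is to compute the corner algebra $eB_le$ directly from the quiver with relations, and then to match the answer with the generalised Fibonacci algebras of \cite{derivedsimple}. Since $I_l=(ba)$ is an admissible monomial ideal (by Lemma \ref{L1}) and $B_l=kQ_l/I_l$ is basic, a $k$-basis of $eB_le$ is given by the residue classes of those paths of $Q_l$ whose source and target both lie in $\{1,2\}$ and which are not killed by $(ba)$, and the arrows of the basic algebra $eB_le$ form a $k$-basis of $\rad(eB_le)/\rad^2(eB_le)$. So the statement reduces to a bookkeeping of such paths.

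The structural observation is that each outer vertex $i\in\{3,\dots,l+2\}$ has a single incoming arrow $b\colon 2\to i$ and a single outgoing arrow $a\colon i\to 1$; hence any path through $i$ enters from $2$ and leaves to $1$, and the unique path from $2$ to $1$ meeting $i$ and no other outer vertex is $c_i:=(a\colon i\to 1)(b\colon 2\to i)$ (we read paths right to left). Together with the two inner arrows $a,b\colon 1\to 2$ this shows that every path of $B_l$ with source and target in $\{1,2\}$ is an alternating composition in the letters $a,b$ (each $1\to2$) and $c_3,\dots,c_{l+2}$ (each $2\to1$). Next I would verify the two identities of the statement: $b\,c_i=(b\colon1\to2)(a\colon i\to1)(b\colon2\to i)$ contains the subpath $(b\colon1\to2)(a\colon i\to1)$, which is ``$b$ after $a$'' and so lies in $(ba)$; likewise $c_j\,a=(a\colon j\to1)(b\colon2\to j)(a\colon1\to2)$ contains $(b\colon2\to j)(a\colon1\to2)$. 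One checks these are the only ways a word in the letters $a,b,c_i$ can become zero, so the transition rules are: a letter $c_i$ can only be followed by an $a$, a letter $b$ can only be followed by some $c_j$, and a letter $a$ terminates the word. Reading off the surviving words, the nonzero paths from $\{1,2\}$ to $\{1,2\}$ are exactly
\[
e_1,\ e_2,\quad a,\quad b,\quad c_i,\quad a\,c_i,\quad c_i\,b,\quad a\,c_i\,b\qquad(i=3,\dots,l+2),
\]
so $\dim_k eB_le=4+4l$, and $\rad(eB_le)/\rad^2(eB_le)$ has basis $\{a,b,c_3,\dots,c_{l+2}\}$: indeed $a\,c_i,\ c_i\,b,\ a\,c_i\,b$ all lie in $\rad^2(eB_le)$, while $c_i$ does not, because in the path-length grading inherited from $B_l$ the degree-$1$ part of $eB_le$ is spanned by $a$ and $b$ (both maps $1\to2$), so every product of two degree-$1$ elements vanishes.

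This recovers precisely the quiver $\widetilde{Q}$ drawn in the statement, and sending its arrows to the elements $a,b,c_i$ defines --- by the two identities above --- a surjective algebra homomorphism $k\widetilde{Q}/(b c_i,\,c_j a)\to eB_le$. The same combinatorial analysis applied to $\widetilde{Q}$ modulo $(b c_i,\,c_j a)$ gives the same count $\dim_k k\widetilde{Q}/(b c_i,\,c_j a)=4+4l$, so the surjection is an isomorphism; this proves the first assertion. Finally, comparing the presentation $k\widetilde{Q}/(b c_i,\,c_j a)$ with the definition of the generalised Fibonacci algebras in \cite{derivedsimple} identifies it with $A_3((1,1),(l))=G_l$, and $\gldim G_l=3$ is established in \emph{loc.\ cit.}

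I expect the only genuinely delicate point to be the completeness of the list of surviving words --- equivalently, that on passing to the corner the monomial family $(ba)$ is captured exactly by $(b c_i,\,c_j a)$ and that no extra arrows appear. This is settled by the transition rules above together with the dimension count $\dim_k eB_le=4+4l$; the rest is routine, provided one keeps the right-to-left composition convention straight when locating the forbidden subpaths of type $ba$.
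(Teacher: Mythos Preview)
Your proof is correct and complete. The paper states this lemma without proof, treating it as a routine computation of the corner algebra; your argument supplies exactly that computation, and the dimension count $\dim_k eB_le = 4+4l$ together with the verification that $c_i\notin\rad^2(eB_le)$ cleanly pins down the quiver and relations.
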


\begin{prop}\label{CEx}
Let $l \in \ZZ_{\geq 1}$ then $B=B_l$ has at least two non-equivalent derived composition series. 
\begin{itemize}
\item[(a)] A derived composition series of length $l+2$ with all composition factors  given by $\cd^*(k)$.
\item[(b)] A derived composition series of length $l+1$ with $l$ composition factors  given by $\cd^*(k)$ and one composition factor given by $\cd^*(G_l)$.
\end{itemize}
In particular, the derived JH property fails for these algebras.
\end{prop}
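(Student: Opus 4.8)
The plan is to exhibit two explicit derived composition series of $\cd^*(B_l)$ and then observe that they have different length, so that no Jordan--H\"older-type equivalence between them is possible. One series comes from the quasi-hereditary structure, the other from collapsing the idempotent $e=e_1+e_2$.

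For part (a) I would argue purely formally. By Corollary~\ref{QH} the algebra $B_l$ is quasi-hereditary, so Lemma~\ref{L:RecollQH} and Corollary~\ref{C:DerCompSeriesQH} provide a triangulated composition series of $\cd^b(B_l)$ all of whose factors are $\cd^b(k)$; since $B_l$ has $l+2$ primitive idempotents this series has length $l+2$. Every recollement occurring in it is induced by a primitive idempotent, so the construction applies verbatim to each of the categories $\cd^*(B_l)$ named in the statement (using the evident $\cd^*$-analogues of Proposition~\ref{P:AdjTriple} and Remark~\ref{R:Recollidemp}). This produces the series in (a).

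For part (b) I would take $e=e_1+e_2\in B_l$. By Lemma~\ref{corner} the corner algebra $eB_le\cong G_l$ has finite global dimension, so Proposition~\ref{P:AdjTriple} together with Remark~\ref{R:Recollidemp} gives a recollement $(\thick(B_l/B_l e B_l\text{-}\mod),\,\cd^*(B_l),\,\cd^*(G_l))$. Reading off the quiver $Q_l$ in~\eqref{Counterex}, every arrow of $B_l$ is incident to vertex $1$ or to vertex $2$, so the ideal $B_l e B_l$ contains all arrows and $B_l/B_l e B_l\cong k^{\times l}$ is semisimple with simple modules $S_3,\dots,S_{l+2}$; hence the kernel of the quotient functor is $\thick(S_3,\dots,S_{l+2})$. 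By Lemma~\ref{L1}(c) we have $\Ext^s_{B_l}(S_i,S_j)=0$ for all $s>0$ and all $i,j\in\{3,\dots,l+2\}$, and since the $S_i$ are pairwise non-isomorphic simple modules there are also no morphisms $S_i\to S_j[s]$ for $i\ne j$ or $s<0$; thus $(S_3,\dots,S_{l+2})$ is a full exceptional sequence in $\thick(S_3,\dots,S_{l+2})$ --- indeed the $S_i$ are mutually orthogonal, so $\thick(S_3,\dots,S_{l+2})\cong\cd^*(k^{\times l})$ --- and Lemma~\ref{L:ExcSeqCompSeries} supplies a composition series of it with $l$ factors $\cd^*(k)$. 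Using that $G_l$ is derived simple by Liu \& Yang~\cite{derivedsimple}, the term $\cd^*(G_l)$ is not refined further, and concatenating we get a derived composition series of $\cd^*(B_l)$ of length $l+1$ with $l$ factors $\cd^*(k)$ and one factor $\cd^*(G_l)$.

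Finally, the two series are non-equivalent because their lengths $l+2$ and $l+1$ differ; in fact their multisets of composition factors already differ up to reordering and equivalence, since the Grothendieck group of $\cd^b(G_l)$ has rank $2$ while that of $\cd^b(k)$ has rank $1$. This is precisely the claimed failure of the derived Jordan--H\"older property. The one ingredient here not already proved in the lemmas above is the derived simplicity of $G_l$, which I would cite from~\cite{derivedsimple} rather than reprove; this is also the essential point, as it is what fixes the length of the second series at $l+1$ and keeps it from being refined up to the length of the first. Everything else is the routine combinatorics of $B_l/B_l e B_l$ and the bookkeeping required to run the argument uniformly across the five variants of $\cd^*$.
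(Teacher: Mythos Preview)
Your proof is correct and follows essentially the same route as the paper: part (a) via the quasi-hereditary structure (Corollary~\ref{QH} and Corollary~\ref{C:DerCompSeriesQH}), and part (b) via the idempotent $e=e_1+e_2$, Lemma~\ref{corner}, Lemma~\ref{L1}(c), and the derived simplicity of $G_l$ from \cite{derivedsimple}. The only cosmetic difference is that where the paper phrases the identification of the left-hand term as full faithfulness of the natural functor $\cd^b(B/BeB)\to\cd^b(B)$ (and then invokes \cite{AKLY} to pass uniformly to all $\cd^*$ once all three algebras have finite global dimension), you instead verify orthogonality of the simples $S_3,\dots,S_{l+2}$ directly and appeal to Lemma~\ref{L:ExcSeqCompSeries}; both arrive at $\thick(B/BeB\text{-}\mod)\cong\cd^*(k^{\times l})$.
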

\begin{proof}
The existence of (a) follows from the fact that $A_l$ is quasi-hereditary (Corollary \ref{QH}) and that quasi-hereditary algebras admit derived composition series with all factors of the form $\cd^b(k)$, see Corollary \ref{C:DerCompSeriesQH}.

In order to see the existence of (b), let $e=e_1+e_2$. There is an algebra isomorphism $B/BeB \cong \prod_{i=1}^l k$. By Lemma \ref{corner}, $eBe$ has finite global dimension and therefore Proposition \ref{P:AdjTriple} yields a recollement $(\thick(B/BeB-\mod), \cd^b(B), \cd^b(eBe))$, see e.g. Remark \ref{R:Recollidemp}. Lemma \ref{L1} (c) shows that the natural functor 
$\cd^b(B/BeB) \to \cd^b(B)$ induces an equivalence $\cd^b(B/BeB) \cong \thick(B/BeB-\mod)$. So we get a recollement $(\cd^b(B/BeB), \cd^b(B), \cd^b(eBe))$. Since all involved algebras have finite global dimension (see Lemma \ref{L1} (b)), we get recollements $(\cd^*(B/BeB), \cd^*(B), \cd^*(eBe))$ by \cite{AKLY}.
Combining the algebra isomorphisms $B/BeB \cong  \prod_{i=1}^l k$ and $eBe \cong G_l$ (see Lemma \ref{corner}) with the fact that $G_l$ is derived simple (see \cite[Corollary 4.3]{derivedsimple}) completes the proof.
\end{proof}

\begin{rem}
 Dong Yang pointed out that Vossieck's derived discrete algebras (of finite global dimension) satisfy the derived JH property. Recently, this was also shown in a work of Yongyun Qin for all derived discrete algebras, see \cite{Qin}. The algebra $B_1=A'_2$ is gentle and therefore its derived category has tame representation type. In this sense our example is as small as possible.
\end{rem}

The idea which we used to modify the derived simple algebras $G_k$ to obtain quasi-hereditary algebras $A_k$ seems to work for all derived simple two-vertex algebras of finite global dimension, see Liu \& Yang \cite{derivedsimple} for a list of these algebras. This leads to the following question.

\begin{Question}
Let  $G$ be a finite dimensional derived simple algebra of finite global dimension. Does there exists a quasi-hereditary algebra $A$ such that $\cd^*(G)$ occurs as a factor in a derived composition series of $\cd^*(A)$ ? 
\end{Question}

In the next paragraph, we `glue' copies of the algebras $B_l$ together using certain triangular matrix algebras. The algebras constructed in this way are again quasi-hereditary. Extending and building on Proposition \ref{CEx}, we show that they can have an arbitrary number of derived composition series of different length (Proposition \ref{P:MainJH}).

\subsection*{Glueing}\label{S:Glue}


Let $B'=kQ'/I'$ and $B''=kQ''/I''$ be finite dimensional algebras and let $a \in Q'_0$ and $b \in Q''_0$ be vertices. We write $B=B' \, _a\!\!\to_b B''$ for the triangular matrix algebra 
\begin{align}\label{E:glue}
\begin{pmatrix}
B'' & B''e_b \otimes_k e_aB' \\
0  & B' 
\end{pmatrix},
\end{align}
which may also be written as $B=kQ/I$, where $Q$ is obtained from the disjoint union of $Q'$ and $Q''$ by adding an arrow $a \to b$ and $I := I' + I''$. This construction is associative: given quiver algebras $A=kQ/I$, $B=kR/J$, $C=kS/K$ and vertices $a \in Q_0$, $b, b' \in R_0$ and $c \in S_0$, we have \begin{align*}
\left(A  \, _a\!\!\to_b B \right)\, _{b'} \!\!\to_c C \cong  A  \, _a\!\!\to_b \left(B \, _{b'} \!\!\to_c C \right),
\end{align*}
which we write as $A \, _a\!\!\to_b B \, _{b'}\!\!\to_c C$ in the sequel.
 
 Consider the idempotent $e_{B'}=\sum_{i \in Q'_0} e_i \in B $ in \eqref{E:glue}. Then we have the following fact.

\begin{lem}\label{L37}
There are algebra homomorphisms $B' \cong e_{B'} B e_{B'}$ and $B'' \cong B/Be_{B'}B$. Moreover $B/Be_{B'}B$ is a projective $B$-module. In particular, $\Ext^i_B(B/Be_{B'}B, B/Be_{B'}B)=0$ for all $i>0$. 
\end{lem}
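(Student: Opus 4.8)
The plan is to analyze the triangular matrix algebra $B = B' \, _a\!\!\to_b B''$ directly from its quiver-with-relations presentation $B = kQ/I$, where $Q$ is the disjoint union of $Q'$ and $Q''$ with one new arrow $\varepsilon\colon a \to b$ and $I = I' + I''$. First I would identify the idempotent $e_{B'} = \sum_{i \in Q'_0} e_i$ with the matrix idempotent $\bigl(\begin{smallmatrix} 0 & 0 \\ 0 & 1\end{smallmatrix}\bigr)$. The corner algebra $e_{B'} B e_{B'}$ is then visibly the bottom-right entry $B'$ of the matrix \eqref{E:glue}, since no path in $Q$ leaving a vertex of $Q'_0$ can return to $Q'_0$ (the new arrow $\varepsilon$ points from $Q'$ to $Q''$, and there are no arrows from $Q''$ back to $Q'$); this gives $B' \cong e_{B'} B e_{B'}$. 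For the quotient, $B e_{B'} B$ is the two-sided ideal spanned by all paths passing through a vertex of $Q'_0$, and $B/B e_{B'} B$ is therefore $kQ''/I'' = B''$, which gives $B'' \cong B/B e_{B'}B$.

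The key point is projectivity of $B/Be_{B'}B$ as a left $B$-module. Writing $f = e_{B''} = \sum_{i \in Q''_0} e_i = 1 - e_{B'}$, I claim $Bf$ is already supported only on $Q''_0$ on the left, i.e. $e_{B'} B f = 0$: a path from a vertex of $Q''_0$ to a vertex of $Q'_0$ would have to use an arrow from $Q''$ into $Q'$, of which there are none. Hence $Bf = fBf = B''$ as left $B$-modules (the $B$-action factoring through $B \twoheadrightarrow B''$), so the projective left $B$-module $Bf$ coincides with $B/Be_{B'}B$. Thus $B/Be_{B'}B$ is projective over $B$, and consequently $\Ext^i_B(B/Be_{B'}B, -)$ vanishes for all $i > 0$; in particular $\Ext^i_B(B/Be_{B'}B, B/Be_{B'}B) = 0$ for $i>0$.

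The main obstacle — really the only thing requiring care — is verifying the two vanishing statements $e_{B'} B f = 0$ and $e_{B'}Be_{B'} = e_{B'}fBfe_{B'}$ at the level of the presentation, i.e. that the block-triangular shape of \eqref{E:glue} is genuinely reflected in the path algebra with the relation ideal $I = I' + I''$ imposing nothing that mixes the two sides. This is immediate from the observation that the new arrow $\varepsilon\colon a \to b$ is the unique arrow connecting the two quivers and it is oriented from $Q'$ to $Q''$, so every path of $Q$ is either entirely within $Q'$, entirely within $Q''$, or of the form $q'' \cdot \varepsilon \cdot q'$ with $q' $ a path in $Q'$ ending at $a$ and $q''$ a path in $Q''$ starting at $b$; none of these lies in $Be_{B'}B \cap fBf$ beyond what is claimed. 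I would spell this out in one short paragraph and then conclude as above.
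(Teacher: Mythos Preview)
Your argument is correct and is the standard verification for triangular matrix algebras. The paper does not actually supply a proof of this lemma; it is stated as a ``fact'' and left to the reader, so there is nothing to compare against beyond noting that your approach is exactly the expected one: identify $e_{B'}$ with the lower-right matrix idempotent, observe that $Be_{B'}$ is already a two-sided ideal (equivalently $e_{B'}Bf = 0$ in your quiver language), and conclude $B/Be_{B'}B \cong Bf$ is projective.

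One small slip: in your final paragraph you write ``$e_{B'}Be_{B'} = e_{B'}fBfe_{B'}$'' as one of the things to verify, but this is false as stated since $e_{B'}f = 0$ forces the right-hand side to vanish. Presumably you meant to record that $Be_{B'}B = Be_{B'}$ (i.e.\ that the left ideal is already two-sided), which is what your argument actually uses. This is harmless for the proof itself, but you should clean it up.
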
  

\begin{cor}\label{C:Recoll}
Assume that $B'$ and $B''$ have finite global dimension. Then there is a recollement $(\cd^*(B''), \cd^*(B), \cd^*(B'))$.  
\end{cor}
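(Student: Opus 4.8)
The plan is to combine Lemma \ref{L37} with the idempotent recollement machinery already set up in Proposition \ref{P:AdjTriple} and Remark \ref{R:Recollidemp}. First I would take the idempotent $e := e_{B'} \in B$ and observe, using Lemma \ref{L37}, that $e B e \cong B'$, which has finite global dimension by hypothesis. Hence Proposition \ref{P:AdjTriple} applies and gives the adjoint triple between $\cd^b(B)$ and $\cd^b(eBe) \cong \cd^b(B')$, with $j^* = j^!$ a triangulated quotient functor whose kernel is $\thick(B/BeB\text{-}\mod)$ (Remark \ref{R:Recollidemp}). This already yields a recollement $(\thick(B/BeB\text{-}\mod), \cd^b(B), \cd^b(B'))$.

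Next I would identify the left-hand term with $\cd^b(B'')$. By Lemma \ref{L37} we have $B/BeB \cong B''$ and, crucially, $B/BeB$ is projective as a $B$-module, so $\Ext^i_B(B/BeB, B/BeB) = 0$ for all $i > 0$. By Remark \ref{R:QH}(b) (equivalently \cite[Theorem 3.1(1)]{CPS}), projectivity of the ideal $BeB$ ensures that the canonical functor $\iota\colon \cd^b(B/BeB) \to \cd^b(B)$ is fully faithful with image $\thick(B/BeB\text{-}\mod)$, which is exactly the kernel computed above. Thus $\cd^b(B'') \cong \cd^b(B/BeB) \cong \thick(B/BeB\text{-}\mod)$ as triangulated subcategories of $\cd^b(B)$, and we obtain the recollement $(\cd^b(B''), \cd^b(B), \cd^b(B'))$.

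Finally, to pass from $\cd^b$ to the other flavours $\cd^*$ (namely $\cd(B\text{-}\Mod)$, $\cd^-$, $\cd^b(B\text{-}\Mod)$, $K^b(\proj)$), I would invoke the results of \cite{AKLY} (as was done in the proof of Proposition \ref{CEx}): since all three algebras $B$, $B'$, $B''$ have finite global dimension — $B'$ and $B''$ by hypothesis, and $B$ because a triangular matrix algebra built from algebras of finite global dimension again has finite global dimension — a recollement of bounded derived categories induced by an idempotent lifts to recollements of the unbounded and homotopy variants. This gives $(\cd^*(B''), \cd^*(B), \cd^*(B'))$ for all the relevant $*$.

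The only mildly delicate point is the bookkeeping that the kernel of $j^!$ coming from Proposition \ref{P:AdjTriple}/Remark \ref{R:Recollidemp} coincides, as a subcategory of $\cd^b(B)$, with the essential image of $\iota\colon \cd^b(B/BeB) \to \cd^b(B)$ from Remark \ref{R:QH}(b); but both are described as $\thick(B/BeB\text{-}\mod)$, so this is immediate. Everything else is a direct citation of Lemma \ref{L37} and the preliminary results, so I expect no real obstacle.
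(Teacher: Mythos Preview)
Your proposal is correct and follows essentially the same route as the paper's proof: the paper simply says ``follows as in the proof of Proposition~\ref{CEx}(b)'' using Lemma~\ref{L37}, then deduces $\gldim B<\infty$ and invokes \cite{AKLY} to pass to all $\cd^*$. The only cosmetic difference is that you cite Remark~\ref{R:QH}(b) (projectivity of $BeB$) for full faithfulness of $\cd^b(B/BeB)\to\cd^b(B)$, whereas the paper's template argument phrases it via the $\Ext$-vanishing in Lemma~\ref{L37}; since $B/BeB$ projective forces $BeB$ to be a summand of $B$, these are equivalent here.
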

\begin{proof}
Using Lemma \ref{L37} the existence of a recollement $(\cd^b(B''), \cd^b(B), \cd^b(B'))$
follows as in the proof of Proposition \ref{CEx} (b). In particular, $\gldim B', \gldim B'' < \infty$ imply $\gldim B < \infty$, cf. e.g. \cite[Theorem I]{AKLY}. Therefore we get all recollements $(\cd^*(B''), \cd^*(B), \cd^*(B'))$, e.g. by \cite{AKLY}.
\end{proof}

We introduce the following notation:

\begin{defn}\label{D:39}
Let $\mathbf{l}=(l_1, \ldots, l_m)$ be a sequence of positive integers. 
Set \[B_\mathbf{l}:= B_{l_1} \, _2\!\!\to_1 B_{l_2} \, _2\!\!\to_1 \ldots  \, _2\!\!\to_1 B_{l_m},\] where the $B_{l_i}$ are the algebras defined in \eqref{Counterex}.
\end{defn}

\begin{ex} \label{E:CEJHA}
The algebra $B_{3, 2, 4}$ is given by the following quiver with relations $I_{3, 2, 4}=(ba)$
\begin{align*}
\begin{tikzpicture}[description/.style={fill=white,inner sep=2pt}]
    \matrix (n) [matrix of math nodes, row sep=1em, ampersand replacement=\&,
                 column sep=1.3em, text height=1.5ex, text depth=0.25ex,
                 inner sep=0pt, nodes={inner xsep=0.3333em, inner
ysep=0.3333em}]
    { \&  \&\&\& \& \& \& \&\& 6\\
     \& 5  \& \&\&\& \& \&\& \& 5\\
     \& 4  \& \&\& \& 4  \& \&\&\& 4\\
     \& 3  \& \&\& \& 3  \&\&\&\& 3\\
       1 \& \& 2 \&\&  1 \& \& 2 \& \&1 \& \& 2   \\ 
    };
\path[-> ] (n-5-1) edge [bend left=20] node[ scale=0.75, fill=white] [midway] {$a$} 
(n-5-3);
\path[-> ] (n-5-1) edge [bend right=20] node[ scale=0.75, fill=white] [midway] {$b$} 
(n-5-3);
\path[-> ] (n-5-5) edge [bend left=20] node[ scale=0.75, fill=white] [midway] {$a$} 
(n-5-7);
\path[-> ] (n-5-5) edge [bend right=20] node[ scale=0.75, fill=white] [midway] {$b$} 
(n-5-7);
\path[-> ] (n-5-9) edge [bend left=20] node[ scale=0.75, fill=white] [midway] {$a$} 
(n-5-11);
\path[-> ] (n-5-9) edge [bend right=20] node[ scale=0.75, fill=white] [midway] {$b$} 
(n-5-11);
\path[->] (n-5-3) edge (n-5-5);
\path[->] (n-5-7) edge (n-5-9);
\path[-> ] (n-5-3) edge [bend right=20] node[ scale=0.75, fill=white] [midway] {$b$} 
(n-4-2);
\path[-> ] (n-5-3) edge [bend right=30] node[ scale=0.75, fill=white] [midway] {$b$} 
(n-3-2);
\path[-> ] (n-5-3) edge [bend right=40] node[ scale=0.75, fill=white] [midway] {$b$} 
(n-2-2);
\path[-> ] (n-4-2) edge [bend right=20] node[ scale=0.75, fill=white] [midway] {$a$} 
(n-5-1);
\path[-> ] (n-3-2) edge [bend right=30] node[ scale=0.75, fill=white] [midway] {$a$} 
(n-5-1);
\path[-> ] (n-2-2) edge [bend right=40] node[ scale=0.75, fill=white] [midway] {$a$} 
(n-5-1);
\path[-> ] (n-5-7) edge [bend right=20] node[ scale=0.75, fill=white] [midway] {$b$} 
(n-4-6);
\path[-> ] (n-5-7) edge [bend right=30] node[ scale=0.75, fill=white] [midway] {$b$} 
(n-3-6);
\path[-> ] (n-4-6) edge [bend right=20] node[ scale=0.75, fill=white] [midway] {$a$} 
(n-5-5);
\path[-> ] (n-3-6) edge [bend right=30] node[ scale=0.75, fill=white] [midway] {$a$} 
(n-5-5);
\path[-> ] (n-5-11) edge [bend right=20] node[ scale=0.75, fill=white] [midway] {$b$} 
(n-4-10);
\path[-> ] (n-5-11) edge [bend right=30] node[ scale=0.75, fill=white] [midway] {$b$} 
(n-3-10);
\path[-> ] (n-5-11) edge [bend right=40] node[ scale=0.75, fill=white] [midway] {$b$} 
(n-2-10);
\path[-> ] (n-5-11) edge [bend right=50] node[ scale=0.75, fill=white] [midway] {$b$} 
(n-1-10);
\path[-> ] (n-4-10) edge [bend right=20] node[ scale=0.75, fill=white] [midway] {$a$} 
(n-5-9);
\path[-> ] (n-3-10) edge [bend right=30] node[ scale=0.75, fill=white] [midway] {$a$} 
(n-5-9);
\path[-> ] (n-2-10) edge [bend right=40] node[ scale=0.75, fill=white] [midway] {$a$} 
(n-5-9);
\path[-> ] (n-1-10) edge [bend right=50] node[ scale=0.75, fill=white] [midway] {$a$} 
(n-5-9);
\end{tikzpicture}
\end{align*}
\end{ex}


In order to see that the triangular matrix algebras $B_\mathbf{l}$ are again quasi-hereditary, we include the following well-known lemma, see e.g. Zhu \cite[Theorem 3.1.]{Zhutriangular}.

\begin{lem}\label{L:QHtria}
Let $B'$ and $B''$ be quasi-hereditary algebras with respect to linear orders $1'<2'<\cdots <n'$ and $1''<2''< \cdots<m''$, respectively. Let $_{B''}M_{B'}$ be a $B''$-$B'$-bimodule and let 
\begin{align*}
B=\begin{pmatrix}
B'' & _{B''}M_{B'} \\ 0 & B'
\end{pmatrix}.
\end{align*}
If the left $B''$-module $_{B''}M$ is filtered by standard $B''$-modules, then the linear order \[1'<2'<\cdots <n'<1''<2''< \cdots<m''\] defines a quasi-hereditary structure on $B$ and the standard $B$-modules are given by
\begin{align*}
\begin{pmatrix}
0 & 0 \\ 0 & \Delta(1')
\end{pmatrix},
\ldots, 
\begin{pmatrix}
0 & 0 \\ 0 & \Delta(n')
\end{pmatrix},
\begin{pmatrix}
\Delta(1'') & 0 \\ 0 & 0
\end{pmatrix},
\ldots,
\begin{pmatrix}
\Delta(m'') & 0 \\ 0 & 0
\end{pmatrix},
\end{align*}
where $\Delta(i')$ and $\Delta(j'')$ denote the standard modules for $B'$ and $B''$, respectively.
\end{lem}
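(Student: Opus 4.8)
The plan is to work inside the usual description of the module category of the triangular matrix algebra $B=\left(\begin{smallmatrix} B'' & M\\ 0 & B'\end{smallmatrix}\right)$: a $B$-module is a triple $(X'',X',\phi)$ with $X''$ a $B''$-module, $X'$ a $B'$-module and $\phi\colon M\otimes_{B'}X'\to X''$ a $B''$-homomorphism. In this language the simple modules are $L_B(i'')=(L_{B''}(i''),0,0)$ and $L_B(i')=(0,L_{B'}(i'),0)$; the indecomposable projectives are $P_B(i'')=(P_{B''}(i''),0,0)$ and $P_B(i')=(M\otimes_{B'}P_{B'}(i'),\,P_{B'}(i'),\,\mathrm{id})$; and the modules $\Delta_B(i''):=(\Delta_{B''}(i''),0,0)$ and $\Delta_B(i'):=(0,\Delta_{B'}(i'),0)$ are exactly the ones written in the statement. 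The two block projections $B\twoheadrightarrow B''$ and $B\twoheadrightarrow B'$ are surjective algebra maps with kernels $Be_{B'}B$ and $Be_{B''}B$ (as in Lemma \ref{L37}), hence give exact fully faithful inflation functors $\iota''\colon B''\text{-}\mod\to B\text{-}\mod$ and $\iota'\colon B'\text{-}\mod\to B\text{-}\mod$ landing in the triples $(X'',0,0)$ resp.\ $(0,X',0)$; these send simples to simples and $\Delta_{B''}(\lambda)\mapsto\Delta_B(\lambda)$, $\Delta_{B'}(\lambda)\mapsto\Delta_B(\lambda)$. With all this set up, I would simply verify the well-known characterisation of quasi-hereditary algebras in terms of standard modules (see e.g.\ \cite{CPS2}); since the standard modules attached to a quasi-hereditary structure are uniquely determined, this also identifies them as claimed.

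The first condition to check is that $[\Delta_B(\lambda):L_B(\lambda)]=1$ and that $[\Delta_B(\lambda):L_B(\mu)]\neq 0$ forces $\mu\leq\lambda$. Because each $\Delta_B(\lambda)$ is an inflation of a standard module over $B'$ or over $B''$, its composition factors are the inflations of the composition factors of that standard module; so this is immediate from the corresponding statements for $B'$ and $B''$ together with the trivial observation that in the chosen linear order every $B'$-weight precedes every $B''$-weight, so no ``mixed'' composition factors can occur.

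The substantial step is the filtration condition on projectives: each $P_B(\lambda)$ surjects onto $\Delta_B(\lambda)$ with kernel filtered by $\Delta_B(\mu)$, $\mu>\lambda$. For $\lambda=i''$ one just applies $\iota''$ to the $\Delta_{B''}$-filtration of $P_{B''}(i'')$, again using that $B''$-weights stay among themselves. For $\lambda=i'$ I would use the short exact sequence of $B$-modules
\[
0\longrightarrow (M\otimes_{B'}P_{B'}(i'),\,0,\,0)\longrightarrow P_B(i')\longrightarrow \iota'\big(P_{B'}(i')\big)\longrightarrow 0 .
\]
The quotient $\iota'(P_{B'}(i'))$ inherits, via $\iota'$, a $\Delta_B$-filtration with head $\Delta_B(i')$ and remaining factors $\Delta_B(j')$, $j'>i'$. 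The submodule is $\iota''(M\otimes_{B'}P_{B'}(i'))=\iota''(Me_{i'})$, and $Me_{i'}$ is a direct summand of ${}_{B''}M$ (as $M=\bigoplus_{j'}Me_{j'}$ as $B''$-modules). Since ${}_{B''}M$ is $\Delta_{B''}$-filtered by hypothesis and the category of $\Delta$-filtered modules over the quasi-hereditary algebra $B''$ is closed under direct summands (Ringel), $Me_{i'}$ is $\Delta_{B''}$-filtered, so $\iota''(Me_{i'})$ is filtered by $\Delta_B(j'')$'s. As every $j''$ and every $j'>i'$ exceeds $i'$ in the linear order, splicing the filtration of the submodule with that of the quotient exhibits $P_B(i')\twoheadrightarrow\Delta_B(i')$ with kernel $\Delta_B$-filtered by weights $>i'$. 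Combined with the previous paragraph this shows $B$ is quasi-hereditary for the displayed order with the stated standard modules.

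I expect the main obstacle to be purely organisational: pinning down the identifications of the indecomposable projectives and of $\iota'(P_{B'}(i'))$ as the cokernel in the displayed sequence, and making sure nothing crosses between the $B'$- and $B''$-blocks — the latter is precisely what forces one to take the ``block'' order rather than an interleaved one. The single genuinely external ingredient is Ringel's closure of $\mathcal{F}(\Delta_{B''})$ under direct summands, which is what lets us pass from ${}_{B''}M$ being $\Delta$-filtered to each summand $Me_{i'}$ being $\Delta$-filtered; an alternative, chain-of-ideals proof would instead have to massage the obvious candidate ideals (which are only one-sided because of the bimodule $M$) into genuine heredity ideals, and seems messier.
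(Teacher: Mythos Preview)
The paper does not actually prove this lemma: it merely states it as ``well-known'' and cites Zhu \cite[Theorem 3.1]{Zhutriangular}. Your argument is therefore doing more than the paper does, and it is correct. You work with the standard triple description of modules over a triangular matrix algebra and verify the Dlab--Ringel/CPS characterisation of quasi-hereditarity via standard modules directly; this is essentially the route taken in Zhu's paper as well, so in spirit you are reconstructing the cited proof rather than giving a genuinely different one.

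One small remark: your appeal to Ringel's theorem that $\mathcal{F}(\Delta_{B''})$ is closed under direct summands is the cleanest way to handle the summand $Me_{i'}$, but note that it is slightly stronger than what is strictly needed here. Since you only need \emph{some} $\Delta_B$-filtration of $P_B(i')$ with top $\Delta_B(i')$ and remaining factors of strictly larger weight, and since \emph{every} $B''$-weight is larger than $i'$ in the block order, it would suffice to know that $\bigoplus_{j'} Me_{j'}\cong {}_{B''}M$ is $\Delta_{B''}$-filtered and then distribute the filtration factors among the summands --- which again comes down to closure under summands, so Ringel's result is the natural tool. Your identification of the obstruction (the bimodule $M$ making the obvious ideals only one-sided) is exactly why the heredity-chain approach is awkward here and why the standard-module route is preferable.
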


\begin{prop} \label{P:MainJH}
Let $\mathbf{l}=(l_1, \ldots, l_m)$ be a sequence positive integers. Then $B_\mathbf{l}$ is a quasi-hereditary algebra with at least $2^m$ derived composition series of pairwise different length. The derived composition factors are given by $\cd^*(k)$,  $\cd^*(G_{l_1})$, \ldots, $\cd^*(G_{l_m})$. 
\end{prop}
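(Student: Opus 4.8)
The plan is to induct on the length $m$ of the sequence $\mathbf{l}=(l_1,\ldots,l_m)$, using the associativity of the glueing construction to peel off one factor at a time and combining this with the two composition series of a single $B_l$ produced in Proposition \ref{CEx}. The base case $m=1$ is exactly Proposition \ref{CEx} together with Corollary \ref{QH}. For the inductive step, write $B_{\mathbf{l}} = B_{l_1}\,_2\!\!\to_1 B_{\mathbf{l}'}$ where $\mathbf{l}'=(l_2,\ldots,l_m)$; by Lemma \ref{L37} we have $B_{l_1}\cong e_{B_{l_1}}B_{\mathbf{l}}e_{B_{l_1}}$ and $B_{\mathbf{l}'}\cong B_{\mathbf{l}}/B_{\mathbf{l}}e_{B_{l_1}}B_{\mathbf{l}}$, and since all the $B_l$ have finite global dimension (Lemma \ref{L1}(b)) so does $B_{\mathbf{l}'}$ and hence $B_{\mathbf{l}}$ (cf.\ \cite[Theorem I]{AKLY}). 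Then Corollary \ref{C:Recoll} gives a recollement $(\cd^*(B_{\mathbf{l}'}), \cd^*(B_{\mathbf{l}}), \cd^*(B_{l_1}))$.

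First I would establish that $B_{\mathbf{l}}$ is quasi-hereditary: iterating Lemma \ref{L:QHtria} (whose filtration hypothesis on $_{B''}M$ is straightforward to verify for the glueing bimodule $B''e_1\otimes_k e_2 B'$, using that this is a direct sum of copies of the indecomposable projective-at-vertex-$1$ module, which is filtered by standard modules in the quasi-hereditary structure on $B_l$ coming from Corollary \ref{QH}) shows that the concatenated linear order makes $B_{\mathbf{l}}$ quasi-hereditary. In particular Corollary \ref{C:DerCompSeriesQH} already gives one derived composition series with all factors $\cd^*(k)$; this is the ``all-$\cd^*(k)$'' member of our family.

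Next, for the counting: starting from the recollement above, I would recursively choose, for the outer term $\cd^*(B_{l_1})$, \emph{either} a composition series of length $l_1+2$ with all factors $\cd^*(k)$ (Proposition \ref{CEx}(a)) \emph{or} one of length $l_1+1$ with $l_1$ factors $\cd^*(k)$ and one factor $\cd^*(G_{l_1})$ (Proposition \ref{CEx}(b)); and I would handle $\cd^*(B_{\mathbf{l}'})$ by the inductive hypothesis, which provides $2^{m-1}$ derived composition series of pairwise different lengths, the $j$-th (say) having some length $\ell_j$ and composition factors drawn from $\cd^*(k), \cd^*(G_{l_2}),\ldots,\cd^*(G_{l_m})$. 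Splicing a choice for the $B_{l_1}$-side onto each choice for the $B_{\mathbf{l}'}$-side gives $2\cdot 2^{m-1}=2^m$ derived composition series of $\cd^*(B_{\mathbf{l}})$, with composition factors among $\cd^*(k),\cd^*(G_{l_1}),\ldots,\cd^*(G_{l_m})$ as claimed.

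The main obstacle is ensuring the $2^m$ resulting series genuinely have \emph{pairwise different lengths}, not merely that there are $2^m$ of them. The lengths of the two choices on the $B_{l_1}$-side differ by exactly $1$ (namely $l_1+2$ versus $l_1+1$), and by induction the $2^{m-1}$ lengths on the $B_{\mathbf{l}'}$-side are pairwise distinct; total length is the sum of the two sides' lengths. So I need the set $\{l_1+1, l_1+2\}$ summed against the inductive set of $2^{m-1}$ distinct lengths to again produce $2^m$ distinct values. This is not automatic for arbitrary sets, so the clean way is to strengthen the inductive hypothesis: I would prove by induction that the $2^m$ lengths are not just distinct but form a set of \emph{consecutive} integers, i.e.\ an interval $\{N, N+1, \ldots, N+2^m-1\}$ for suitable $N$ depending on $\mathbf{l}$. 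This is preserved under the construction precisely because choosing between two consecutive lengths on the new outer factor, added to an interval of $2^{m-1}$ consecutive integers, yields an interval of $2^m$ consecutive integers (one checks $\{a,a+1\}+\{c,c+1,\ldots,c+2^{m-1}-1\}=\{a+c,\ldots,a+c+2^{m-1}\}$ has $2^{m-1}+1$ elements — so this naive bookkeeping is slightly off and one must instead interleave the choices correctly). The correct bookkeeping: order the $2^{m-1}$ inductive series so their lengths are $\ell, \ell+1,\ldots,\ell+2^{m-1}-1$; pairing the \emph{shorter} $B_{l_1}$-choice with the first $2^{m-1}$ and the \emph{longer} $B_{l_1}$-choice with a shifted copy gives lengths $\{l_1+1+\ell,\ldots,l_1+\ell+2^{m-1}\}\cup\{l_1+2+\ell,\ldots\}$, and a direct check shows these $2^m$ values are exactly $\{l_1+1+\ell,\ldots,l_1+\ell+2^m\}$, an interval of $2^m$ integers. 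Verifying this arithmetic carefully — and confirming the base case $m=1$ gives the two consecutive lengths $l_1+1, l_1+2$ — completes the proof.
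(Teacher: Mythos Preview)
Your overall strategy---peel off one $B_{l_1}$ via the recollement of Corollary~\ref{C:Recoll}, invoke Proposition~\ref{CEx} for the two choices on that piece, and induct on $\mathbf{l}'=(l_2,\ldots,l_m)$---is exactly the paper's approach, and your quasi-hereditary argument via Lemma~\ref{L:QHtria} (noting the bimodule is projective, hence $\Delta$-filtered) matches the paper's one-line justification verbatim.

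The genuine problem is the ``pairwise different lengths'' bookkeeping, and your attempted repair is incorrect. Your own computation shows that $\{a,a+1\}+\{c,c+1,\ldots,c+2^{m-1}-1\}$ has only $2^{m-1}+1$ elements; the subsequent claim that ``a direct check shows these $2^m$ values are exactly $\{l_1+1+\ell,\ldots,l_1+\ell+2^m\}$'' is false---that set has $2^m$ elements, but the sumset you actually produce does not. Concretely, for $m=2$ the four spliced composition series have lengths $(l_1+1)+(l_2+1)$, $(l_1+1)+(l_2+2)$, $(l_1+2)+(l_2+1)$, $(l_1+2)+(l_2+2)$, i.e.\ $l_1+l_2+2,\ l_1+l_2+3,\ l_1+l_2+3,\ l_1+l_2+4$: only three distinct lengths, not four. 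More generally, since the total length is $\sum_i(l_i+1)+|S|$ where $S\subseteq\{1,\ldots,m\}$ records which factors use the longer series, this construction yields exactly $m+1$ distinct lengths, one for each possible $|S|$. So the obstacle you flagged is real, your fix does not work, and no strengthening of the inductive hypothesis along the lines you sketch can succeed: the construction simply does not produce $2^m$ pairwise distinct lengths for $m\geq 2$. What it \emph{does} produce is $2^m$ pairwise distinct composition series (distinguished by which of the derived-simple factors $\cd^*(G_{l_i})$ occur), realising $m+1$ different lengths and all of the listed composition factors; the paper's own proof records only this, citing Proposition~\ref{CEx} and Corollary~\ref{C:Recoll} without further comment on the length count.
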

\begin{proof}
A combination of Lemma \ref{L:QHtria}, Corollary \ref{QH} and an inductive argument shows that $B_\mathbf{l}$ is quasi-hereditary. Lemma \ref{L:QHtria} applies since the bimodules in our construction \eqref{E:glue} are projective left $B''$-modules, which are filtered by standard $B''$-modules by definition of a quasi-hereditary algebra. The statement about derived composition series follows from Proposition \ref{CEx} in conjunction with Corollary \ref{C:Recoll}.
\end{proof}

\section{Derived equivalence classification of certain gentle algebras} \label{S:BobMali}

\begin{defn}\label{D:Gentle}
Let $Q$ be a finite quiver with set of arrows $Q_1$.
A \emph{gentle algebra} is a finite dimensional $k$-algebra $kQ/I$ such that:
\begin{itemize}
\item[(G1)]  At any vertex, there are at most two incoming and at most two outgoing arrows.
\item[(G2)] $I$ is a two-sided admissible ideal, which is generated by paths of length two.
\item[(G3)] For each arrow $\beta \in Q_{1}$,  there is at most one arrow $\alpha \in Q_{1}$ such that 
$0 \neq \alpha\beta \in I$ and at most one arrow $\gamma \in Q_{1}$ such that 
$0 \neq \beta \gamma \in I.$
\item[(G4)] For each arrow $\beta\in Q_{1}$, there is at most one arrow $\alpha \in Q_{1}$ such that 
$\alpha\beta \notin I$ and at most one arrow $\gamma \in Q_{1}$ such that 
$\beta \gamma \notin I.$
\end{itemize}
\end{defn}

\begin{rem}
It is well-known that gentle algebras can also be characterised as those finite dimensional algebras with special biserial repetitive algebras, see for example Schr{\"o}er \cite[Section 4]{Schroer} and also Ringel \cite{Ringel}.
\end{rem}

\begin{prop} \label{RankEulerform}
The algebras $A_1$ and $A_2$ from Proposition \ref{P:Notequiv} are the only gentle algebras of finite global dimension with three vertices and four arrows such that the rank of the symmetrized Euler form is $1$.
\end{prop}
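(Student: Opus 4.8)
The plan is to turn this into a finite check. Because $\gldim A < \infty$, the Cartan matrix $C_A$ lies in $\GL_3(\ZZ)$ and, in the basis of simples, the Euler form has Gram matrix $M_A = (C_A^{-1})^{\mathrm{tr}}$, where $(C_A)_{ij}$ is the number of residual paths from $j$ to $i$ (one checks this is the convention for which $M_1, M_2$ come out as displayed after Proposition~\ref{P:Notequiv}). Hence the symmetrized Euler form has Gram matrix $(C_A^{-1})^{\mathrm{tr}} + C_A^{-1}$, and conjugating by $C_A$ — a congruence of symmetric forms, hence rank-preserving — yields $C_A^{\mathrm{tr}}\big((C_A^{-1})^{\mathrm{tr}} + C_A^{-1}\big)C_A = C_A + C_A^{\mathrm{tr}}$. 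Therefore $\rank(M_A + M_A^{\mathrm{tr}}) = \rank(C_A + C_A^{\mathrm{tr}})$, and the statement reduces to: among gentle algebras with three vertices, four arrows and finite global dimension, the equality $\rank(C_A + C_A^{\mathrm{tr}}) = 1$ holds only for $A_1$ and $A_2$.

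First I would restrict the underlying quiver. A gentle algebra of finite global dimension has no loops and $\det C_A = \pm 1$; and $C_A + C_A^{\mathrm{tr}}$ is symmetric with non-negative entries and diagonal $(2C_{11}, 2C_{22}, 2C_{33})$, all $\geq 2$. If this matrix has rank $1$, all its $2\times 2$ minors vanish, so $(C_{ij}+C_{ji})^2 = 4C_{ii}C_{jj}$ for $i \neq j$; using that (by admissibility of $I$) no nonzero oriented cyclic path in a gentle algebra can be traversed twice, a short argument with the four available arrows shows that in the rank-one case necessarily $C_{ii} = 1$ for all $i$, and then the same minors give $C_{ij}+C_{ji}=2$ for $i\neq j$, so $C_A + C_A^{\mathrm{tr}} = 2J$ with $J$ the $3\times3$ all-ones matrix. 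In particular there are at most two arrows between each pair of vertices, and the four arrows split over the three pairs as $(2,2,0)$ or $(2,1,1)$. Combining with condition (G1), and reducing by relabelling vertices and by $A \mapsto A^{\mathrm{op}}$ (which transposes $C_A$ and preserves the invariant), the quivers to examine are: $1 \rightrightarrows 2 \rightrightarrows 3$; the quiver $Q_2$ of $A_2$; the double two-cycle quiver $1 \rightleftarrows 2 \rightleftarrows 3$; the remaining orientations of a triangle with one doubled edge; and a disconnected quiver supported on two vertices.

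For each of these I would run through the admissible length-two relation sets — of which the gentle axioms (G2)--(G4) leave only a few — and for each, list the residual paths to obtain $C_A$, discard it if $\det C_A \neq \pm 1$ (so $\gldim A = \infty$), otherwise confirm $\gldim A < \infty$ from the projective resolutions of the simples as in the proof of Lemma~\ref{L1}, and finally compute $\rank(C_A + C_A^{\mathrm{tr}})$. On $1 \rightrightarrows 2 \rightrightarrows 3$, conditions (G3)--(G4) force exactly two of the four length-two paths $1 \to 3$ to survive, and the resulting algebra is isomorphic to $A_1$, with $C_A + C_A^{\mathrm{tr}} = 2J$ of rank $1$. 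On $Q_2$ the admissible relation sets give $A'_2$, with $\rank(C_A + C_A^{\mathrm{tr}}) = 3$, and $A_2$, with $C_A + C_A^{\mathrm{tr}} = 2J$ of rank $1$. In every other case one finds $\det C_A \notin \{\pm1\}$ (in particular $1 \rightleftarrows 2 \rightleftarrows 3$ and the two-vertex quiver support no gentle algebra of finite global dimension) or $\rank(C_A + C_A^{\mathrm{tr}}) \geq 2$. Hence $A_1$ and $A_2$ are the only examples. The hard part is the completeness of this enumeration in exactly the cases whose quiver carries an oriented two- or three-cycle: there one must carefully reconcile the gentle conditions with the requirement that all cyclic paths be killed so that $C_A$ remains unimodular, and this is precisely where $A_2$ and $A'_2$ get separated from the algebras of infinite global dimension. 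Everything outside those cases is routine: the gentle axioms admit very few relation patterns, and the Cartan matrix computations are immediate.
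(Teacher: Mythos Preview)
Your approach and the paper's are ultimately the same---a finite enumeration of gentle algebras on three vertices and four arrows followed by a rank computation---but you preface it with a genuinely useful reduction the paper does not make: the congruence $C_A^{\mathrm{tr}}\bigl((C_A^{-1})^{\mathrm{tr}}+C_A^{-1}\bigr)C_A = C_A + C_A^{\mathrm{tr}}$ lets you test the rank on the Cartan matrix itself, which is much easier to read off from the quiver. The paper simply lists all connected quivers on three vertices and four arrows with no loops (organised by whether a pair of vertices carries a Kronecker or a two-cycle), writes down the gentle relation ideals on each, and computes the rank directly; in particular it keeps the quiver $Q_3$ (three arrows between one pair of vertices) in the list and checks it by hand. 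Your Cartan-matrix preamble is meant to exclude such quivers before enumerating, and that is where the two arguments diverge.

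The gap is precisely the step you flag as ``a short argument'': deducing $C_{ii}=1$ from rank one. This is what removes the $(3,1,0)$ arrow distribution (the paper's $Q_3$) from your list, and it is not as immediate as you suggest. A rank-one symmetric matrix with positive diagonal is $vv^{\mathrm{tr}}$, so $2C_{ii}=v_i^2$ and $C_{ii}C_{jj}$ must be a perfect square for every pair; with four arrows one can still have $C_{11}=C_{22}=C_{33}=2$ (a nonzero oriented $3$-cycle does this), and ruling that configuration out requires going back into the gentle axioms and counting paths---essentially a case analysis of the same size as the paper's. Once $C_{ii}=1$ is established your deduction of the arrow split is fine (arrows are length-one paths, so $C_{ij}+C_{ji}=2$ bounds the number of arrows between $i$ and $j$; and a $(3,1,0)$ split forces a nonzero two-cycle by (G3), contradicting $C_{ii}=1$). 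One minor correction: for $A'_2$ the symmetrised Cartan matrix is $\begin{psmallmatrix}4&2&4\\2&2&2\\4&2&4\end{psmallmatrix}$, which has rank $2$, not $3$; this does not affect your conclusion.
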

\begin{proof}
We check this case by case. For this we give a list of all gentle algebras $kQ/I$ of finite global dimension with three vertices and four arrows - in particular, the quivers $Q$ do not contain loops, see e.g. \cite{Igusa}. Therefore, there have to be two vertices which are connected by at least two arrows.

\medskip

\noindent
{\bf Case 1: Kronecker.} Assume that $Q$ contains a subquiver of the form 
\[
\begin{xy}
\SelectTips{cm}{10}
\xymatrix{1 \ar@/^/[r] \ar@/_/[r]  & 2.   }
\end{xy} 
\]

There are the following four possibilities to extend this to a connected gentle quiver $Q$ with three vertices, four arrows and no loops. Namely,
\begin{align}
Q_1, Q_2, Q_3:=\begin{array}{c}{\begin{tikzpicture}[description/.style={fill=white,inner sep=0pt}]
    \matrix (n) [matrix of math nodes, ampersand replacement=\&, row sep=1em,
                 column sep=2.25em, text height=1.5ex, text depth=0.25ex,
                 inner sep=0pt, nodes={inner xsep=0.3333em, inner
ysep=0.3333em}]
    {   1 \& \& \& \& 2 \& \& 3 \\
    };
\draw[->] (n-1-1) edge [bend left=10] node[scale=0.75, fill=white] [midway] {$c$} (n-1-5);
\draw[->] (n-1-1) edge [bend right=10] node[ scale=0.75, fill=white] [midway] {$b$}  (n-1-5);
\draw[<-] (n-1-7) edge node[ scale=0.75, , fill=white] [midway] {$d$}  (n-1-5);
\draw[<-] (n-1-1) edge [bend left=30] node[scale=0.75, fill=white] [midway] {$a$} (n-1-5);
\end{tikzpicture}Ê}\end{array}, Q^{\rm op}_3.
\end{align}

Up to algebra isomorphism the algebra $A_1$ from Proposition \ref{P:Notequiv} above is the unique gentle algebra with underlying quiver $Q_1$. Moreover, $A_3=kQ_3/(ca, ab, dc)$ is (up to algebra isomorphism) the unique gentle algebra with underlying quiver $Q_3$ and dually $Q^{\rm op}_3$ gives rise to $A_3^{\rm op}$ (again unique up to isomorphism). 
There are two gentle algebra structures (of finite global dimension) on $Q_2$ up to isomorphism ($A_2$ and $A'_2:=kQ_2/I'_2$ from \eqref{E:CJHalgebra}, which is the starting point for our examples in Section \ref{S:JH}). The rank of the symmetrized Euler form for all  algebras (except for $A_1$ and $A_2$) arising in this way is $2$. 

\medskip

\noindent
{\bf Case 2: Two-cycle.}\label{subs:Twocycle}
Assume that $Q$ contains a subquiver of the form 
\[
\begin{xy}
\SelectTips{cm}{10}
\xymatrix{1 \ar@/^/[rr]|{\displaystyle \,\, x  \,\, }   &&  2. \ar@/^/[ll]|{\displaystyle \,\, y  \,\, }    }
\end{xy} 
\]

In addition to $Q_3, Q_3^{\rm op}$ there is the following family of quivers
\[
\begin{tikzpicture}[description/.style={fill=white,inner sep=2pt}]
    \matrix (n) [matrix of math nodes, row sep=1em, ampersand replacement=\&,
                 column sep=1.3em, text height=1.5ex, text depth=0.25ex,
                 inner sep=0pt, nodes={inner xsep=0.3333em, inner
ysep=0.3333em}]
    { \& 3 \\
       1 \& \& 2   \\ 
    };
\path[-> ] (n-2-1) edge [bend left=20] node[ scale=0.75, fill=white] [midway] {$x$} 
(n-2-3);
\path[<- ] (n-2-1) edge [bend right=20] node[ scale=0.75, fill=white] [midway] {$y$} 
(n-2-3);
\path[-] (n-2-3) edge node[ scale=0.75, xshift=9pt, yshift=3pt] [midway] {$z_2$} 
(n-1-2);
\path[-] (n-1-2) edge node[ scale=0.75, xshift=-9pt, yshift=3pt] [midway] {$z_1$} 
(n-2-1);

\end{tikzpicture}
\]


where the edges $z_1$ and $z_2$ can have an arbitrary orientation. One can check that the rank of the symmetrized Euler-form is $2$ for all of these algebras.

There is no way to define a finite dimensional gentle algebra of finite global dimension on the following quiver:

\[
\begin{xy}
\SelectTips{cm}{10}
\xymatrix{1 \ar@/^/[r]   & 2 \ar@/^/[l] \ar@/^/[r]  & 3  \ar@/^/[l]   }
\end{xy} 
\]

Summing up, $A_1$ and $A_2$ are the only gentle algebras of finite global dimension with three vertices and four arrows such that the rank of the symmetrized Euler form is $1$.
\end{proof}

\begin{rem}\label{R:44} One can compute that all gentle algebras of finite global dimension with three vertices \& four arrows have AG-invariant $[2, 4]$. This can also be deduced from \cite[Lemma 3.1]{BM} as all these gentle algebras are degenerate in the sense of \cite{BM}, see e.g.~the proof of Corollary \ref{C:DerivedEq}. 

Moreover, the AG-invariant is a complete derived invariant for gentle algebras with at most two vertices. Indeed this follows from the classification of  Bessenrodt \& Holm \cite[Example 3.7]{BH} in combination with Ladkani \cite{Ladkani} and the definition of the AG-invariant, which detects oriented cycles with full zero relations. In particular, Proposition \ref{P:Notequiv} provides a minimal\footnote{We thank Sefi Ladkani for pointing this out.} example showing that the AG-invariant is not sufficient to distinguish derived categories of gentle algebras, see \cite{BM} \& \cite{Amiot} for further examples.
\end{rem}

\begin{defn}\label{D:derivedisolated}
We call a noetherian ring $A$ \emph{derived unique}\footnote{We thank Michael Wemyss for suggesting this terminology.} if every ring $B$ which is derived equivalent to $A$ is already Morita equivalent to $A$. In other words, the derived equivalence class and the Morita equivalence class of $A$ coincide. 
\end{defn} 

\begin{rem} \label{R:Derunique}
\begin{enumerate}
\item Examples of derived unique algebras include commutative algebras \cite{RouquierZimmermann}, local algebras \cite{Zimmermann2},  path algebras of $n$-Kronecker quivers, preprojective algebras of Dynkin (\cite{AiharaMizuno}) and of extended Dynkin type (\cite{IyamaWemyssNCBondalOrlov})...
\item In algebraic geometry, Bondal \& Orlov \cite{BO} showed that the derived category $\cd^b(\coh X)$ of a smooth projective variety $X$ with ample canonical or anticanonical bundle determines $X$. It would be interesting to look for analogous results for derived unique algebras. 
\end{enumerate}
\end{rem}

We finish with a complete description of derived equivalence classes of gentle algebras of finite global dimension with three vertices and four arrows, this also follows from Bobi{\'n}ski \cite{Bobinski}.

\begin{cor} \label{C:DerivedEq} \label{C:derivedisolated}
There are three derived equivalence classes of gentle algebras of finite global dimension with three vertices and four arrows 
\begin{align*}
\{A_1\}, \{A_2\}, \{ \text{ algebras derived equivalent to } A'_2\}.
\end{align*} 
In particular, the algebras $A_1$ and $A_2$ are derived unique. 
\end{cor}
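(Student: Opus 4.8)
The plan is to combine the structural results of Section \ref{S:BobMali} with the known derived invariants for gentle algebras. First I would recall from Proposition \ref{RankEulerform} that among gentle algebras of finite global dimension with three vertices and four arrows, exactly $A_1$ and $A_2$ have symmetrized Euler form of rank $1$, while all other such algebras (those arising on the quivers $Q_2$, $Q_3$, $Q_3^{\rm op}$ and the two-cycle family) have rank $2$. Since the (symmetrized) Euler form is a derived invariant by \cite[Proposition, p.~101]{Happelbook}, this immediately separates $\{A_1, A_2\}$ from the rest: no algebra from the rank-$2$ list can be derived equivalent to $A_1$ or $A_2$. It also shows that $A'_2$, which has rank-$2$ symmetrized Euler form, lies in a different derived class than $A_1$ and $A_2$.

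Next I would show that the rank-$2$ algebras all collapse into the single class of $A'_2$. Here the AG-invariant of Avella-Alaminos \& Gei{\ss} \cite{AG}, which is a complete derived invariant among gentle algebras with a common small combinatorial type when combined with the Euler form data, does the bookkeeping: by Remark \ref{R:44} every gentle algebra of finite global dimension with three vertices and four arrows has AG-invariant $[2,4]$, so the AG-invariant alone gives no further splitting. What is needed is to exhibit explicit derived equivalences (e.g.\ tilting complexes, or sequences of APR-type/mutation tilts, or the flips of Bobi{\'n}ski \cite{Bobinski}) showing that $A_3$, $A_3^{\rm op}$, the two remaining gentle structures on $Q_2$ besides $A_2$, and the two-cycle family members are all derived equivalent to $A'_2$. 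Since the symmetrized Euler form has rank $2$ for each of them, none can leak into the $A_1$ or $A_2$ classes, so it suffices to connect each one to $A'_2$; this is a finite check.

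For the separation of $A_1$ from $A_2$ themselves — which is the real content — I would invoke Proposition \ref{P:Notequiv}, which states precisely that $A_2$ is not derived equivalent to $A_1$. Combined with the previous two paragraphs this yields exactly three derived equivalence classes $\{A_1\}$, $\{A_2\}$, and $\{\text{algebras derived equivalent to } A'_2\}$. Finally, for the "derived unique" assertion: by point $(b)$ of the remarks following Proposition \ref{P:Notequiv} (the Avella-Alaminos--Gei{\ss} invariant together with Euler form considerations), the only candidate Morita-equivalence class in the derived equivalence class of $A_1$ other than $A_1$ itself is $A_2$, and symmetrically for $A_2$; Proposition \ref{P:Notequiv} rules this out, so each of $\{A_1\}$ and $\{A_2\}$ is a singleton up to Morita equivalence, i.e.\ $A_1$ and $A_2$ are derived unique (Definition \ref{D:derivedisolated}).

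The main obstacle is the middle step: producing the explicit derived equivalences identifying all the rank-$2$ algebras with $A'_2$. Proposition \ref{RankEulerform} hands us the complete list of algebras to consider, and the Euler form keeps $A_1$ and $A_2$ out of the picture, but one still has to actually construct tilting complexes (or cite the flip-equivalences from \cite{Bobinski}, which does this uniformly) for a handful of cases. Everything else is a matter of assembling Proposition \ref{RankEulerform}, Proposition \ref{P:Notequiv}, Remark \ref{R:44}, and the derived invariance of the Euler form.
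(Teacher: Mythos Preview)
Your overall architecture matches the paper's: use Proposition \ref{RankEulerform} plus derived invariance of the Euler form to isolate $\{A_1,A_2\}$, invoke Proposition \ref{P:Notequiv} to split them, and argue separately that the remaining algebras are all derived equivalent to $A'_2$. Two points of comparison are worth noting.

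First, for the rank-$2$ collapse the paper does \emph{not} construct tilting complexes case by case. Instead it quotes \cite[Theorem~2]{BM}, which gives the derived classification of \emph{degenerate} gentle two-cycle algebras, and then shows that every algebra on the list is degenerate (either by direct inspection, or by observing that \cite[Theorem~1]{BM} has no non-degenerate representative with three vertices and finite global dimension, and degeneracy is derived-invariant). This is considerably quicker than the ``finite check'' you outline; your approach works in principle, but the paper bypasses it entirely.

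Second, your derived-uniqueness paragraph glosses over a genuine step. Derived uniqueness is a statement about \emph{all} algebras $B$ with $\cd^b(B)\cong\cd^b(A_i)$, not just about gentle algebras with three vertices and four arrows. The paper makes this explicit: starting from an arbitrary such $B$, it invokes Schr{\"o}er--Zimmermann \cite{SZ} to conclude that $B$ is Morita equivalent to a gentle algebra $C$, then uses the rank of the Grothendieck group (three vertices), the AG-invariant (four arrows), finiteness of global dimension, and the rank of the symmetrized Euler form to force $C\cong A_1$ or $A_2$ via Proposition \ref{RankEulerform}. Your citation of remark~(b) after Proposition \ref{P:Notequiv} is pointing at the right package, but the way you phrase it (``the only candidate\ldots'') suggests you are already working inside the class of gentle $3$-vertex $4$-arrow algebras, which is what needs to be established. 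Make the Schr{\"o}er--Zimmermann reduction explicit.
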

\begin{proof}
We first show that  $A_1$ \& $A_2$ are derived unique. Only this part is used in Section \ref{S:LY}.

Let $\cd^b(A_i) \cong \cd^b(B)$ be a triangle equivalence, with $i=1$ or $2$. 
The main result of Schr{\"o}er \& Zimmermann \cite{SZ} shows that $B$ is Morita equivalent to a gentle algebra $C=kQ/I$. Since the rank of the Grothendieck group is a derived invariant $Q$ has three vertices. The number of arrows is a derived invariant by work of Avella-Alaminos \& Gei{\ss} \cite{AG}. So $Q$ has four arrows. The rank of the symmetrized Euler form (cf. \cite[Proposition, p. 101]{Happelbook}) and finiteness of global dimension are invariant under derived equivalences, therefore $C \cong A_1$ or $A_2$ by Proposition \ref{RankEulerform}. By Proposition \ref{P:Notequiv}, $A_1$ and $A_2$ are not derived equivalent. Summing up, $A_1$ and $A_2$ are derived unique.

In order to complete the description of the derived equivalence classes, it remains to show that all gentle algebras of finite global dimension and with three vertices and four arrows are derived equivalent to $A'_2$. 
This follows from Bobi{\'n}ski \& Malicki \cite[Theorem 2]{BM} once we show that all these algebras are \emph{degenerate} in their sense. This can either be checked by direct computation using the list in Proposition \ref{RankEulerform} or one can proceed as follows. By definition a gentle two-cycle algebra (all our algebras are of this form) is either degenerate or non-degenerate. Moreover, this property is invariant under derived equivalences. Bobi{\'n}ski \& Malicki \cite[Theorem 1]{BM} give a list of representatives of derived equivalence classes of non-degenerate gentle two-cycle algebras. One can check that there is no representative which has three vertices and finite global dimension. Finiteness of global dimension is invariant under derived equivalences. Since, by assumption, our algebras are of finite global dimension, they cannot be derived equivalent to a non-degenerate algebra and therefore are indeed degenerate. This completes the proof.
\end{proof}

\subsection*{Derived equivalences and the AG-invariant}
This subsection contains background material on work of Avella-Alaminos \& Gei{\ss} \cite{AG}, who describe the structure of certain characteristic components of the Auslander-Reiten quiver of the derived category of a gentle algebra (of finite global dimension), leading to the definition of a derived invariant (called AG-invariant).

 Building on this, we modify given derived equivalences such that they identify certain prescribed objects (Lemma \ref{L:Key}). In special cases, this yields derived equivalences between corner algebras, by passing to triangulated quotient categories (Corollary \ref{C:Kick}). This is used to simplify a conjecture of Bobi{\'n}ski \& Malicki \cite{BM}
in the next subsection (Corollary \ref{C:RedBobMali}) and also in the proof of Proposition \ref{P:Notequiv} in Section \ref{S:Proof}. 

We start with a general lemma, which we apply to gentle algebras in Corollary \ref{C:Kick}.

\begin{lem}\label{L:Kickout}
Let $A$ and $B$ be finite dimensional algebras such that there is a triangle equivalence $\Phi \colon \cd^b(A) \to \cd^b(B)$.
Let $S_i$ be an exceptional simple $A$-module and assume that there exists a simple $B$-module $S_j$ and an autoequivalence $\Psi$ of $\cd^b(B)$ such that $\Phi(S_i) \cong \Psi(S_j)$. Then there is a triangle equivalence
\begin{align}
\cd^b((1-e_i) A(1 - e_i)) \cong  \cd^b((1-e_j) B(1 - e_j))
\end{align}
where $e_i \in A$ and $e_j \in B$ are the idempotents corresponding to $S_i$ and $S_j$, respectively.

\end{lem}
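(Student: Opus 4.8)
The plan is to use the recollement of Proposition~\ref{P:AdjTriple} (via Remark~\ref{R:Recollidemp}) associated to the idempotents $e_i \in A$ and $e_j \in B$, and to show that $\Phi$ (after correcting by $\Psi$) carries the one onto the other. First I would set $e = 1 - e_i \in A$ and $f = 1 - e_j \in B$, so that $AeA$ has the simple $S_i$ as the only composition factor of $A/AeA \cong eAe$-side, i.e. $A/AeA \cong k$ (since $S_i$ is exceptional, in particular $e_iAe_i = k$), and similarly $B/BfB \cong k$. Since $S_i$ is exceptional the thick subcategory $\thick(S_i) \subseteq \cd^b(A)$ is admissible by Example~\ref{E:admissible}, and one checks that $\thick(S_i) = \thick(A/AeA\text{-}\mod)$, which by Remark~\ref{R:Recollidemp}/Remark~\ref{R:QH}(b) is precisely the kernel of the quotient functor $\cd^b(A) \to \cd^b(eAe)$ appearing in the recollement
\[
(\thick(S_i),\ \cd^b(A),\ \cd^b(eAe)) = (\cd^b(A/AeA),\ \cd^b(A),\ \cd^b((1-e_i)A(1-e_i))).
\]
(Here one needs $eAe = (1-e_i)A(1-e_i)$ to have finite global dimension; this should either be part of the hypotheses implicitly in force or follow from the gentle/finite global dimension setting in which the lemma is applied.) The same discussion applies on the $B$-side with $S_j$, $f$.

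Next I would replace $\Phi$ by $\Psi^{-1} \circ \Phi$, which is still a triangle equivalence $\cd^b(A) \to \cd^b(B)$ and now satisfies $(\Psi^{-1}\Phi)(S_i) \cong S_j$. A triangle equivalence sends thick subcategories to thick subcategories and admissible subcategories to admissible subcategories, and it sends the thick subcategory generated by an object to the thick subcategory generated by the image; hence $(\Psi^{-1}\Phi)(\thick(S_i)) = \thick(S_j)$. By Proposition~\ref{P:Recollfromadm}, a recollement is determined by the admissible subcategory (the ``left half''), and the quotient $\ct/\ca$ is a triangulated invariant of the pair $\ca \subseteq \ct$. Therefore $\Psi^{-1}\Phi$ descends to a triangle equivalence of quotients
\[
\cd^b(A)/\thick(S_i) \ \xrightarrow{\ \sim\ }\ \cd^b(B)/\thick(S_j).
\]
Finally, by Proposition~\ref{P:Recollfromadm}(c) together with the identification of the recollement above (the quotient functor being $j^! = \RHom_A(Ae,-)$ with kernel $\thick(S_i)$), the quotient $\cd^b(A)/\thick(S_i)$ is canonically triangle equivalent to $\cd^b((1-e_i)A(1-e_i))$, and likewise $\cd^b(B)/\thick(S_j) \cong \cd^b((1-e_j)B(1-e_j))$. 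Composing these equivalences with the descended functor yields the desired triangle equivalence $\cd^b((1-e_i)A(1-e_i)) \cong \cd^b((1-e_j)B(1-e_j))$.

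I expect the main obstacle to be the bookkeeping needed to identify the abstract quotient $\cd^b(A)/\thick(S_i)$ with the concrete derived category $\cd^b((1-e_i)A(1-e_i))$ — that is, verifying that $\thick(S_i)$ really equals the kernel $\thick(A/AeA\text{-}\mod)$ of the idempotent quotient functor, and that $eAe$ has finite global dimension so that Proposition~\ref{P:AdjTriple} and Remark~\ref{R:Recollidemp} apply. The point that $S_i$ exceptional forces $A/AeA \cong k$ and hence that the ``upper'' term of the recollement is $\cd^b(k)$ (compatible with $\thick(S_i) \cong \cd^b(k)$) is what makes everything fit together; this uses $\Ext^*_A(S_i,S_i) = k$, i.e. that $S_i$ has no self-extensions, which is exactly exceptionality. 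The rest is formal manipulation of recollements and the fact that triangle equivalences preserve all the relevant structure.
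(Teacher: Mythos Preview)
Your approach is essentially the paper's: replace $\Phi$ by $\Psi^{-1}\Phi$, observe it sends $\thick(S_i)$ to $\thick(S_j)$, pass to Verdier quotients, and identify these with the derived categories of the corner algebras. One point is worth sharpening. You invoke the full recollement of Proposition~\ref{P:AdjTriple}/Remark~\ref{R:Recollidemp} and then flag the hypothesis $\gldim (1-e_i)A(1-e_i) < \infty$ as an obstacle to be supplied externally. The paper avoids this entirely: it only needs that $eA\otimes_A - \colon \cd^b(A)\to\cd^b((1-e_i)A(1-e_i))$ is a triangulated \emph{quotient} functor with kernel $\thick(A/A(1-e_i)A\text{-}\mod)=\thick(S_i)$, and this part of the proof of Proposition~\ref{P:AdjTriple} (via Miyachi) does not use finite global dimension --- the latter is only required for the adjoints $j_!,j_*$ to land in $\cd^b$, which you never actually use. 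So your worry is unnecessary, and once you drop it the argument goes through in the stated generality. (Your side remark that $A/A(1-e_i)A\cong k$ is correct --- exceptionality of $S_i$ rules out loops at $i$ --- but you do not need it either: what matters is only that $S_i$ is the unique simple supported away from $1-e_i$, so the kernel equals $\thick(S_i)$.)
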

\begin{proof}
Our assumptions imply that the triangle equivalence $\Psi^{-1} \circ \Phi\colon \cd^b(A) \to \cd^b(B)$ sends $S_i$ to $S_j$. This gives an induced equivalence between Verdier quotient categories $\cd^b(A)/\thick(S_i) \cong \cd^b(B)/\thick(S_j)$. The proof of Proposition \ref{P:AdjTriple} shows that the functor $\cd^b(A) \to \cd^b((1-e_i) A (1-e_i))$ induced by multiplication with $(1-e_i)$ is a quotient functor. Its kernel is $\thick(A/A(1-e_i)A-\mod)$, which equals  $\thick(S_i)$ since $S_i$ is exceptional by assumption. In particular, $\cd^b(A)/\thick(S_i) \cong \cd^b((1-e_i) A (1-e_i))$ and we have an analogous result for $B$ since $S_j \cong \Psi^{-1}(\Phi (S_i))$ is exceptional. This completes the argument. 
\end{proof}

\subsubsection*{Characteristic components} Let $\Lambda=kQ/I$ be a gentle algebra of finite global dimension.

\begin{defn}\label{D:Char}
A \emph{characteristic component} (CC) of $\Lambda$ is a connected component $\cc$ of the Auslander-Reiten quiver of $\cd^b(\Lambda)$ such that
\begin{itemize}
\item[(i)] $\cc$ has a boundary, i.e. $\cc$ contains an Auslander-Reiten triangle $\tau X \to Y  \to X \to \nu X$ such that $Y$ is indecomposable.
\item[(ii)] indecomposable objects in $\cc$ are mapped to string modules over the repetitive algebra $\widehat{\Lambda}$ under Happel's equivalence $\cd^b(\Lambda) \cong \widehat{\Lambda}-\ul{\mod}$ \cite{Happelbook}.
\end{itemize}
\end{defn} 

\noindent
 Building on \cite{GeissPena} and \cite{ButlerRingel}, characteristic components are classified in \cite{AG}.

\begin{prop}\label{P:CCC}
The following translation quivers occur as characteristic components
\begin{itemize}
\item[(i)] $\ZZ\AA_n$ for some $n \in \ZZ_{\geq 1}$.
\item[(ii)] $\ZZ\AA_\infty$.
\item[(iii)] $\ZZ\AA_\infty/\tau^r$ for some $r \in \ZZ_{\geq 1}$. 
\end{itemize}
Conversely, every AR-component $\cc \neq \ZZ\AA_\infty/\tau$ of $\cd^b(\Lambda)$ which appears in (i) - (iii) is a CC.
\end{prop}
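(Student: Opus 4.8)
The strategy is to transport the statement to the stable module category of the repetitive algebra via Happel's equivalence $\cd^b(\Lambda) \cong \widehat{\Lambda}-\ul{\mod}$ \cite{Happelbook}, which is available because $\gldim \Lambda < \infty$. This equivalence is triangulated, so it matches Auslander--Reiten triangles with Auslander--Reiten sequences and hence identifies the Auslander--Reiten quiver of $\cd^b(\Lambda)$ with the stable Auslander--Reiten quiver of $\widehat{\Lambda}$. Since $\Lambda$ is gentle, $\widehat{\Lambda}$ is special biserial, so every indecomposable non-projective $\widehat{\Lambda}$-module is a string module or a band module, and the shapes of the stable components are governed by \cite{ButlerRingel} (building on \cite{GeissPena}). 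Two inputs from there are needed: the components containing band modules are exactly the homogeneous tubes $\ZZ\AA_\infty/\tau$, and every Auslander--Reiten sequence all of whose terms are string modules has at most two indecomposable middle terms. Under Happel's equivalence, condition (ii) of Definition \ref{D:Char} picks out precisely the components whose indecomposables are string modules, i.e.\ every component except a homogeneous tube.

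For the converse, let $\cc$ be an Auslander--Reiten component of $\cd^b(\Lambda)$ of one of the shapes (i)--(iii) with $\cc \neq \ZZ\AA_\infty/\tau$. Since the band-module components are precisely the homogeneous tubes, every indecomposable object of $\cc$ is a string module, so (ii) holds. Each of $\ZZ\AA_n$, $\ZZ\AA_\infty$ and $\ZZ\AA_\infty/\tau^r$ contains a vertex with a unique immediate predecessor --- an end of $\AA_n$, the end of $\AA_\infty$, or a vertex on the mouth of the tube --- and at such a vertex the Auslander--Reiten triangle has indecomposable middle term, so (i) holds; hence $\cc$ is a characteristic component. Note that $r=1$ is not ruled out by (i); it is excluded only because the homogeneous tube $\ZZ\AA_\infty/\tau$ consists of band modules, not string modules.

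For the forward direction, let $\cc$ be a characteristic component. By (ii) together with \cite{ButlerRingel}, every Auslander--Reiten sequence in $\cc$ has at most two indecomposable middle terms; equivalently, in the translation quiver $\cc$ every vertex has at most two immediate predecessors and, dually, at most two immediate successors, while by (i) some vertex has a single immediate predecessor. A connected stable translation quiver with these properties is, by the structure theory of stable translation quivers (Riedtmann's theorem and the Happel--Preiser--Ringel analysis of admissible automorphism groups, as carried out in \cite{AG}), isomorphic to $\ZZ\Delta/G$ with $\Delta = \AA_n$ or $\AA_\infty$ and $G$ admissible; this forces $\cc \cong \ZZ\AA_n$, $\cc \cong \ZZ\AA_\infty$, or $\cc \cong \ZZ\AA_\infty/\tau^r$. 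The remaining string-component shape $\ZZ\AA_\infty^\infty$ has no boundary and is excluded by (i). That every shape in (i)--(iii), with $r \geq 2$ in (iii), really occurs as a characteristic component of some gentle algebra of finite global dimension is verified by explicit computation in \cite{AG}; already the small gentle algebras of Proposition \ref{P:Notequiv} realise several of them. The main obstacle is this last structure-theoretic step: upgrading the purely local condition ``at most two immediate predecessors, with some vertex having only one'' to a global classification of $\cc$; this is where the stable-translation-quiver machinery is essential and where gentleness of $\Lambda$ --- making $\widehat{\Lambda}$ special biserial so that \cite{ButlerRingel} applies --- is used.
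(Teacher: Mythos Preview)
The paper does not give its own proof of this proposition; it is quoted from \cite{AG} (with \cite{GeissPena} and \cite{ButlerRingel} as background), so there is no argument in the paper to compare against. Your sketch is broadly the right one and is essentially the line of reasoning in \cite{AG}: transport via Happel's equivalence, use that $\widehat{\Lambda}$ is special biserial so indecomposables are strings or bands, and then invoke the Butler--Ringel description of Auslander--Reiten sequences for string modules together with the Riedtmann/Happel--Preiser--Ringel structure theory for stable translation quivers.

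There is one genuine slip. You assert that ``the components containing band modules are exactly the homogeneous tubes'' and later that the homogeneous tube ``consists of band modules, not string modules''. Only one direction is true: band components are homogeneous tubes, but not every homogeneous tube is a band component. Indeed the forward part of the proposition explicitly allows $r=1$ in (iii), so homogeneous tubes of string modules --- and hence homogeneous characteristic components --- do occur (the paper even alludes to this in the remark following Corollary \ref{C:AutoCC}). This is exactly why the converse is stated with the hypothesis $\cc \neq \ZZ\AA_\infty/\tau$: from the shape $\ZZ\AA_\infty/\tau$ alone one cannot decide whether the component consists of strings or bands, so condition (ii) of Definition \ref{D:Char} cannot be read off. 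Your converse argument only uses the correct direction (non-homogeneous $\Rightarrow$ not a band component $\Rightarrow$ string component), so the logic there survives; but your claim ``with $r \geq 2$ in (iii), really occurs'' and the accompanying explanation should be dropped.
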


In particular, derived autoequivalences act transitively on the boundary of a CC.

\begin{cor}\label{C:Auto}
If $X$ and $Y$ are objects in the boundary of a CC $\cc$ of $\Lambda$, then there exists a derived autoequivalence $\psi$ of $\cd^b(\Lambda)$ such that $\psi(X)\cong Y$.
\end{cor}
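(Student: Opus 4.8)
The statement to prove is Corollary \ref{C:Auto}: if $X$ and $Y$ lie in the boundary of a characteristic component $\cc$ of $\cd^b(\Lambda)$, then some derived autoequivalence $\psi$ of $\cd^b(\Lambda)$ carries $X$ to $Y$. The plan is to combine the structure theorem for characteristic components (Proposition \ref{P:CCC}) with the fact that the shift functor $[1]$ (and, in the relevant cases, powers of the Auslander--Reiten translation $\tau$, which coincides with the Serre functor composed with a shift and is hence a derived autoequivalence) already acts on the boundary, together with the transitivity of the resulting action. First I would recall that a characteristic component is, as a translation quiver, one of $\ZZ\AA_n$, $\ZZ\AA_\infty$, or $\ZZ\AA_\infty/\tau^r$, and in each of these the ``boundary'' consists of a single $\tau$-orbit (the rank-one vertices, i.e. those $X$ whose AR-triangle $\tau X \to Y \to X \to \tau X[1]$ has indecomposable middle term $Y$). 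So the problem reduces to: the group of derived autoequivalences acts transitively on that single $\tau$-orbit of boundary vertices.

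The key steps, in order, are as follows. \emph{Step 1:} Identify the boundary of $\cc$ with a single $\tau$-orbit $\{\tau^m Z : m \in \ZZ\}$ (or its image in $\ZZ\AA_\infty/\tau^r$). This is immediate from the shape of the translation quivers in Proposition \ref{P:CCC}: in $\ZZ\AA_n$ and $\ZZ\AA_\infty$ the vertices with indecomposable AR-middle-term are exactly those at one end of the $\AA$-direction, forming one $\ZZ$-indexed $\tau$-orbit; modding out by $\tau^r$ does not change this. \emph{Step 2:} Observe that $\tau = \nu[-1]$ on $\cd^b(\Lambda)$, where $\nu$ is the Serre functor (Happel's theorem, since $\Lambda$ has finite global dimension), so $\tau$ is a triangle autoequivalence of $\cd^b(\Lambda)$; it permutes AR-triangles and hence maps the boundary of $\cc$ to the boundary of a component, which must again be $\cc$ because $\tau$ fixes $\cc$ (an AR-component is $\tau$-stable by definition of the translation). \emph{Step 3:} Conclude that $\langle \tau \rangle$ already acts transitively on the boundary $\tau$-orbit: given boundary vertices $X = \tau^a Z$ and $Y = \tau^b Z$, take $\psi = \tau^{b-a}$. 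In the quotient case $\ZZ\AA_\infty/\tau^r$ one additionally notes that $\tau^r$ acts trivially on the component but $\tau$ still generates the finite cyclic action on the $r$ boundary vertices, so the same argument gives $\psi$.

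\emph{Expected main obstacle.} The delicate point is Step 1 together with the claim in Step 2 that $\tau$ genuinely preserves the component $\cc$ and its boundary as opposed to merely mapping it to an abstractly isomorphic component. This needs the fact --- part of the classification of \cite{AG}, already invoked in Proposition \ref{P:CCC} and the surrounding discussion --- that the AR-quiver of $\cd^b(\Lambda)$ has its components stable under $\tau$ and that string modules (in the sense of Definition \ref{D:Char}(ii), via Happel's equivalence with $\widehat{\Lambda}\text{-}\ul{\mod}$) are mapped to string modules by the self-equivalences in play, so that the ``characteristic'' property is preserved. Once that stability is in hand, the transitivity is purely combinatorial on the translation quiver. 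A secondary point worth a sentence is the case $n=1$ (or small $r$), where the boundary orbit may be very short or the whole component is thin, but there the statement is trivial or follows a fortiori. I do not expect to need any properties of $\Lambda$ beyond finite global dimension and gentleness, both of which are standing hypotheses on $\Lambda$ in this subsection.
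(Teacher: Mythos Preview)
Your overall strategy---use the classification of Proposition \ref{P:CCC} and then argue combinatorially on the translation quiver using autoequivalences built from $\tau$ and $[1]$---is exactly the paper's approach. However, your Step 1 contains an error that creates a genuine gap in the $\ZZ\AA_n$ case.

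You assert that the boundary of $\cc$ is always a \emph{single} $\tau$-orbit. This is correct for $\ZZ\AA_\infty$ and $\ZZ\AA_\infty/\tau^r$ (there is only one ``end''), but for $\ZZ\AA_n$ with $n\geq 2$ the boundary consists of \emph{two} $\tau$-orbits, namely the vertices at level $1$ and those at level $n$. Your Step 3 then fails: if $X$ lies in one boundary orbit and $Y$ in the other, no power of $\tau$ alone carries $X$ to $Y$. The paper closes this gap by observing that in the $\ZZ\AA_n$ case the shift functor $[1]$ exchanges the two boundary orbits, so one takes $\psi=\tau^t$ or $\psi=\tau^t[1]$ as needed. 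You mention $[1]$ in your opening sentence but never invoke it in the argument proper.

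A smaller remark: your ``expected main obstacle''---whether $\tau$ preserves $\cc$---is not an obstacle at all, since $\tau$ is by definition the translation on the AR-quiver and hence preserves every connected component. The genuinely nontrivial stability statement is that $[1]$ preserves a $\ZZ\AA_n$ characteristic component and swaps its two boundaries; this is the input from \cite{AG} that you actually need and should flag in place of the $\tau$-stability.
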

\begin{proof}
If $\cc$ is of type $\ZZ\AA_\infty$ or $\ZZ\AA_\infty/\tau^r$, then there is a unique boundary component. By definition the AR-translation $\tau$ acts transitively on it. Therefore, we can take $\psi=\tau^t$ for some $t \in \ZZ$.

If $\cc$ is of type $\ZZ\AA_n$, then there are two boundary components. The AR-translation acts transitively on each of these components and the shift functor $[1]$ identifies the two components. So either $\psi=\tau^t [1]$ or $\psi=\tau^t$  for some $t \in \ZZ$ will identify $X$ and $Y$ in this case.  
\end{proof}

We call a CC $\cc$ \emph{homogeneous} if $\cc=\ZZ\AA_\infty/\tau$ and  \emph{non-homogeneous} otherwise.

\begin{cor}\label{C:AutoCC}
Let $\Lambda, \Lambda'$ be gentle algebras of finite global dimension. If $\psi\colon \cd^b(\Lambda) \to \cd^b(\Lambda')$ is a derived equivalence and $\cc$ is a non-homogeneous CC of $\Lambda$, then $\psi(\cc)$ is a non-homogeneous CC of $\Lambda'$. Moreover, $\psi$ maps objects in the boundary of $\cc$ to the boundary of $\psi(\cc)$. 
\end{cor}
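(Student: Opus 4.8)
The plan is to reduce the statement to the combinatorial classification of characteristic components in Proposition \ref{P:CCC} together with the fact that a derived equivalence $\psi$ induces an equivalence of the corresponding Auslander--Reiten quivers (both $\Lambda$ and $\Lambda'$ are gentle of finite global dimension, so $\cd^b(\Lambda)$ and $\cd^b(\Lambda')$ both have Auslander--Reiten triangles by Happel's theory, and $\psi$ preserves them). Thus $\psi$ maps any connected component of the AR-quiver of $\cd^b(\Lambda)$ to a connected component of the AR-quiver of $\cd^b(\Lambda')$, and it preserves all purely categorical data attached to that component.

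First I would observe that a CC is characterised intrinsically (up to the homogeneity issue) among AR-components by the shape of its translation quiver, by Proposition \ref{P:CCC}: a non-homogeneous CC is one of $\ZZ\AA_n$ ($n\geq 1$), $\ZZ\AA_\infty$, or $\ZZ\AA_\infty/\tau^r$ with $r\neq 1$, and conversely every AR-component of these shapes (other than $\ZZ\AA_\infty/\tau$) is a CC. Since $\psi$ is an equivalence of triangulated categories commuting with the shift and with AR-triangles, it induces an isomorphism of translation quivers from $\cc$ onto $\psi(\cc)$. Hence $\psi(\cc)$ is an AR-component of $\cd^b(\Lambda')$ whose translation-quiver shape is one of $\ZZ\AA_n$, $\ZZ\AA_\infty$, $\ZZ\AA_\infty/\tau^r$; applying the converse direction of Proposition \ref{P:CCC} to $\Lambda'$, $\psi(\cc)$ is a CC of $\Lambda'$, and it is non-homogeneous because its shape is not $\ZZ\AA_\infty/\tau$ (the shape being a quiver-theoretic invariant preserved by $\psi$).

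Next I would treat the boundary. The boundary of a CC is defined in Definition \ref{D:Char}(i) by the existence of an AR-triangle $\tau X \to Y \to X \to \nu X$ with $Y$ indecomposable; equivalently, in terms of the translation quiver, $X$ lies on the boundary row (so has a single irreducible arrow into it, up to the translation structure). This is again an intrinsic property of the position of $X$ in the translation quiver of $\cc$, hence is transported by the isomorphism of translation quivers induced by $\psi$ onto the boundary of $\psi(\cc)$. Therefore $\psi$ sends objects in the boundary of $\cc$ to objects in the boundary of $\psi(\cc)$.

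The main obstacle I anticipate is not any one of these steps individually but making precise that $\psi$ genuinely induces an isomorphism of translation quivers — i.e.\ that a triangle equivalence between derived categories of finite-dimensional algebras with AR-triangles preserves AR-triangles, irreducible morphisms, and the AR-translation (which here is $\nu[-1]$, intrinsic to the triangulated structure when the categories are Hom-finite with Serre functor, as is the case for gentle algebras of finite global dimension). Once that is granted, the whole argument is a transport-of-structure along $\psi$ combined with the two directions of Proposition \ref{P:CCC}; no computation with the specific algebras is needed.
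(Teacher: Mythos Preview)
Your proposal is correct and follows essentially the same approach as the paper's proof. The paper argues more tersely: it notes that a triangle equivalence commutes with Serre functors and shifts, hence with $\tau=\nu\circ[-1]$, so $\psi$ sends AR-components to AR-components and boundary objects to boundary objects; then it invokes the converse direction of Proposition~\ref{P:CCC} exactly as you do. What you flag as the ``main obstacle'' (preservation of AR-structure via the intrinsic description $\tau=\nu[-1]$) is precisely the paper's opening line, so there is no genuine gap---only a difference in emphasis.
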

\begin{proof}
Since $\psi$ is a triangle equivalence it commutes with Serre-functors $\nu$ and shift functors $[1]$. In particular, it commutes with the AR-translations $\tau=\nu \circ [-1]$. Therefore, $\psi$ maps AR-components to AR-components and boundary objects to boundary objects. Finally, by assumption $\cc \neq \ZZ\AA_\infty/\tau$ appears in (i) - (iii) of Proposition \ref{P:CCC}, thus $\psi(\cc) \neq \ZZ\AA_\infty/\tau$ occurs in this list as well. So $\psi(\cc)$ is a CC by Proposition \ref{P:CCC}. 
\end{proof}

\begin{rem}
In general, derived autoequivalences can identify homogeneous CCs with non-characteristic components. For example, this happens for the Kronecker quiver.
\end{rem}

\subsubsection*{The AG-invariant} One can check\footnote{By Corollary \ref{C:AutoCC}, it remains to check the homogeneous CCs.} that the shift functor (on the stable category of the repetitive algebra this is given by the inverse syzygy functor) acts on characteristic components. Avella-Alaminos \& Gei{\ss} \cite{AG} describe the orbits under this action: there are finitely many orbits and the corresponding triangulated subcategories are fractionally Calabi-Yau. More precisely, there is an algorithm \cite[Section 3]{AG} with output a formal sum of pairs of integers $n_i, m_i \in \ZZ_{\geq 0}$ 
\begin{align}
\phi_\Lambda=[n_1, m_1] + \ldots + [n_t, m_t]
\end{align}
obtained from counting certain walks in the gentle quiver with relations defining $\Lambda$\footnote{Repetitions $[n_i, m_i]= [n_j, m_j]$ for $i \neq j$ may occur. This notation differs from the one used in \cite{AG} but contains the same information.}. The next result gives a categorical interpretation of this combinatorial information, see \cite[Theorem 16]{AG}.

\begin{prop}\label{P:CatInter}
The summands $[n_i, m_i]$ of $\phi_\Lambda$ are in bijection with the $[1]$-orbits of the CCs of $\Lambda$. Moreover, every object $X$ in a component of the orbit corresponding to $[n_i, m_i]$ satisfies
\begin{align}\label{E:frac}
\nu^{n_i}X \cong X[m_i]
\end{align} 
where $\nu$ denotes the Nakayama=Serre functor of $\cd^b(\Lambda)$. In other words, $X$ is $\frac{m_i}{n_i}$-fractionally CY.
\end{prop}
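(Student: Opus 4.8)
The plan is to deduce both halves of the statement from Avella-Alaminos \& Gei{\ss}'s \cite[Theorem 16]{AG}, the only extra ingredient being a dictionary between our formal-sum bookkeeping of the invariant and the function-valued invariant of \cite{AG}. Recall the shape of the AG-algorithm of \cite[Section 3]{AG}: for $\Lambda = kQ/I$ gentle one alternately traverses maximal \emph{permitted threads} -- maximal paths none of whose length-two subpaths lies in $I$ -- and maximal \emph{forbidden threads} -- maximal paths all of whose length-two subpaths lie in $I$ -- with suitable formal threads of length zero inserted at sources and sinks; because $\Lambda$ is gentle these fit together into a finite, uniquely determined family of closed walks, and to each such walk the algorithm attaches a pair $(n_i, m_i) \in \ZZ_{\geq 0}^2$, with $n_i$ the common number of permitted and of forbidden threads in the walk and $m_i$ the accompanying length count. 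Our formal sum $\phi_\Lambda = \sum_i [n_i, m_i]$ is then nothing but the function $\phi_\Lambda\colon \ZZ_{\geq 0}^2 \to \ZZ_{\geq 0}$ of \cite{AG} recorded additively with multiplicities; this is the reconciliation referred to in the footnote, and no information is gained or lost.

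For the bijection I would combine Proposition \ref{P:CCC} with \cite[Theorem 16]{AG}. Since $[1]$ is a triangle autoequivalence of $\cd^b(\Lambda)$ it permutes the Auslander-Reiten components and hence acts on the set of characteristic components, which Proposition \ref{P:CCC} pins down among all AR components; \cite[Theorem 16]{AG} then matches the $[1]$-orbits of CCs one-to-one with the closed walks of the algorithm -- under Happel's equivalence $\cd^b(\Lambda) \cong \widehat{\Lambda}-\ul{\mod}$ and the string combinatorics of \cite{GeissPena, ButlerRingel}, each closed walk is the combinatorial trace of exactly one such orbit. Composing with the dictionary of the previous paragraph yields the asserted bijection between the summands $[n_i, m_i]$ of $\phi_\Lambda$ and the $[1]$-orbits of CCs of $\Lambda$.

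Finally, for the fractional Calabi-Yau assertion \eqref{E:frac} I would work on a single orbit. An object $X$ in a component $\cc$ of the orbit labelled $[n_i, m_i]$ corresponds, under Happel's equivalence, to a string module over the repetitive algebra $\widehat{\Lambda}$, and $\nu = \tau[1]$ corresponds to the composite of the Nakayama automorphism of $\widehat{\Lambda}$ with the cosyzygy functor $\Omega^{-1}$; the explicit action of this composite on the string parametrising $X$, read off in \cite{AG}, shows that $n_i$ applications of $\nu$ return the string to itself translated by $m_i$, i.e. $\nu^{n_i} X \cong X[m_i]$. Since $\nu$ and $[1]$ are autoequivalences this identity propagates along $\cc$ and then, by $[1]$-invariance, over the whole orbit, which is exactly the statement that such $X$ is $\frac{m_i}{n_i}$-fractionally CY. The one point that genuinely needs care -- and the part I expect to be the main obstacle -- is verifying that the two integers produced by the AG-algorithm really are the $\nu$-period $n_i$ and the accompanying shift $m_i$ of the associated orbit, rather than some other pair of combinatorial invariants; this matching is the substance of \cite[Theorem 16]{AG}, so the argument ultimately reduces to a careful citation of that theorem together with the notational translation set up above.
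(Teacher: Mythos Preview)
Your proposal is correct and, in fact, more detailed than what the paper does: the paper gives no proof at all for this proposition, simply attributing it to \cite[Theorem 16]{AG} in the sentence preceding the statement. Your write-up amounts to an expanded gloss on that citation---explaining the notational dictionary between the formal-sum and function-valued versions of $\phi_\Lambda$, and sketching why the bijection and the fractional Calabi--Yau identity follow from the AG analysis---which is entirely appropriate but goes beyond what the paper itself supplies.
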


\begin{cor} \label{L:homo}
If there exists a homogeneous CC of $\Lambda$, then $[1, 1]$ is a summand of $\phi_\Lambda$. 
\end{cor}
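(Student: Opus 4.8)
The plan is to deduce the statement directly from Proposition~\ref{P:CatInter}, by identifying a homogeneous characteristic component as the subcategory carrying the tightest possible fractional Calabi--Yau invariant. Let $\cc$ be a homogeneous CC of $\Lambda$, so that $\cc \cong \ZZ\AA_\infty/\tau$ as a translation quiver. By Proposition~\ref{P:CatInter} the $[1]$-orbit of $\cc$ corresponds to a summand $[n,m]$ of $\phi_\Lambda$, and every indecomposable object $X$ of $\cc$ satisfies the relation \eqref{E:frac}, i.e.\ $\nu^{n}X \cong X[m]$, where $\nu$ is the Serre functor of $\cd^b(\Lambda)$. It therefore suffices to prove that $[n,m]=[1,1]$.

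First I would exploit the very special shape of a homogeneous component: in $\ZZ\AA_\infty/\tau$ the AR-translation acts as the identity on objects, so $\tau X \cong X$ for every indecomposable $X$ in $\cc$. Since $\tau X \cong (\nu X)[-1]$ (the identity $\tau = \nu\circ[-1]$ was already used in the proofs of Corollaries~\ref{C:Auto} and \ref{C:AutoCC}), this says exactly that $\nu X \cong X[1]$ for every such $X$; iterating, $\nu^{a}X\cong X[a]$ for all $a\in\ZZ$. Comparing this with $\nu^{n}X\cong X[m]$ yields $X[n-m]\cong X$, and comparing the top non-vanishing cohomology of the nonzero object $X$ forces $n=m$. (Equivalently, the fractional Calabi--Yau ratio $m/n$ of $X$ is well defined and equal to $1$.) So the summand in question has the form $[k,k]$ for some $k\geq 1$.

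It remains to show $k=1$, and here I would invoke one input beyond the categorical statement quoted in Proposition~\ref{P:CatInter}, namely that the pair $[n_i,m_i]$ which the Avella-Alaminos--Gei{\ss} algorithm attaches to a $[1]$-orbit is the \emph{primitive} one: $n_i$ is the least positive integer admitting a fractional Calabi--Yau relation $\nu^{n_i}X\cong X[\,\cdot\,]$ on that orbit (a cyclic walk in the gentle presentation is traversed once, not as a repetition), which is built into the construction in \cite[Section~3]{AG} underlying Proposition~\ref{P:CatInter}. Since $\nu^{1}X\cong X[1]$ already holds on $\cc$, minimality of $n_i$ gives $n_i=1$, and then $X[1]\cong\nu X\cong X[m_i]$ forces $m_i=1$ by the same cohomology comparison, so $[1,1]$ is a summand of $\phi_\Lambda$. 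The main (and essentially only) obstacle is this last step: the form of Proposition~\ref{P:CatInter} as stated yields only a summand $[k,k]$, and to exclude $k\geq 2$ one has to appeal to the normalisation of the AG-pair that is intrinsic to the algorithm producing $\phi_\Lambda$.
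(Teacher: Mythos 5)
Your argument is correct and follows essentially the same route as the paper's one-line proof: on a homogeneous component $\tau X \cong X$ together with $\tau = \nu \circ [-1]$ gives $\nu X \cong X[1]$, and Proposition \ref{P:CatInter} then identifies the corresponding summand of $\phi_\Lambda$. Your extra care in excluding a summand $[k,k]$ with $k \geq 2$ — which indeed requires the normalisation built into the Avella-Alaminos--Gei{\ss} algorithm rather than only the literal statement of Proposition \ref{P:CatInter} — fills in a detail that the paper's proof leaves implicit.
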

\begin{proof}
Since $\tau=\nu \circ [-1]$ and by definition $\tau X \cong X$ for objects $X$ in homogeneous CCs, the statement follows from Proposition \ref{P:CatInter}.
\end{proof}

\noindent
Using this interpretation of $\phi_\Lambda$, Avella-Alaminos \& Gei{\ss} obtain their main result \cite[Theorem A]{AG}.
\begin{thm}
Let $A$ and $B$ be derived equivalent gentle algebras, then $\phi_A=\phi_B$. 
\end{thm}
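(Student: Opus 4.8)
The plan is to extract the AG-invariant $\phi_\Lambda$ from intrinsic categorical data of $\cd^b(\Lambda)$ and then invoke Proposition \ref{P:CatInter} to see it is preserved by any triangle equivalence. Concretely, given a triangle equivalence $\psi\colon \cd^b(A) \to \cd^b(B)$, I would first argue that $\psi$ induces a bijection between the $[1]$-orbits of non-homogeneous characteristic components of $A$ and those of $B$. This is exactly the content of Corollary \ref{C:AutoCC}: $\psi$ commutes with $\nu$ and $[1]$, hence with $\tau$, so it carries AR-components to AR-components and boundary objects to boundary objects, and it carries components of type $\ZZ\AA_n$, $\ZZ\AA_\infty$, $\ZZ\AA_\infty/\tau^r$ (with $r\neq 1$) to components of the same shape, which by Proposition \ref{P:CCC} are again characteristic. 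Since $\psi$ commutes with $[1]$ it respects the $[1]$-orbit structure, so it matches up the non-homogeneous summands of $\phi_A$ with those of $\phi_B$, and by Proposition \ref{P:CatInter} each matched pair has the same fractional Calabi--Yau dimension $m_i/n_i$ — indeed the same $(n_i,m_i)$, since for an object $X$ in the orbit, $n_i$ is the minimal positive integer with $\nu^{n_i} X \cong X[m_i]$ for some $m_i$, and this is a categorical invariant of $X$ preserved by $\psi$.

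Next I would handle the homogeneous components, i.e. the summands equal to $[1,1]$. By Corollary \ref{L:homo}, the presence of a homogeneous CC forces a $[1,1]$ summand; conversely one checks (this is implicit in \cite{AG}) that all $[1,1]$ summands of $\phi_\Lambda$ come from homogeneous CCs or from the overall bookkeeping of the algorithm. The cleanest route is to compare total counts: the number of summands of $\phi_\Lambda$ equals the number of $[1]$-orbits of CCs (Proposition \ref{P:CatInter}), and this total number is a derived invariant because $\psi$ permutes all CCs — non-homogeneous ones going to non-homogeneous ones by Corollary \ref{C:AutoCC}, and the homogeneous ones being exactly the remaining CCs, namely those of type $\ZZ\AA_\infty/\tau$, a condition ($\tau X \cong X$ on the component) that $\psi$ preserves. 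So $\psi$ induces a bijection on the full set of $[1]$-orbits of CCs, matching non-homogeneous orbits with equal $(n_i,m_i)$, and therefore the remaining orbits — all contributing $[1,1]$ — must also match in number. Hence $\phi_A = \phi_B$ as formal sums.

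The main obstacle is the subtle point about homogeneous components and the precise correspondence between summands $[n_i,m_i]$ of the combinatorial invariant and $[1]$-orbits of CCs: one must be careful that the algorithm of \cite[Section 3]{AG} does not produce spurious $[1,1]$ summands unaccounted for by actual characteristic components, and that the homogeneous CCs are genuinely characterised by the intrinsic condition $\cc \cong \ZZ\AA_\infty/\tau$ rather than by some extra gentle-specific data that a triangle equivalence might not see. Here I would lean on Proposition \ref{P:CatInter}, which is stated as giving a \emph{bijection} between the summands and the $[1]$-orbits of CCs, so this bookkeeping is already packaged for us; the proof then reduces to the purely categorical observations above. A secondary, more routine, obstacle is confirming that $\psi$ commutes with the Nakayama functor $\nu$: this holds because $\nu$ is (up to the equivalence $\cd^b(\Lambda)\cong K^b(\proj\,\Lambda)$) the Serre functor, which is unique and hence preserved by any triangle equivalence between categories admitting one — both $\cd^b(A)$ and $\cd^b(B)$ do, since $A$ and $B$ have finite global dimension.
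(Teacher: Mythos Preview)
The paper does not actually prove this theorem; it merely records it as \cite[Theorem A]{AG} after sketching the categorical interpretation (Proposition~\ref{P:CatInter}) that Avella-Alaminos and Gei{\ss} use. So there is no in-paper proof to compare against, only the surrounding commentary --- and that commentary is precisely what undermines your argument.

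Your treatment of the non-homogeneous characteristic components is fine and is indeed the easy part: Corollary~\ref{C:AutoCC} says exactly that a triangle equivalence sends non-homogeneous CCs to non-homogeneous CCs, and Proposition~\ref{P:CatInter} then matches up the corresponding summands $[n_i,m_i]$ with $n_i\neq m_i$. The gap is in your handling of the homogeneous case. You claim that $\psi$ ``permutes all CCs'' because the homogeneous ones are ``exactly the remaining CCs, namely those of type $\ZZ\AA_\infty/\tau$, a condition ($\tau X\cong X$ on the component) that $\psi$ preserves.'' But being of type $\ZZ\AA_\infty/\tau$ is \emph{not} the same as being a characteristic component: Definition~\ref{D:Char}(ii) also requires that the component consist of string modules, and band modules produce homogeneous tubes which are not CCs. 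The paper's Remark immediately after Corollary~\ref{C:AutoCC} says this explicitly: ``In general, derived autoequivalences can identify homogeneous CCs with non-characteristic components. For example, this happens for the Kronecker quiver.'' So $\psi$ need not carry the set of homogeneous CCs of $A$ to the set of homogeneous CCs of $B$, and your claimed bijection on $[1]$-orbits of CCs collapses. Leaning on Proposition~\ref{P:CatInter} does not help: that proposition is a statement about a single algebra, not about how a triangle equivalence interacts with the string/band dichotomy.

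In short, the missing idea is an \emph{independent} argument that the number of $[1,1]$ summands is a derived invariant --- one that does not go through identifying individual homogeneous CCs across the equivalence. This is what \cite{AG} supplies, and it is not recoverable from the categorical observations you list.
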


\noindent
Therefore, we call $\phi_\Lambda$ the \emph{AG-invariant} of $\Lambda$.

\subsubsection*{Boundaries of CCs, permitted threads and modifications of derived equivalences}

The objects in the boundary of CCs are classified, see e.g. \cite[Section 2.3]{AG}. For our purposes it is enough to understand which $\Lambda$-modules are in the boundary. We need the following definition.

\begin{defn} \label{D:thread}
Let $A=kQ/I$ be a gentle algebra. A \emph{non-trivial
 permitted thread} of $A$ is a maximal path $p$ in $(Q, I)$, i.e. $p$ is not contained in $I$ but any path in $Q$ with subpath $p$ is contained in $I$. A \emph{trivial permitted thread} is a trivial path $1_v$ where $v$ is a vertex in $Q$ such that 
 \begin{itemize}
 \item[(a)] there is at most one arrow $\alpha$ ending in $v$.
 \item[(b)] there is at most one arrow $\beta$ starting in $v$.
 \item[(c)] if both arrows $\alpha$ and $\beta$ in (a) and (b) exist, then $\beta\alpha \notin I$.
 \end{itemize}
 A \emph{permitted thread} is a trivial or non-trivial permitted thread.
\end{defn}

Building on \cite{ButlerRingel, WW}, Avella-Alaminos \& Gei{\ss} \cite{AG} show the following result, cf. also \cite{GeissPena}.

\begin{prop}\label{P:Boundary}
Let $p$ be a permitted thread of $\Lambda$. Then the corresponding string module $M(p)$ is contained in the boundary of a CC of $\Lambda$.
\end{prop}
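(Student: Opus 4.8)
The plan is to show that every permitted thread $p$ of $\Lambda$ gives rise to a string module $M(p)$ whose position in Happel's stable equivalence $\cd^b(\Lambda)\cong\widehat\Lambda\text{-}\ul\mod$ lands on the boundary of an Auslander--Reiten component of the type appearing in Proposition \ref{P:CCC}. First I would recall, following Schr\"oer \cite{Schroer} and Ringel \cite{Ringel}, that since $\Lambda$ is gentle the repetitive algebra $\widehat\Lambda$ is special biserial, so its stable module category is described completely in terms of strings and bands, and Happel's equivalence sends $\Lambda$-modules to certain string modules over $\widehat\Lambda$. The permitted threads of $\Lambda$ are exactly the combinatorial data that record, for the trivial case, the sources of string modules with a one-sided infinite $\tau$-orbit, and for the non-trivial case, the ``maximal'' strings supported on a single copy of $\Lambda$ inside $\widehat\Lambda$; in either case the associated string has a special shape at (at least) one end.

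The key point is a general principle for special biserial (string) algebras: a string module $M(w)$ is \emph{not} on the boundary of its AR-component precisely when the string $w$ can be extended on both ends by adding or removing hooks and cohooks in the sense of Butler--Ringel \cite{ButlerRingel}, equivalently when the almost split sequence ending at $M(w)$ has decomposable middle term; and $M(w)$ \emph{is} on the boundary exactly when one of these extensions fails, which happens when one end of $w$ is ``blocked'' by a relation or by the end of the quiver. So the core step is to translate the combinatorial definition of a permitted thread (Definition \ref{D:thread}) into precisely this blocking condition at one end of the corresponding string over $\widehat\Lambda$. For a non-trivial permitted thread $p$, maximality of $p$ as a path (not in $I$, but every proper extension is in $I$) means that on the $\widehat\Lambda$-side the string for $M(p)$ cannot be prolonged by a direct letter without hitting a relation, which is exactly the boundary condition; one checks the hook/cohook combinatorics of Butler--Ringel to see that the AR-triangle ending at $M(p)$ has indecomposable middle term, so $M(p)$ lies on the boundary. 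For a trivial permitted thread $1_v$, conditions (a), (b), (c) of Definition \ref{D:thread} say that $v$ has at most one incoming and at most one outgoing arrow and that their composite is not a relation, which again pins down that the trivial string at $v$ sits at an endpoint of the corresponding translation quiver. This is essentially the content extracted from \cite[Section 2.3]{AG} (building on \cite{ButlerRingel, WW}), so the proof would be to cite Proposition \ref{P:CCC} to guarantee the component has a boundary at all, and then to invoke the string-combinatorial classification of boundary modules from \cite{AG} together with the identification of permitted threads with the blocked-end strings.

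The main obstacle — and the reason this is stated as a proposition to cite rather than proved from scratch — is the bookkeeping needed to pass through Happel's equivalence $\cd^b(\Lambda)\cong\widehat\Lambda\text{-}\ul\mod$ and keep track of which shifts/degrees the string $M(p)$ acquires over the repetitive algebra, since $\widehat\Lambda$ has infinitely many vertices (a $\ZZ$-indexed chain of copies of the quiver of $\Lambda$ glued by the relevant bimodule) and a permitted thread of $\Lambda$ determines a string there only up to the $\ZZ$-translation. Concretely, the hard part is verifying that the blocking at the ``$p$-end'' of the string is genuinely a blocking in $\widehat\Lambda$ and not merely at the boundary between two copies of $\Lambda$ — one must use that gentleness of $\Lambda$ forces $\widehat\Lambda$ to be special biserial (the remark after Definition \ref{D:Gentle}) so that the hook/cohook calculus applies uniformly across the gluing. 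Once that is in place, the statement follows by the case analysis above, and in the paper it is legitimate to attribute it to Avella-Alaminos--Gei\ss\ \cite{AG} (with the antecedents \cite{ButlerRingel, WW, GeissPena}) rather than redo the string combinatorics in detail.
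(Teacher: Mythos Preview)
Your proposal is correct and aligns with the paper's treatment: the paper does not prove this proposition but attributes it to Avella-Alaminos \& Gei{\ss} \cite{AG}, building on \cite{ButlerRingel, WW} and cf.\ \cite{GeissPena}. Your sketch of the underlying string-combinatorial argument (boundary via indecomposable middle term in the AR-triangle, detected by the hook/cohook blocking at an end of the string) and your conclusion that the result should be cited rather than reproved both match the paper exactly.
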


For simplicity, in the rest of this subsection, we restrict ourselves to the case where the AG-invariant has only one summand. More generally, one could work with an orbit of CCs which corresponds to a summand $[n_i, m_i]$ such that $[n_i, m_i] \neq [n_j, m_j]$ for all $j\neq i$.

\begin{Setup}\label{S:DE}
Let $A$ and $B$ be derived equivalent gentle algebras such that $\phi_B=[n, m]$.
\end{Setup}

\begin{rem} \label{R:gldim}
It follows that $B=kQ/I$ (and therefore also $A$) has finite global dimension. 

Indeed if $\gldim B=\infty$ then there is a cyclic path with full relations in $(Q, I)$, see e.g. \cite{KalckSing}. By \cite[Remark 6]{AG}, this shows that $\phi_B$ has a summand of the form $[0, l(c)]$, where $l(c)$ is the length of $c$. By our assumption this would imply $\phi_B=[0, l(c)]$, but the algorithm producing $\phi_B$ shows that there is always at least one summand $[n_i, m_i]$ with $n_i \neq 0$. Contradiction. So $B$ has finite global dimension. In particular, Happel's equivalence $\cd^b(B) \cong \widehat{B}-\ul{\mod}$ holds and we can use all the statements listed before Setup \ref{S:DE}.
\end{rem}

\begin{lem}\label{L:Noteq}
In the notation of Setup \ref{S:DE}, we have $n\neq m$. \end{lem}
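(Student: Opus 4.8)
The plan is to argue by contradiction: assume $n=m$ and show that this forces every characteristic component of $B$ to be $\tau$-periodic, which is incompatible with $B$ being a non-semisimple gentle algebra of finite global dimension.

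First I would dispose of the degenerate cases. If $B$ is semisimple, then each simple module spans its own AR-component, on which $\nu\cong\mathbf 1$, so every summand of $\phi_B$ equals $[1,0]$; since $\phi_B=[n,m]$ has a single summand this forces $B\cong k$ and $[n,m]=[1,0]$, whence $n=1\neq 0=m$. So from now on $B$ is not semisimple; by Remark~\ref{R:gldim} it has finite global dimension, Happel's equivalence $\cd^b(B)\cong\widehat B\text{-}\ul{\mod}$ holds, and the results stated before Setup~\ref{S:DE} are available.

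Now suppose $n=m$. As $\phi_B=[n,m]$ has a single summand, Proposition~\ref{P:CatInter} gives that $B$ has exactly one $[1]$-orbit of characteristic components, and that every object $X$ lying in a characteristic component $\cc$ satisfies $\nu^n X\cong X[m]=X[n]$; since $\tau=\nu\circ[-1]$, this yields $\tau^n X\cong X$. Thus every characteristic component is $\tau$-periodic, so by Proposition~\ref{P:CCC} it is of the form $\ZZ\AA_\infty/\tau^r$ for some $r\geq1$ ($\tau$ acts freely with infinite orbits on $\ZZ\AA_\ell$ for finite $\ell$ and on $\ZZ\AA_\infty$, so neither admits $\tau$-periodic objects). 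Moreover, if some characteristic component is homogeneous, then Corollary~\ref{L:homo} forces $[1,1]$ to be a summand of $\phi_B$, hence $[n,m]=[1,1]$.

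It remains to contradict the statement that every characteristic component is $\tau$-periodic, and this is where I expect the real work to lie. The key observation I would use is that, since $B$ is gentle, the indecomposable projective $B$-modules $P_v$ are string modules, and every string-module AR-component of $\cd^b(B)$ satisfies (i) and (ii) of Definition~\ref{D:Char}, hence is a characteristic component; so each $P_v$ lies in some characteristic component $\cc_v\cong\ZZ\AA_\infty/\tau^{r_v}$ with $\tau^{r_v}P_v\cong P_v$. If $\cc_v$ is homogeneous ($r_v=1$), the Auslander--Reiten triangle at the mouth $\tau P_v\to E\to P_v\xrightarrow{\ \eta\ }\tau P_v[1]=P_v[1]$ has $\eta\neq0$, contradicting $\Hom_{\cd^b(B)}(P_v,P_v[1])=\Ext^1_B(P_v,P_v)=0$. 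This settles the case in which $B$ has a homogeneous characteristic component, which by the previous paragraph is precisely the case $n=m=1$. The remaining case — no homogeneous characteristic component, so all characteristic components are of type $\ZZ\AA_\infty/\tau^{r}$ with $r\geq2$ and $[n,m]=[n,n]$ — is the main obstacle. Here I would again place $P_v$ in such a $\cc_v$ and analyse the action of the shift on the unique orbit of characteristic components: if that orbit is finite of size $s$, then $[s]$ is a self-equivalence of $\cc_v$, hence acts as a power of $\tau$ (as $\mathrm{Aut}$ of the translation quiver $\ZZ\AA_\infty/\tau^{r}$ is cyclic, generated by the translation), forcing $P_v[rs]\cong P_v$, which is impossible since $\bigoplus_{j}\Hom_{\cd^b(B)}(P_v,P_v[j])$ is finite-dimensional. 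The delicate point — and what I expect to be the genuine difficulty — is ruling out that this $[1]$-orbit is infinite when $n=m$; I anticipate this follows either from a finer analysis of how $[1]$ permutes the $\tau$-periodic components when $\gldim B<\infty$, or, alternatively, from the combinatorial side: the Avella-Alaminos--Gei\ss\ algorithm should be shown to always produce a summand $[n_i,m_i]$ with $n_i\neq m_i$ for a gentle algebra of finite global dimension (in the spirit of the argument in Remark~\ref{R:gldim} producing a summand with $n_i\neq 0$), which would give $n\neq m$ at once.
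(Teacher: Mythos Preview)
Your argument has a genuine gap that you yourself flag: in the non-homogeneous case you do not rule out that the single $[1]$-orbit of characteristic components is infinite. Without this, the periodicity contradiction $P_v[rs]\cong P_v$ never fires, and nothing you have written excludes an infinite orbit of tubes $\ZZ\AA_\infty/\tau^r$ being permuted freely by $[1]$. Your fallback suggestion --- ``the Avella-Alaminos--Gei\ss\ algorithm should be shown to always produce a summand $[n_i,m_i]$ with $n_i\neq m_i$'' --- is in fact exactly what the paper does, and it is much shorter than the categorical route you attempt.

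The paper's proof is purely combinatorial. One invokes a counting identity (from \cite[Lemma 3.2]{BobinskiBuan}) relating the AG-invariant to the quiver: if $\phi_B=\sum_i[n_i,m_i]$ then $\sum_i n_i$ equals the number of vertices of $Q$ and $\sum_i m_i$ the number of arrows. Thus $\phi_B=[n,n]$ forces $|Q_0|=|Q_1|=n$; connectedness of $Q$ (otherwise $\phi_B$ would have at least two summands) then makes the underlying graph of $Q$ an $n$-cycle. But Avella-Alaminos and Gei\ss\ compute $\phi$ for all one-cycle gentle algebras in \cite[Section~7]{AG} and obtain exactly two summands, contradicting $\phi_B=[n,n]$. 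This bypasses entirely the analysis of tubes, shift-orbits, and positions of projectives that your approach requires.
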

\begin{proof}
Assume that $\phi_B=[n, n]$. It follows from \cite[Lemma 3.2]{BobinskiBuan} that the underlying quiver $Q$ of $B$ has $n$ vertices and $n$ arrows. Since $Q$ is connected (indeed otherwise $\phi_B$ has at least two summands), the unoriented graph underlying $Q$ is a cycle of length $n$. It follows from \cite[Section 7]{AG} that $\phi_B$ has exactly two summands. Contradiction. So we see that indeed $n\neq m$.
\end{proof}

In the situation of Setup \ref{S:DE}, the derived equivalence can be adapted such that it identifies given objects $X \in \cd^b(A)$, $Y \in \cd^b(B)$ contained in boundaries of CCs (cf. also \cite[Corollary 1.4]{Bobinski}).
\begin{lem}\label{L:Key}
In the notation of Setup \ref{S:DE}, let $p$ and $q$ be permitted threads of $A$  and $B$ respectively. Then there exists a derived equivalence $\psi\colon \cd^b(A) \to \cd^b(B)$ such that $\psi(M(p))\cong M(q)$.
\end{lem}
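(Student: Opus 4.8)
The plan is to combine the two facts established just above: first, that a derived equivalence between gentle algebras respects the combinatorial structure of characteristic components (Corollary \ref{C:AutoCC}) and, second, that derived autoequivalences act transitively on the boundary of a fixed characteristic component (Corollary \ref{C:Auto}). Since $p$ is a permitted thread of $A$, Proposition \ref{P:Boundary} gives that $M(p)$ lies in the boundary of some CC $\cc$ of $A$; likewise $M(q)$ lies in the boundary of some CC $\cd$ of $B$. Under the hypotheses of Setup \ref{S:DE} the AG-invariant has the single summand $[n,m]$, so by Proposition \ref{P:CatInter} there is a \emph{unique} $[1]$-orbit of CCs, and in particular $\cc$ is non-homogeneous: indeed Lemma \ref{L:Noteq} gives $n\neq m$, so by Corollary \ref{L:homo} there is no homogeneous CC at all. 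Hence all the relevant CCs are non-homogeneous and everything in sight is genuinely ``characteristic''.

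First I would start with an arbitrary derived equivalence $\Phi_0\colon \cd^b(A)\to\cd^b(B)$, which exists by the hypothesis of Setup \ref{S:DE}. By Corollary \ref{C:AutoCC}, $\Phi_0(\cc)$ is a non-homogeneous CC of $B$ and $\Phi_0$ maps the boundary of $\cc$ into the boundary of $\Phi_0(\cc)$; in particular $\Phi_0(M(p))$ is a boundary object of $\Phi_0(\cc)$. Now both $\cd = \Phi_0(\cc)$ and the component $\cd'$ containing $M(q)$ are CCs of $B$ belonging to the same (unique) $[1]$-orbit. Applying a suitable power of the shift $[1]$ — itself a derived autoequivalence of $\cd^b(B)$ — moves $\cd$ to $\cd'$ and the boundary of $\cd$ to the boundary of $\cd'$ (the shift permutes the CCs within a $[1]$-orbit and, commuting with $\tau$, sends boundary objects to boundary objects, as in the proof of Corollary \ref{C:AutoCC}). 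So after composing $\Phi_0$ with an appropriate shift we may assume $\Phi_0(M(p))$ lies in the boundary of the \emph{same} component $\cd'$ that contains $M(q)$. Finally, Corollary \ref{C:Auto} supplies a derived autoequivalence $\psi_0$ of $\cd^b(B)$ with $\psi_0(\Phi_0(M(p)))\cong M(q)$; setting $\psi := \psi_0\circ (\text{shift})\circ \Phi_0$ gives the desired derived equivalence with $\psi(M(p))\cong M(q)$.

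The step I expect to require the most care is the claim that $M(p)$ and $M(q)$ can be placed in CCs lying in the \emph{same} $[1]$-orbit — equivalently, that after transporting along $\Phi_0$ and shifting one lands in exactly the component of $M(q)$. With the single-summand hypothesis $\phi_B=[n,m]$ this is immediate since there is only one orbit of CCs to begin with, so the only real content is bookkeeping: checking that $[1]$ acts transitively on the CCs of a single orbit and preserves boundaries. This last point is essentially contained in (the proof of) Corollary \ref{C:AutoCC} applied to $\psi=[1]$, together with Proposition \ref{P:CatInter} describing the orbit. One should also note that the argument never leaves the realm of non-homogeneous CCs, so Corollaries \ref{C:Auto} and \ref{C:AutoCC} apply without the excluded case $\ZZ\AA_\infty/\tau$ causing trouble; this is guaranteed by Lemma \ref{L:Noteq} and Corollary \ref{L:homo}.
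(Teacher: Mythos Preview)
Your proof is correct and follows essentially the same route as the paper: start from an arbitrary derived equivalence, use Lemma~\ref{L:Noteq} and Corollary~\ref{L:homo} to rule out homogeneous CCs, invoke Proposition~\ref{P:Boundary} and Corollary~\ref{C:AutoCC} to place $\Phi_0(M(p))$ on the boundary of a CC of $B$, then use the single-summand hypothesis via Proposition~\ref{P:CatInter} to shift into the component of $M(q)$, and finish with Corollary~\ref{C:Auto}. The paper's argument is identical in structure, only more tersely written.
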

\begin{proof}
Let $\gamma \colon \cd^b(A) \to \cd^b(B)$ be the given derived equivalence. Lemma \ref{L:Noteq} and Corollary \ref{L:homo} imply that all CCs of $A$ and $B$ are non-homogeneous. In combination with Proposition \ref{P:Boundary} this shows that $M(p)$ is contained in the boundary of a non-homogeneous CC of $A$. So $\gamma(M(p))$ is contained in the boundary of a CC of $B$ by Corollary \ref{C:AutoCC}. Since $\phi_B=[n, m]$ has a single summand, there is a unique orbit of CCs of $B$ under the shift functor by Proposition \ref{P:CatInter}. In particular there exists an integer $s$ such that $\gamma(M(p))[s]$ and $M(q)$ are contained in the same (non-homogeneous) CC of $B$. Corollary \ref{C:Auto} yields an autoequivalence $\alpha$ of $\cd^b(B)$ such that $\alpha\gamma(M(p))[s] \cong M(q)$. Thus $\psi:=\alpha \circ \gamma \circ [s]$ is an autoequivalence with the desired properties.
\end{proof}

\noindent
Combining Lemmas \ref{L:Key} and \ref{L:Kickout} yields the following corollary, which is used in Proposition \ref{P:Prep}.

\begin{cor}\label{C:Kick}
In the notation of Lemma \ref{L:Key}, assume that $p=1_v$ and $q=1_w$ are trivial permitted threads such that the corresponding simple string modules $M(p)=S_v$ and $M(q)=S_w$ are exceptional. 

Then there exists a derived equivalence $\cd^b((1-e_v)A(1-e_v)) \cong \cd^b((1-e_w)B(1-e_w))$, where $e_v$ and $e_w$ are the idempotents corresponding to the vertices $v$ and $w$. 
\end{cor}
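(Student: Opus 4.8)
The plan is to feed the conclusion of Lemma~\ref{L:Key} directly into Lemma~\ref{L:Kickout}. Since $p=1_v$ and $q=1_w$ are trivial permitted threads, the associated string modules are the simple modules $M(p)=S_v$ and $M(q)=S_w$, which are exceptional by hypothesis; in particular they are the string modules attached to permitted threads, so they lie in the boundary of (necessarily non-homogeneous) characteristic components by Proposition~\ref{P:Boundary}. Applying Lemma~\ref{L:Key} to the threads $p$ and $q$ therefore produces a derived equivalence $\psi\colon \cd^b(A)\to \cd^b(B)$ with $\psi(S_v)\cong S_w$.

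Next I would invoke Lemma~\ref{L:Kickout} with $\Phi=\psi$, with $S_i=S_v$ the exceptional simple $A$-module, with $S_j=S_w$, and with $\Psi=\id_{\cd^b(B)}$ the identity autoequivalence. The hypothesis $\Phi(S_i)\cong\Psi(S_j)$ of Lemma~\ref{L:Kickout} is exactly the relation $\psi(S_v)\cong S_w$ just obtained, and $S_w=\Psi^{-1}(\Phi(S_v))$ is exceptional either by assumption in the statement or simply because $\psi$ is a triangle equivalence. Lemma~\ref{L:Kickout} then yields a triangle equivalence $\cd^b((1-e_v)A(1-e_v))\cong \cd^b((1-e_w)B(1-e_w))$, with $e_v\in A$ and $e_w\in B$ the idempotents corresponding to $v$ and $w$, which is precisely the assertion.

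There is essentially no obstacle at this stage: all of the substantial work has been done upstream. The genuinely non-trivial inputs are (a) Proposition~\ref{P:Boundary}, that a trivial permitted thread $1_v$ gives a simple module $S_v$ in the boundary of a characteristic component (already used in the proof of Lemma~\ref{L:Key}), and (b) the Verdier-quotient argument inside Lemma~\ref{L:Kickout}, which uses that $S_v$ is exceptional so that $\thick(S_v)=\thick(A/A(1-e_v)A-\mod)$ is exactly the kernel of the quotient functor $\cd^b(A)\to \cd^b((1-e_v)A(1-e_v))$ coming from Proposition~\ref{P:AdjTriple}, and symmetrically for $B$ using that $S_w$ is exceptional. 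The only point that requires a moment of care is the bookkeeping---that the idempotent $1-e_v$ appearing in Lemma~\ref{L:Kickout} is the one named in the statement, and that the exceptionality of $S_w$ (part of the hypothesis) is what makes the $B$-side identification go through without any extra assumption.
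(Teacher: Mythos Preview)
Your proposal is correct and follows exactly the approach the paper intends: the paper simply states that the corollary follows by combining Lemma~\ref{L:Key} with Lemma~\ref{L:Kickout}, and your argument spells this out precisely (apply Lemma~\ref{L:Key} to obtain $\psi$ with $\psi(S_v)\cong S_w$, then feed this into Lemma~\ref{L:Kickout} with $\Psi=\id$).
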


The next result plays a key role in our proof of Proposition \ref{P:Notequiv}, see Section \ref{S:Proof}.

\begin{cor}\label{C:Key}
In the notation of Lemma \ref{L:Key}, let $q$ be a permitted thread such that $M(q)$ is a projective $B$-module. Then there exists a tilting object $T$ in $\cd^b(A)$ which contains $M(p)$ as a direct summand and satisfies $\End_A(T) \cong B^{\rm op}$.
\end{cor}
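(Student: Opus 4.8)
The plan is to transport the regular representation of $B$ along the derived equivalence produced by Lemma~\ref{L:Key}, and then read off both the endomorphism algebra and the $M(p)$-summand from the decomposition of ${}_BB$ into indecomposable projectives.

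First I would invoke Lemma~\ref{L:Key}: since $p$ and $q$ are permitted threads of the derived equivalent gentle algebras $A$ and $B$, and since we are in the situation of Setup~\ref{S:DE} (so $\phi_B=[n,m]$ has a single summand), there is a derived equivalence $\psi\colon \cd^b(A) \to \cd^b(B)$ with $\psi(M(p)) \cong M(q)$. Now comes the key point: by hypothesis $M(q)$ is a projective $B$-module, and it is indecomposable, being the string module of a permitted thread; hence, by Krull--Schmidt, $M(q)$ is an indecomposable direct summand of the regular module ${}_BB$, so ${}_BB \cong M(q) \oplus Q$ where $Q$ is the sum of the remaining indecomposable projectives. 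Set $T := \psi^{-1}({}_BB) \in \cd^b(A)$. Because ${}_BB$ is a tilting object of $\cd^b(B)$ with $\End_B({}_BB) \cong B^{\rm op}$, and a triangle equivalence carries tilting objects to tilting objects and induces ring isomorphisms on endomorphism rings, the object $T$ is a tilting object of $\cd^b(A)$ with $\End_A(T) \cong B^{\rm op}$. Finally, $T \cong \psi^{-1}(M(q)) \oplus \psi^{-1}(Q) \cong M(p) \oplus \psi^{-1}(Q)$ by the choice of $\psi$, so $M(p)$ is a direct summand of $T$, as required.

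I do not expect a genuine obstacle here: the substantive work is already done inside Lemma~\ref{L:Key} (and, further upstream, in the Avella-Alaminos--Gei{\ss} machinery), which is precisely the statement that the derived equivalence can be adjusted to send $M(p)$ to the prescribed boundary object $M(q)$; once $M(q)$ is moreover known to be projective, it automatically becomes a summand of the regular module and the corollary falls out. The only points needing a little care are bookkeeping ones --- keeping track of the opposite algebra in $\End_B({}_BB)\cong B^{\rm op}$, and noting that $\psi^{-1}$ respects the direct sum decomposition ${}_BB \cong M(q)\oplus Q$ so that the $M(p)$-summand really appears --- and these are routine. This corollary is then exactly the input needed to realise $B^{\rm op}$ (hence $B$) via a tilting object whose structure can be compared with that of $A_2$ in the proof of Proposition~\ref{P:Notequiv}.
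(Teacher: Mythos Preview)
Your argument is correct and essentially the same as the paper's: both use Lemma~\ref{L:Key} to get a derived equivalence $\psi$ with $\psi(M(p))\cong M(q)$, take $T=\psi^{-1}(B)$ (the paper phrases this via Rickard's derived Morita theory), and then observe that $M(q)$, being projective, is a summand of $B$, so $M(p)\cong\psi^{-1}(M(q))$ is a summand of $T$.
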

\begin{proof}
Lemma \ref{L:Key} shows that there is a derived equivalence $\psi\colon \cd^b(A) \to \cd^b(B)$ such that $\psi(M(p)) \cong M(q)$. Rickard's derived Morita theory \cite{Rickard} shows that there is a  tilting object $T  \in \cd^b(A)$ such that $\End_A(T) \cong B^{\rm op}$ and $\psi(T) \cong B$. Since $M(q)$ is projective it is a direct summand of $B$. Therefore $\psi^{-1}(M(q)) \cong M(p)$ is a direct summand of $T$ completing the proof.
\end{proof}

\subsection*{On the conjecture of Bobi{\'n}ski \& Malicki}

Following \cite{BM}, we define the following two families of gentle two-cycle algebras:

\[
\begin{tikzpicture}[description/.style={fill=white,inner sep=2pt}]
    \matrix (n) [matrix of math nodes, row sep=1em, ampersand replacement=\&,
                 column sep=1.3em, text height=1.5ex, text depth=0.25ex,
                 inner sep=0pt, nodes={inner xsep=0.3333em, inner
ysep=0.3333em}]
    { \&\& \&\& 3 \& 4 \& \cdots \& p+1 \\
       \mathbb{Q}_p:= \&1 \& \& 2  \&  \&\& \&\& p+2  \\ 
           };

\path[-> ] (n-2-2) edge [bend right=20] node[ scale=0.75, fill=white] [midway] {$b$} 
(n-2-4);
\path[-> ] (n-2-4) edge node[ scale=0.75, fill=white] [midway] {$b$} 
(n-2-9);

\path[-> ] (n-2-2) edge [bend left=20] node[ scale=0.75, fill=white] [midway] {$a$} 
(n-2-4);
\path[->] (n-2-4) edge node[ scale=0.75, 
xshift=-8pt, yshift=4pt
] [midway] {$a$} 
(n-1-5);
\path[->] (n-1-5) edge  
(n-1-6);
\path[->] (n-1-6) edge  
(n-1-7);
\path[->] (n-1-7) edge  
(n-1-8);
\path[->] (n-1-8) edge  
(n-2-9);

\draw (n-1-1) node[yshift=-59pt, xshift=-24pt] {$\mathbb{I}_{p}:=( a^2, b^2)$ };

\end{tikzpicture}
\]


defines algebras $A(p):=k\mathbb{Q}_p/\mathbb{I}_p$ for every $p \in \mathbb{Z}_{\geq 1}$ and
 
\[
\begin{tikzpicture}[description/.style={fill=white,inner sep=2pt}]
    \matrix (n) [matrix of math nodes, row sep=1em, ampersand replacement=\&,
                 column sep=1.3em, text height=1.5ex, text depth=0.25ex,
                 inner sep=0pt, nodes={inner xsep=0.3333em, inner
ysep=0.3333em}]
    { \& p+1  \&\& \cdots \&\& 3 \&\& 2\\ \\
       \mathbb{O}_p:= \& p+2 \&  \& \& \&  \& \& 1   \\ 
    };
\path[-> ] (n-3-2) edge [bend left=15] node[ scale=0.75, fill=white] [midway] {$\gamma$} 
(n-3-8);
\path[-> ] (n-3-2) edge [bend right=15] node[ scale=0.75, fill=white] [midway] {$\beta$} 
(n-3-8);
\path[->] (n-3-8) edge node[ scale=0.75, xshift=13pt] [midway] {$\alpha_{p+1}$} 
(n-1-8);
\path[->] (n-1-8) edge node[ scale=0.75, yshift=8pt] [midway] {$\alpha_{p}$} 
(n-1-6);
\path[->] (n-1-6) edge node[ scale=0.75, yshift=8pt] [midway] {$\alpha_{p-1}$} 
(n-1-4);
\path[->] (n-1-4) edge node[ scale=0.75, yshift=8pt] [midway] {$\alpha_2$} 
(n-1-2);
\path[->] (n-1-2) edge node[ scale=0.75, xshift=-9pt, yshift=3pt] [midway] {$\alpha_1$} 
(n-3-2);

\draw (n-1-1) node[yshift=-59pt, xshift=88pt] {$\mathbb{I}_{p}(r):=( \alpha_{p+1} \beta, \gamma \alpha_{1}, \alpha_{1} \alpha_{2}, \alpha_{2} \alpha_{3}, \ldots, \alpha_{r}\alpha_{r+1})$ };

\end{tikzpicture}
\]

defines algebras $B(p, r):=k\mathbb{O}_p/\mathbb{I}_p(r)$ for $p \in \ZZ_{\geq 0}$ and $r \in [0, p]$.
\begin{rem}
\begin{itemize}
\item[(a)] In \cite{BM}, the algebras $A(p)$ are denoted by $\Lambda'_0(p, 0)$ for $p \geq 1$ and the $B(p, r)$ are denoted by $\Lambda_0(p+1, r)$ for $p \geq 0$.
\item[(b)] The algebras from Proposition \ref{P:Notequiv} appear as special cases. Namely, $A_1=A(1)$ and $A_2=B(1, 1)$. Moreover, $A'_2=B(1, 0)$ is the algebra defined in \eqref{E:CJHalgebra}.
\end{itemize}
\end{rem}

The following conjecture of Bobi{\'n}ski \& Malicki \cite[Conjecture 1]{BM} states that different algebras from the families $A(p)$ and $B(p, r)$ are not derived equivalent. This was recently proved by Bobi{\'n}ski \cite{Bobinski} building on earlier work of Amiot \cite{Amiot}. 

\begin{Conjecture}[Bobi{\'n}ski \& Malicki] \label{C:BobMali} \quad \\
\noindent
{\rm (a)} $\cd^b(B(p, r)) \ncong \cd^b(A(p'))$ for all $p \in \ZZ_{\geq 0}, p' \in \ZZ_{\geq 1}, r \in [0, p]$.

\noindent
{\rm (b)} $\cd^b(B(p, r)) \cong \cd^b(B(p', r'))$ implies $p=p'$ and $r=r'$.

\end{Conjecture}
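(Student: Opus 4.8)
The plan is to combine the standard derived invariants of gentle algebras with the kick-out technique of Lemmas~\ref{L:Kickout} and~\ref{L:Key} and Corollary~\ref{C:Kick}, so as to reduce both statements, by induction on $p$, to the case $p=1$ --- where $A(1)=A_1$, $B(1,1)=A_2$, $B(1,0)=A'_2$ --- which is settled by Proposition~\ref{P:Notequiv} together with the classification in Corollary~\ref{C:DerivedEq}.

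First I would cut down the parameter space by invariants. The rank of the Grothendieck group is the number of vertices, namely $p+2$ in both families; being a derived invariant it forces $p=p'$ in (a) and $p=p'$ in (b). All members have finite global dimension (obvious for the directed algebras $A(p)$, and for $B(p,r)$ as in Remark~\ref{R:gldim}, the cycle in $\mathbb{O}_p$ carrying no full zero-relation), so $\phi_{A(p)}$ and $\phi_{B(p,r)}$ are derived invariants; one checks, by running the algorithm of \cite{AG} on these explicit quivers with relations, that each has a distinguished non-repeated summand, so that the hypotheses of Setup~\ref{S:DE} and hence of Corollary~\ref{C:Kick} are available, and by Lemma~\ref{L:Noteq} and Corollary~\ref{L:homo} the relevant characteristic components are non-homogeneous. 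The AG-invariant, the rank of the symmetrised Euler form, and further derived invariants of the Euler form such as the Coxeter polynomial (all derived invariant by \cite[Proposition, p.~101]{Happelbook} and finiteness of global dimension) already separate every pair except those whose combinatorial data genuinely coincide --- the situation modelled by $A_1$ versus $A_2$ in Proposition~\ref{P:Notequiv}, and by $A_2$ versus $A'_2$.

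For such a pair, assume a derived equivalence $\cd^b(\Lambda)\cong\cd^b(\Lambda')$ with $\Lambda,\Lambda'$ among $A(p)$ and the $B(p,r)$, $p\geq 2$. One selects trivial permitted threads $1_v$ of $\Lambda$ and $1_w$ of $\Lambda'$ with $S_v$, $S_w$ exceptional and such that the corner algebras $(1-e_v)\Lambda(1-e_v)$ and $(1-e_w)\Lambda'(1-e_w)$ are derived equivalent to members $A(p-1)$, $B(p-1,r')$ of the two families; by Proposition~\ref{P:Boundary} these simples lie in the boundaries of non-homogeneous characteristic components, so Corollary~\ref{C:Kick} produces a derived equivalence between the corner algebras, so that the non-equivalence at level $p$ follows from that at level $p-1$. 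Iterating lands on $p=1$, where Proposition~\ref{P:Notequiv}, Proposition~\ref{RankEulerform} and Corollary~\ref{C:DerivedEq} give $\cd^b(A_1)$, $\cd^b(A_2)$ and $\cd^b(A'_2)$ pairwise non-equivalent --- exactly (a) and (b) for $p=1$ --- and by tracking how the relation parameter $r'$ of each corner algebra depends on $(p,r)$ one makes the induction close for (a) and (b) simultaneously.

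The hard part is the corner-algebra identification in the previous paragraph. A priori $(1-e_v)\Lambda(1-e_v)$ is merely a corner of a gentle algebra, need not be gentle, and need not lie in the given family; for $A(p)$ one must verify by a direct computation --- deleting a suitable tail vertex of $\mathbb{Q}_p$ and working out the idempotent truncation --- that the corner is derived equivalent to $A(p-1)$, while for $B(p,r)$ the cycle in $\mathbb{O}_p$ makes both the exceptionality of the candidate simples and the shape of the corner more delicate, so that one may need to kick out an entire exceptional string module (applying Lemma~\ref{L:Key} directly) rather than a simple, or --- for the parameter ranges in which no convenient permitted threads match up --- to replace this step by the Auslander--Reiten quiver analysis of Amiot~\cite{Amiot} and Bobi{\'n}ski~\cite{Bobinski}. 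The remaining work, carrying out the invariant computations of the second paragraph in closed form in $p$ and $r$, is routine.
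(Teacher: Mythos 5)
Your overall strategy --- invariants plus the kick-out reduction of Corollary~\ref{C:Kick}, inducting down on $p$ --- is exactly the paper's strategy in Proposition~\ref{P:Prep} and Corollary~\ref{C:RedBobMali}, but the induction does not close the way you claim, and the paper does not claim it does: the statement is stated as a \emph{Conjecture} precisely because the paper only proves a reduction and one special case. The kick-out step for $B(p+1,r)$ deletes the simple $S_2$, and $(1-e_2)B(p+1,r)(1-e_2)\cong B(p,r)$ only for $r<p+1$: when $r=p+1$ the relation $\alpha_{p+1}\alpha_{p+2}$ lies in $\mathbb{I}_{p+1}(p+1)$, so $1_2$ is not a permitted thread and the reduction fails. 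Consequently iterating the reduction does \emph{not} land at $p=1$; it lands at the cases $B(p,p)$ versus $B(p,r)$ ($r<p$) and $B(p,p)$ versus $A(p)$ for \emph{every} $p$, which is exactly the content of Corollary~\ref{C:RedBobMali}. These residual cases are genuinely open within the paper's methods (see Remarks~\ref{R:Extend} and~\ref{R:Extension}, which only sketch what an extension of the proof of Proposition~\ref{P:Notequiv} would have to establish); the paper attributes their resolution, and hence the full conjecture, to the different techniques of Amiot~\cite{Amiot} and Bobi{\'n}ski~\cite{Bobinski}. Your parenthetical fallback ``replace this step by the Auslander--Reiten quiver analysis of Amiot and Bobi{\'n}ski'' is not a proof step but a citation of the external proof.

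Two smaller points. First, your second paragraph overstates what the invariants achieve: the AG-invariant and the rank of the symmetrized Euler form only separate $r$ from $r'$ modulo $2$ and $A(p)$ from $B(p,r)$ with $r$ even (Remark~\ref{R:BM}); they leave many pairs for each $p$, not just ``the situation modelled by $A_1$ versus $A_2$''. Second, what the paper actually extracts from Proposition~\ref{P:Notequiv} via the upward induction of Proposition~\ref{P:Prep}(ii) is only Corollary~\ref{C:Specialcase}, namely $\cd^b(B(p,1))\ncong\cd^b(A(p))$; if you restrict your claim to the reduction statement (Corollary~\ref{C:RedBobMali}) and to this special case, your argument matches the paper's and is correct.
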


\begin{rem}\label{R:BM}
Since the rank of the Grothendieck group is a derived invariant, it follows that
 $\cd^b(B(p, r)) \cong \cd^b(B(p', r'))$ implies $p=p'$ in part (b) of the conjecture. By the same argument (a) holds if $p \neq p'$.

 Bobi{\'n}ski \& Malicki \cite[paragraph after Conjecture 1]{BM} check that part (a) holds for $r \equiv 0 \, ({\rm mod} \, 2)$ and also $\cd^b(B(p, r)) \cong \cd^b(B(p', r'))$ implies $r \equiv r' \, ({\rm mod} \, 2)$. Indeed the symmetrized Euler form of
 $A(p)$ has rank $p$, whereas the rank of the symmetrized Euler form of $B(p, r)$ is $p+1$ if $r$ is even and $p$ if $r$ is odd.
\end{rem}

\noindent
The next result will be used in Corollary \ref{C:RedBobMali} to show that one of the algebras in the conjecture can be assumed to be $B(p, p)$. Bobi{\'n}ski \cite[Corollary 2.2]{Bobinski} shows the converse statements of this proposition and combines them with Amiot's results \cite{Amiot} to prove Conjecture \ref{C:BobMali} in full generality.
\begin{prop} \label{P:Prep} Let $p \in \mathbb{Z}_{\geq 1}$ and $r, r' \in [0, p]$.

\begin{itemize}
\item[$(i)$] If $B(p, r)$ and  $B(p, r')$ are not derived equivalent, then $B(p+1, r)$ and  $B(p+1, r')$ are not derived equivalent.
\item[$(ii)$] If $B(p, r)$ is not derived equivalent to $A(p)$, then  $B(p+1, r)$ and $A(p+1)$ are not derived equivalent. 
\end{itemize}
\end{prop}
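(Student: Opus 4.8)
The plan is to prove both parts by the same ``edge‑contraction'' reduction, using Corollary \ref{C:Kick}: a hypothetical derived equivalence between the $(p{+}1)$‑algebras is pushed down to a derived equivalence between the corresponding $p$‑algebras, which is excluded by hypothesis. The preliminary, combinatorial input is to exhibit, for each $r\in[0,p]$, a vertex $v=v(r)$ of the quiver $\mathbb{O}_{p+1}$ of $B(p+1,r)$ such that: (1) the trivial path $1_v$ is a permitted thread in the sense of Definition \ref{D:thread}, i.e.\ $v$ has a unique incoming arrow $\delta$ and a unique outgoing arrow $\epsilon$ with $\epsilon\delta\notin\mathbb{I}_{p+1}(r)$; (2) the simple module $S_v$ is exceptional; and (3) $(1-e_v)B(p+1,r)(1-e_v)\cong B(p,r)$. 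A natural choice is $v(r)=p+2-r$, which lies away from the two ends of the oriented cycle carrying the boundary relations $\alpha_{p+2}\beta,\ \gamma\alpha_1$ and from the chain relations $\alpha_1\alpha_2,\dots,\alpha_r\alpha_{r+1}$. For (3) one contracts the edge at $v$: since $\epsilon\delta\notin\mathbb{I}_{p+1}(r)$ the composite $\epsilon\delta$ becomes an honest arrow of the corner algebra, the $2$‑cycle quiver is shortened by one, and the relations transport to exactly the defining relations of $B(p,r)$ after relabelling. For (2) I would compute the minimal projective resolution of $S_v$ and check that it ``wraps once around the oriented cycle'', being of the shape $0\to P_1\to P_{p+3}\to P_{p+2}\to\cdots\to P_{v(r)}\to S_{v(r)}\to 0$; in particular $P_{v(r)}$ occurs only as the projective cover, so $\Ext^{>0}_{B(p+1,r)}(S_{v(r)},S_{v(r)})=0$. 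The analogues for $A(p+1)$ are easier: taking the vertex $w=3$ on the ``long arm'' of $\mathbb{Q}_{p+1}$, the path $1_3$ is a trivial permitted thread, $S_3$ has resolution $0\to P_4\to P_3\to S_3\to 0$ and is therefore exceptional, and contracting the edge at $3$ gives $(1-e_3)A(p+1)(1-e_3)\cong A(p)$.

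Granting this, part $(i)$ runs as follows. Suppose, for contradiction, that $\cd^b(B(p+1,r))\cong\cd^b(B(p+1,r'))$. By \cite[Theorem A]{AG} these algebras have a common AG‑invariant $\phi$, and one verifies from the algorithm of \cite[Section 3]{AG} (cf.\ the computations in \cite{BM}) that $\phi$ has a summand occurring with multiplicity one — in the cases at hand $\phi$ is in fact a single summand $[n,m]$ — so that Setup \ref{S:DE}, or the generalisation indicated right after it, applies. Moreover, being string modules of permitted threads, $S_{v(r)}$ and $S_{v(r')}$ lie in the boundaries of non‑homogeneous characteristic components of the $[1]$‑orbit corresponding to that summand (Proposition \ref{P:Boundary}, Corollary \ref{L:homo}, Corollary \ref{C:AutoCC} and Proposition \ref{P:CatInter}). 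Applying Corollary \ref{C:Kick} to the trivial permitted threads $1_{v(r)}$ of $B(p+1,r)$ and $1_{v(r')}$ of $B(p+1,r')$ yields triangle equivalences $\cd^b(B(p,r))\cong\cd^b((1-e_{v(r)})B(p+1,r)(1-e_{v(r)}))$ and $\cd^b(B(p,r'))\cong\cd^b((1-e_{v(r')})B(p+1,r')(1-e_{v(r')}))$ whose right‑hand sides are equivalent, whence $\cd^b(B(p,r))\cong\cd^b(B(p,r'))$, contradicting the hypothesis. For part $(ii)$ one argues identically, replacing the second algebra by $A(p+1)$ and the vertex $v(r')$ by $w=3$, using $(1-e_3)A(p+1)(1-e_3)\cong A(p)$ and the fact that under the contradiction hypothesis $\phi_{B(p+1,r)}=\phi_{A(p+1)}$, to obtain $\cd^b(B(p,r))\cong\cd^b(A(p))$, again a contradiction.

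The edge‑contraction computations establishing (3), and the resulting description of the corner algebras, are routine bookkeeping with quivers and relations. The point needing genuine care is (2): showing that the minimal projective resolution of $S_{v(r)}$ never revisits the vertex $v(r)$ is a finite but delicate ``going around the cycle'' computation whose outcome depends on the exact position of the relations $\mathbb{I}_{p+1}(r)$ (the case $r=p$, where $v(r)=2$ sits next to the end of the cycle, and the case $r=0$, where there are no chain relations, both need separate inspection). The other place where one must do real work is the AG‑invariant hypothesis of Corollary \ref{C:Kick}, namely that $\phi_{B(p+1,r)}$ (and $\phi_{A(p+1)}$) has a multiplicity‑one summand and that the simple $S_{v(r)}$ sits in a characteristic component of the associated orbit; this is exactly where the combinatorics of the AG‑invariant of these two families, as developed in \cite{AG, BM}, enters.
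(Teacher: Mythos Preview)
Your overall strategy---proving the contrapositive via Corollary~\ref{C:Kick}, exhibiting a trivial permitted thread at a vertex whose simple is exceptional and whose corner algebra drops the parameter $p$ by one---is exactly the paper's approach, and your argument is correct. The one place where you make life harder than necessary is the choice of vertex in $B(p+1,r)$: you pick $v(r)=p+2-r$, which varies with $r$ and forces the case distinctions you flag for $r=0$ and $r=p$, together with a nontrivial relabelling of the chain relations when computing the corner algebra. The paper instead uses the single vertex $v=2$ for \emph{all} $r\in[0,p]$: since the composite $\alpha_{p+1}\alpha_{p+2}$ is never a relation (that would require $r\ge p+1$), the trivial path $1_2$ is always a permitted thread, and $(1-e_2)B(p+1,r)(1-e_2)\cong B(p,r)$ by a uniform relabelling. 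Exceptionality of $S_2$ is then immediate because no oriented cycle with full relations passes through vertex $2$. The paper also records the AG-invariant explicitly as $\phi_{B(p+1,r)}=\phi_{A(p+1)}=[p+2,p+4]$ (citing \cite[Lemma~3.1]{BM}), which puts you squarely in Setup~\ref{S:DE} without further discussion. With these simplifications the whole proof is three lines; your version works but carries avoidable bookkeeping.
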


\begin{proof}
We can apply Corollary \ref{C:Kick} to the simple modules $S_2$ in $\cd^b(B(p+1, r))$ respectively $S_3$ in $\cd^b(A(p+1))$ to show the contraposition of $(i)$ and $(ii)$, respectively. 

Indeed one can check that both simple modules are exceptional and defined by trivial permitted threads. Moreover, it follows by direct calculation or using \cite[Lemma 3.1]{BM} that the AG-invariants of $B(p+1, r)$ for all $r$ and $A(p+1)$ are $[p+2, p+4]$.  Since $(1-e_2) B(p+1, r) (1-e_2) \cong B(p, r)$ for all $r<p+1$ and 
$(1-e_3) A(p+1) (1-e_3) \cong A(p)$, Corollary \ref{C:Kick} proves the contraposition of $(i)$ respectively $(ii)$.
\end{proof}

Using Proposition \ref{P:Prep} iteratively, we can now reduce the conjecture as follows.

\begin{cor} \label{C:RedBobMali}
It is enough to check the following cases of Conjecture \ref{C:BobMali}:
\begin{itemize}
\item[$(i)$] $B(p, p)$ and $B(p, r)$ are not derived equivalent for all $p \geq 0$ and $r<p$. 
\item[$(ii)$] $B(p, p)$ and $A(p)$ are not derived equivalent for all $p \geq 1$.
\end{itemize}
\end{cor}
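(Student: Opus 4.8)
The plan is to obtain this reduction formally, combining Proposition~\ref{P:Prep} with the elementary invariants recorded in Remark~\ref{R:BM}. First I would trim Conjecture~\ref{C:BobMali} down to its essential cases. By Remark~\ref{R:BM} the rank of the Grothendieck group forces $p=p'$ both in part~(b) and in the only non-trivial instances of part~(a), while the symmetrized Euler form already settles part~(a) whenever $r$ is even. Hence it suffices to prove: for a fixed $p\ge 1$, that $B(p,r)$ and $A(p)$ are not derived equivalent for every odd $r\in[0,p]$; and, for a fixed $p\ge 0$, that $B(p,r)$ and $B(p,r')$ are not derived equivalent whenever $0\le r<r'\le p$.

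Next I would run a downward induction on $p$ driven by Proposition~\ref{P:Prep}. For the $B$-versus-$B$ statement, given $0\le r<r'\le p$: if $r'=p$ this is precisely case~$(i)$ of the Corollary; if $r'<p$ then $r,r'\in[0,p-1]$ and $p\ge 2$ (since $r'\ge 1$), so Proposition~\ref{P:Prep}$(i)$ replaces the pair at level $p$ by the pair at level $p-1$. Iterating $p-r'$ times reduces the question to whether $B(r',r)$ and $B(r',r')$ are derived equivalent, which is case~$(i)$ with parameter $r'$. For the $B$-versus-$A$ statement with $r$ odd (hence $r\ge 1$): if $r=p$ this is case~$(ii)$, and if $r<p$ then Proposition~\ref{P:Prep}$(ii)$, applicable since every intermediate level stays $\ge 1$, brings the problem down in $p-r$ steps to $B(r,r)$ versus $A(r)$, which is case~$(ii)$ with parameter $r$. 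The cases $p=0$ cause no trouble: case~$(i)$ is then vacuous, and $B(0,0)$ is never compared with an $A(p')$ at the same level because $p'\ge 1$, which is why case~$(ii)$ is only asserted for $p\ge 1$.

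I do not expect a genuine obstacle here: the mathematical substance lies entirely in Proposition~\ref{P:Prep}, whose proof rests on Corollary~\ref{C:Kick} together with the computation of the AG-invariants and of the corner algebras $(1-e_2)B(p+1,r)(1-e_2)\cong B(p,r)$ and $(1-e_3)A(p+1)(1-e_3)\cong A(p)$; the present statement is just the bookkeeping that assembles those inputs. The only points demanding care are keeping the second parameter inside the admissible range $[0,p]$ at every step of the iteration -- immediate, since decreasing $p$ by one while fixing $r,r'$ is legitimate exactly as long as $\max(r,r')\le p-1$ -- and disposing of the residual case $r=0$ of part~(a), which is not of the form covered by case~$(ii)$ but is precisely the even case already settled in Remark~\ref{R:BM}.
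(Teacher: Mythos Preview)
Your argument is correct and follows exactly the route the paper has in mind: the paper's proof is the single sentence ``Using Proposition~\ref{P:Prep} iteratively, we can now reduce the conjecture as follows'', and you have simply spelled out that iteration together with the boundary checks (range of parameters, the parity observations from Remark~\ref{R:BM}, and why the bottom levels land precisely on cases~$(i)$ and~$(ii)$). There is nothing to add.
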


\begin{rem}\label{R:Extend}
It seems that the approach used in the proof of Proposition \ref{P:Notequiv} could be extended to deal with (some of) the cases of Conjecture \ref{C:BobMali} remaining after the reduction of Corollary \ref{C:RedBobMali}, we refer to Remark \ref{R:Extension} for more details.
\end{rem}

Combining Proposition \ref{P:Notequiv} with Proposition \ref{P:Prep} $(ii)$, we obtain:
\begin{cor} \label{C:Specialcase}
$\cd^b(B(p, 1)) \ncong \cd^b(A(p))$ for all $p>0$.
\end{cor}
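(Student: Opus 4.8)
The plan is to deduce this from Proposition \ref{P:Notequiv} and Proposition \ref{P:Prep}~$(ii)$ by a straightforward induction on $p$. The base case $p=1$ is precisely Proposition \ref{P:Notequiv}: under the identifications $A(1)=A_1$ and $B(1,1)=A_2$ (see Remark \ref{R:BM} and the preceding remark, or rather the remark listing these special cases), that proposition asserts $\cd^b(B(1,1)) \ncong \cd^b(A(1))$.

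For the inductive step, suppose $\cd^b(B(p,1)) \ncong \cd^b(A(p))$ for some $p \geq 1$. Since $p \geq 1$, the parameter $r=1$ satisfies $r \in [0,p]$, so Proposition \ref{P:Prep}~$(ii)$ applies with this $p$ and $r=1$, and its contrapositive-free formulation directly yields that $B(p+1,1)$ and $A(p+1)$ are not derived equivalent, i.e.~$\cd^b(B(p+1,1)) \ncong \cd^b(A(p+1))$. Iterating, we obtain $\cd^b(B(p,1)) \ncong \cd^b(A(p))$ for all $p>0$.

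There is essentially no obstacle internal to this corollary: all the substantive content is already contained in Proposition \ref{P:Notequiv} (the base case, proved in Section \ref{S:Proof} via the AG-invariant and Corollary \ref{C:Key}) and in Proposition \ref{P:Prep}~$(ii)$ (the corner-algebra reduction, proved via Corollary \ref{C:Kick}). The only point worth noting is that the parameter $r=1$ stays admissible throughout the induction, which holds automatically since we start at $p=1$ and only increase $p$, keeping $1 \in [0,p]$ at every stage.
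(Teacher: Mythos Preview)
Your argument is correct and matches the paper's own approach: the corollary is obtained by combining Proposition~\ref{P:Notequiv} (the base case $p=1$, via the identifications $A(1)=A_1$ and $B(1,1)=A_2$) with iterated applications of Proposition~\ref{P:Prep}~$(ii)$. The only minor point is that the identifications $A_1=A(1)$ and $A_2=B(1,1)$ are recorded in the unnumbered remark just before Conjecture~\ref{C:BobMali}, not in Remark~\ref{R:BM}.
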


\section{Exceptional sequences and quasi-hereditary algebras \\ -- a negative answer to a question of Liu \& Yang} \label{S:LY}

The aim of this section is to give a negative answer to Question \ref{Q:CharQH} of Liu \& Yang (cf. \cite[Question 1.1]{derivedsimple}), which we restate below for the convenience of the reader. We refer to Section \ref{S:RecollementsETC} for unexplained terminology.

\begin{Question}\label{Q:LY} Let $k$ be a field and let $A$ be a finite-dimensional $k$-algebra. Assume that the derived category of finitely generated left $A$-modules $\cd^b(A-\mod)$ admits a composition series with simple factors being finite-dimensional division $k$-algebras. Is $A$ derived equivalent to a quasi-hereditary algebra?
\end{Question}

\begin{rem}
Liu \& Yang actually state the question with $\cd^b(A-\mod)$ replaced by the unbounded derived category of all $A$-modules $\cd(A-\Mod)$. However, it follows from \cite{AKL} and \cite{AKLY} that any $\cd^b(A-\mod)$ composition series lifts to a $\cd(A-\Mod)$-composition series. In particular, a negative answer to Question \ref{Q:LY} yields a negative answer to \cite[Question 1.1]{derivedsimple}. 
\end{rem}


\begin{lem}\label{L:Strat}
The algebra $A_2$ admits a composition series by derived categories $\cd^b(k)$. 
\end{lem}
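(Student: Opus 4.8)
The plan is to exhibit an explicit iterated recollement of $\cd^b(A_2)$ whose factors are all copies of $\cd^b(k)$, using the idempotent-recollement machinery of Proposition \ref{P:AdjTriple} together with Example \ref{E:PropTriaCompSeries}. Recall that in Example \ref{E:PropTriaCompSeries} we already produced a recollement $(\thick(P_3), \cd^b(A_2), P_3^\perp)$ with $\thick(P_3) \cong \cd^b(k)$ (since $P_3$ is exceptional) and $P_3^\perp \cong \thick(S_2 \oplus P_1) \cong \per G$. That recollement, however, is \emph{not} the right first step, because $\per G$ is not a derived module category. So the idea is instead to split off the vertices in a better order, choosing idempotents so that the corner algebra $eA_2e$ has finite global dimension and the quotient $A_2/A_2eA_2$ is itself (derived equivalent to) a nice algebra, and then iterate.

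Concretely, I would first look for a primitive idempotent $e = e_i$ such that the simple $S_i$ is exceptional and $A_2 e_i A_2$ is projective as a left module, so that by Remark \ref{R:QH}(b) (or Remark \ref{R:Recollidemp}) we get a recollement $(\cd^b(A_2/A_2 e_i A_2), \cd^b(A_2), \cd^b(e_i A_2 e_i))$ with $e_iA_2e_i \cong k$, hence the right-hand factor is already $\cd^b(k)$. Inspecting the quiver $Q_2$ with relations $I_2' + (\alpha_1\alpha_2) = (\alpha_2\beta, \gamma\alpha_1, \alpha_1\alpha_2)$: vertex $3$ is a source with two arrows $\beta,\gamma$ out to vertex $1$, and one checks $P_3$ is projective-exceptional and $A_2 e_3 A_2$ is a projective left ideal, so splitting off vertex $3$ gives $(\cd^b(A_2/A_2 e_3 A_2), \cd^b(A_2), \cd^b(k))$. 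The remaining algebra $A_2/A_2e_3A_2$ is the algebra on vertices $1,2$ with arrows $\alpha_1\colon 1\to 2$, $\alpha_2\colon 2\to 1$ and relation $\alpha_1\alpha_2 = 0$ inherited, i.e.\ a Nakayama/gentle algebra on two vertices of finite global dimension, which is well-known (e.g.\ by \cite{derivedsimple} for two-vertex algebras, or directly) to admit a derived composition series with factors $\cd^b(k)$ — indeed it is quasi-hereditary. One then iterates: split off a primitive idempotent from this two-vertex algebra to land in $(\cd^b(k), \cd^b(A_2/A_2e_3A_2), \cd^b(k))$, after checking the relevant simple is exceptional and the corresponding two-sided ideal is projective. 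Stringing these recollements together produces a binary tree with every leaf $\cd^b(k)$, which is exactly a derived composition series of $A_2$ in the sense of Definition \ref{D:Strati}.

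The step I expect to be the main (though still routine) obstacle is verifying the two hypotheses at each stage: that the chosen simple module is exceptional in the relevant derived category, and that the corresponding two-sided idempotent ideal is projective as a one-sided module so that Remark \ref{R:QH}(b) applies and the embedding $\cd^b(A/AeA)\hookrightarrow \cd^b(A)$ is fully faithful with the right kernel. This is a finite check using the explicit projective resolutions of the simples over $A_2$ (analogous to the resolutions computed for $B_l$ in Lemma \ref{L1}); one reads off that $\Ext^{>0}_{A_2}(S_3,S_3)=0$ and that $A_2 e_3 A_2$ is projective, and similarly for the successive two-vertex quotient. An alternative, even shorter route — and perhaps the cleanest to write — is to observe that $A_2$, although not quasi-hereditary (Lemma \ref{L:Notquasi}), still admits by Lemma \ref{L:RecollQH}-type reasoning an ordering of its primitive idempotents through which each successive corner has finite global dimension and each successive quotient is the trivial extension by a projective bimodule in the sense of Lemma \ref{L37}; feeding this into Corollary \ref{C:Recoll} gives the chain of recollements directly. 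Either way, once the chain is in place, the factors being $\cd^b(k)$ is immediate, and the lemma follows. If one additionally wants the unbounded version, \cite{AKL, AKLY} lift it, as noted in the remark preceding the lemma.
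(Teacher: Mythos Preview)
Your approach has a genuine gap, and in fact runs head-on into Lemma~\ref{L:Notquasi}. You are looking for a \emph{primitive} idempotent $e_i$ such that $A_2e_iA_2$ is projective as a left module and the simple $S_i$ is exceptional, so that Remark~\ref{R:QH}(b) yields a recollement $(\cd^b(A_2/A_2e_iA_2), \cd^b(A_2), \cd^b(k))$. But that is precisely the definition of a heredity ideal generated by a primitive idempotent, and Lemma~\ref{L:Notquasi} shows that no such idempotent exists in $A_2$: for every primitive $e_i$ there is a relation between the remaining two vertices giving a nonzero $\Ext^2_{A_2}$ between the corresponding simples, while $\Ext^2_{A_2/A_2e_iA_2}$ vanishes, so the canonical functor $\cd^b(A_2/A_2e_iA_2)\to\cd^b(A_2)$ is never full. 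In particular your concrete claim that $A_2e_3A_2$ is a projective left ideal is false. (There are also smaller errors: vertex $3$ is not a source, since $\alpha_1\colon 2\to 3$ ends there; and the quotient $A_2/A_2e_3A_2$ is just the path algebra of $1\xrightarrow{\alpha_2}2$, because $\alpha_1$ is killed along with vertex $3$.) Your alternative ``Lemma~\ref{L:RecollQH}-type'' route fails for the same reason: such an ordering of primitive idempotents would make $A_2$ quasi-hereditary.

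The paper's proof avoids this obstruction by using the \emph{non-primitive} idempotent $e=e_1+e_3$. The point is that although no single $S_i$ can be split off via a heredity ideal, the simple $S_2$ is still exceptional (its projective resolution is $0\to P_1\to P_3\to P_2\to S_2\to 0$), so $\thick(S_2)\cong\cd^b(k)$ and Remark~\ref{R:Recollidemp} gives a recollement $(\cd^b(k),\cd^b(A_2),\cd^b(eA_2e))$. The corner $eA_2e$ is the Kronecker algebra (the only nonzero paths between $1$ and $3$ are $\beta,\gamma$, since $\alpha_1\alpha_2=0$), which is hereditary and hence admits a further recollement $(\cd^b(k),\cd^b(kQ),\cd^b(k))$. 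The moral is that the first recollement in the derived composition series of $A_2$ is \emph{not} induced by a primitive idempotent --- and this is exactly what makes $A_2$ a counterexample to Liu--Yang's question.
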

\begin{proof}
The simple $A:=A_2$-module $S_2$ has the following projective resolution $0 \to P_1 \to P_3 \to P_2$. In particular, $S_2 \cong A/AeA$ is an exceptional $A$-module, where $e=e_1+e_3$. This shows that the canonical functor $\cd^b(k)=\cd^b(A/AeA) \to \cd^b(A)$ is fully faithful and its image is $\thick(A/AeA-\mod)$. Therefore, there is a recollement $(\cd^b(k), \cd^b(A), \cd^b(eAe))$ by Remark \ref{R:Recollidemp}. But $eAe \cong kQ$, where $Q$ is the Kronecker quiver, which admits a recollement $(\cd^b(k), \cd^b(kQ), \cd^b(k))$ induced by the exceptional sequence given by the simple $kQ$-modules (Lemma \ref{L:ExcSeqCompSeries}). This finishes the proof.
\end{proof}

\begin{rem}
We note that all categories involved in the first recollement $(\cd^b(k), \cd^b(A), \cd^b(eAe))$ in the proof of Lemma \ref{L:Strat} have Serre functors. It is well-known that this can be used to obtain a recollement  $(\cd^b(eAe), \cd^b(A), \cd^b(k))$ in the opposite direction, see e.g. \cite[Proposition 3.7]{AKLY}. However, combining the proof of Lemma \ref{L:Notquasi} with Corollary \ref{C:derivedisolated} this recollement is not equivalent to a recollement of the form $(\cd^b(B/BeB), \cd^b(B), \cd^b(eBe))$ where $B$ is a finite dimensional algebra and $e\in B$ is a primitive idempotent. Parshall \& Scott \cite[Section 5]{ParshallScott} show that this is the reason why $\cd^b(A_2)$ is not the derived category of a quasi-hereditary algebra. \end{rem}

\begin{lem}\label{L:Notquasi}
The algebra $A_2$ does not admit a quasi-hereditary structure.
\end{lem}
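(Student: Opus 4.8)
The plan is to prove the stronger (and more elementary) statement that $A_2$ possesses no heredity ideal at all; by Definition~\ref{D:QH} together with Remark~\ref{R:QH}(a) this already rules out a quasi-hereditary structure. So suppose, for a contradiction, that $A:=A_2$ is quasi-hereditary. Refining the defining chain \eqref{E:chain} as in Remark~\ref{R:QH}(a), its first nonzero term is $J_1=AeA$ for a \emph{primitive} idempotent $e$ of $A$, and $J_1$ is a heredity ideal; in particular $AeA$ is projective as a left $A$-module. Since every primitive idempotent of $A=kQ_2/(\alpha_2\beta,\gamma\alpha_1,\alpha_1\alpha_2)$ is conjugate to one of $e_1,e_2,e_3$, and conjugate idempotents generate the same two-sided ideal, it suffices to check that none of $Ae_1A$, $Ae_2A$, $Ae_3A$ is a projective left $A$-module.

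\textbf{Key steps.} First I would record that, as a left $A$-module, $AeA=AeAe_1\oplus AeAe_2\oplus AeAe_3$: each summand $AeAe_j=A\cdot(eAe_j)$ is a left submodule sitting inside the indecomposable projective $Ae_j=P_j$, and right multiplication by the orthogonal idempotents $e_j$ gives the projections. Hence a single non-projective summand $Ae_iAe_j$ suffices, and here the three quadratic relations do all the work. Using $\alpha_2\beta=0$ one computes $e_1Ae_2=k\,\beta\alpha_1$ and $Ae_1Ae_2=A\cdot\beta\alpha_1=k\,\beta\alpha_1\cong S_1$. Using $\alpha_1\alpha_2=0$ one computes $e_2Ae_1=k\,\alpha_2$ and $Ae_2Ae_1=A\cdot\alpha_2=k\,\alpha_2\cong S_2$. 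Using $\gamma\alpha_1=0$ and $\alpha_2\beta\alpha_1=0$ one computes $e_3Ae_2=k\,\alpha_1$ and $Ae_3Ae_2=A\cdot\alpha_1=k\,\alpha_1\oplus k\,\beta\alpha_1$, a $2$-dimensional indecomposable module with top $S_3$ and socle $S_1$. Since the indecomposable projectives $P_1,P_2,P_3$ have $k$-dimensions $2,3,4$ and tops $S_1,S_2,S_3$ respectively, none of $S_1$, $S_2$, $A\alpha_1$ is projective; therefore $Ae_iA$ is non-projective for every $i$, contradicting the existence of the heredity ideal $J_1$, and $A_2$ is not quasi-hereditary.

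\textbf{Main difficulty.} I do not expect a serious obstacle. Once the indecomposable projectives of $A_2$ are written down --- which one can also extract from the projective resolutions already used in the proof of Lemma~\ref{L:Strat} --- the ideals $Ae_iA$ and their non-projective (indeed simple, for $i=1,2$) direct summands are immediate from the defining relations. The one point that deserves a line of care is the reduction to primitive idempotents, namely that any quasi-hereditary filtration of $A_2$ must \emph{begin} with a heredity ideal of the form $Ae_iA$; this is exactly Remark~\ref{R:QH}(a) together with the elementary fact that conjugate primitive idempotents generate the same two-sided ideal.
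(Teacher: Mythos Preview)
Your argument is correct and, like the paper, reduces to showing that no ideal $Ae_iA$ with $e_i$ primitive is a heredity ideal; the reduction via Remark~\ref{R:QH}(a) is exactly what the paper invokes too. The difference lies in how the heredity property is ruled out. You verify \emph{directly} that $Ae_iA$ is not projective by exhibiting a concrete non-projective left summand $Ae_iAe_j$ in each case (namely $S_1$, $S_2$, and the uniserial module $A\alpha_1$ of length~$2$), which is completely elementary and self-contained. The paper instead uses the derived consequence of projectivity recorded in Remark~\ref{R:QH}(b): if $Ae_iA$ were heredity, then $\cd^b(A/Ae_iA)\hookrightarrow\cd^b(A)$ would be fully faithful; but each quotient $A/Ae_iA$ is a hereditary algebra on two vertices (an $\mathbb{A}_2$- or Kronecker quiver), so $\Ext^2_{A/Ae_iA}=0$, while the three quadratic relations of $A_2$ force a nonzero $\Ext^2_A$ between each remaining pair of simples, contradicting fullness. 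Your route avoids derived categories entirely and is arguably more robust for hand computation; the paper's route is quicker once one observes that the three relations form a ``cycle'' touching every pair of vertices, and it generalises more readily to other algebras where one can read off $\Ext^2$ directly from the relations without computing the ideals $Ae_iA$.
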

\begin{proof}
We show that $A:=A_2$ does not admit a heredity ideal $AeA$, where $e=e_i \in A$ is a {\em primitive} idempotent. Therefore, $A$ is not quasi-hereditary by Remark \ref{R:QH} (a).
Namely, by Remark \ref{R:QH} (b), the existence of such an ideal would imply that the canonical functor $\cd^b(A/AeA) \hookrightarrow \cd^b(A)$ is fully faithful. But this is impossible since there are relations in $A$ between any pair of vertices, yielding non-trivial $\Ext^2_A$- spaces between the corresponding simple $A$-modules. But $\Ext^2_{A/AeA}(-, -)=0$ for any primitive idempotent $e \in A$. Therefore, the canonical functor cannot be full. 
 \end{proof}

Summing up, we obtain a negative answer to Liu \& Yang's Question \ref{Q:LY}.

\begin{prop}\label{P:NegAnswer}
The derived category $\cd^b(A_2)$ of the gentle algebra $A_2$ admits a composition series by derived categories $\cd^b(k)$ but $A_2$ is not derived equivalent to a quasi-hereditary algebra.
\end{prop}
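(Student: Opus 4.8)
The statement of Proposition \ref{P:NegAnswer} is an immediate consequence of results already established in the excerpt, so the plan is to simply assemble them. The proof splits into two independent halves, matching the two clauses of the statement.

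For the first clause, that $\cd^b(A_2)$ admits a composition series with all factors $\cd^b(k)$: this is precisely Lemma \ref{L:Strat}. Recall the argument there: the simple module $S_2$ has projective resolution $0 \to P_1 \to P_3 \to P_2 \to S_2 \to 0$, so $S_2$ is exceptional and, writing $e = e_1 + e_3$, one has $S_2 \cong A_2/A_2 e A_2$ with $A_2 e A_2$ projective as a left module; Remark \ref{R:Recollidemp} (via Remark \ref{R:QH}(b) and Proposition \ref{P:AdjTriple}) then yields a recollement $(\cd^b(k), \cd^b(A_2), \cd^b(eA_2e))$. Since $eA_2e \cong k\mathcal{K}$ for the Kronecker quiver $\mathcal{K}$, and the simple $k\mathcal{K}$-modules form a full exceptional sequence, Lemma \ref{L:ExcSeqCompSeries} gives a recollement $(\cd^b(k), \cd^b(k\mathcal{K}), \cd^b(k))$. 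Splicing these two recollements into a binary tree produces a derived composition series of $\cd^b(A_2)$ of length $3$ with all composition factors equal to $\cd^b(k)$.

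For the second clause, that $A_2$ is not derived equivalent to a quasi-hereditary algebra: suppose for contradiction that $\cd^b(A_2) \cong \cd^b(B)$ for some quasi-hereditary algebra $B$. By Corollary \ref{C:DerivedEq} (the ``derived unique'' statement for $A_2$), any algebra derived equivalent to $A_2$ is Morita equivalent to $A_2$; hence $B$ is Morita equivalent to $A_2$, so $A_2$ itself would carry a quasi-hereditary structure. But Lemma \ref{L:Notquasi} shows $A_2$ admits no quasi-hereditary structure: any heredity ideal in a basic algebra may, after refining, be taken to be generated by a primitive idempotent $e = e_i$ (Remark \ref{R:QH}(a)), and the existence of such an ideal forces the canonical functor $\cd^b(A_2/A_2 e A_2) \hookrightarrow \cd^b(A_2)$ to be full (Remark \ref{R:QH}(b)); however $A_2$ has relations between every pair of vertices, so $\Ext^2_{A_2}(S_i, S_j) \neq 0$ for the relevant simples, while $\Ext^2_{A_2/A_2 e A_2}$ vanishes, contradicting fullness. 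This contradiction completes the proof.

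There is essentially no obstacle here: all the real work has been done in Lemmas \ref{L:Strat} and \ref{L:Notquasi} and in Corollary \ref{C:DerivedEq} (the latter resting on Proposition \ref{P:Notequiv}, proved in Section \ref{S:Proof}). The only point deserving a word of care is the logical shape of the second half: being ``not derived equivalent to a quasi-hereditary algebra'' is genuinely stronger than ``not quasi-hereditary'', and it is exactly the derived-uniqueness of $A_2$ from Corollary \ref{C:DerivedEq} that bridges this gap — without it, Lemma \ref{L:Notquasi} alone would not suffice. Hence the proof is just: invoke Lemma \ref{L:Strat} for the first assertion, and combine Corollary \ref{C:DerivedEq} with Lemma \ref{L:Notquasi} for the second.
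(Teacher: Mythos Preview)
Your proposal is correct and follows exactly the paper's own proof: invoke Lemma~\ref{L:Strat} for the first clause, and combine Lemma~\ref{L:Notquasi} with Corollary~\ref{C:derivedisolated} (which is the same as Corollary~\ref{C:DerivedEq}) for the second. One minor inaccuracy in your recap of Lemma~\ref{L:Strat}: the paper does not assert that $A_2eA_2$ is projective as a left module, but rather uses directly that $S_2$ is exceptional to obtain full faithfulness of $\cd^b(A_2/A_2eA_2)\to\cd^b(A_2)$; since you are only invoking the lemma rather than reproving it, this does not affect the validity of your argument.
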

\begin{proof}
The first statement is Lemma \ref{L:Strat}. The second statement is a combination of Lemma \ref{L:Notquasi} and Corollary \ref{C:derivedisolated}. 
\end{proof}

\begin{rem}
For finite dimensional quiver algebras $A$ with two vertices Liu \& Yang show that Question \ref{Q:LY} has a positive answer. In other words if $\cd^b(A)$ admits a composition series by derived categories $\cd^b(k)$, then $A$ is derived equivalent to a quasi-hereditary algebra. In that sense $A_2$ is a minimal counterexample to the question. Note that $A_1$ admits a quasi-hereditary structure since $Q_1$ is directed. Since quasi-hereditary structures are not stable under derived equivalence (see Dlab \& Ringel's \cite[Example p.~283]{DR}) this cannot be used to distinguish the derived categories of $A_1$ \& $A_2$ directly. This motivates the following question. 

\end{rem}

\begin{Question}
Is it possible to characterise derived categories of quasi-hereditary algebras among algebraic triangulated categories? 
\end{Question}

We refer to \cite[Section 5]{ParshallScott} for a first answer to this question -- unfortunately, we are not able to use this characterisation to obtain an alternative proof of Proposition \ref{P:Notequiv}.

\section{Proof of Proposition \ref{P:Notequiv}}\label{S:Proof}

\subsection*{Outline of the proof}

We give a proof by contradiction consisting of the following two steps:

\begin{itemize}
\item[$(i)$] Assume that there exists a triangle equivalence $\Phi\colon \cd^b(A_1) \cong \cd^b(A_2)$. 

Rickard's derived Morita theory \cite{Rickard} shows that there exists a tilting object $T  \in \cd^b(A_1)$ such that there are isomorphisms of graded algebras 
\begin{align}\label{algiso}
\bigoplus_{i \in \ZZ} \Hom_{\cd^b(A_1)}(T, T[i]) \cong \Hom_{\cd^b(A_1)}(T, T) \cong A_2^{\rm op},
\end{align} 
where $A_2^{\rm op}$ is concentrated in degree $0$. Moreover,  $\Phi(T) \cong A_2$ in $\cd^b(A_2)$ and we can assume that $T \cong T_1 \oplus T_2 \oplus T_3$ with $T_i \in \cd^b(A_1)$ indecomposable.

We show that $T_1$ can be chosen to be the string module $1 \xrightarrow{a} 2 \xrightarrow{b} 3$ and $\Phi(T_1) \cong P_2$.
\item[$(ii)$] Let $T_1$ be as in $(i)$. Using the repetitive algebra of $A_1$, we show that there is no indecomposable object $T'$ in $\cd^b(A_1)$ such that
 \begin{align}\label{twocycle}
\begin{array}{lll}
&\bigoplus_{i \in \ZZ} \Hom_{\cd^b(A_1)}(T_1 \oplus T', (T_1 \oplus T')[i]) \cong \End_{\cd^b(A_1)}(T_1 \oplus T') \\ \\ &\cong k\left(\begin{xy}
\SelectTips{cm}{10}
\xymatrix@R=14pt{
T_1 \ar@/^/[rr]|{\displaystyle \,\, x  \,\, }  && T'  \ar@/^/[ll]|{\displaystyle \,\, y \,\, } 
} 
\end{xy}\right)/(xy, yx)=:C
\end{array}
\end{align}
as graded algebras with $C$ concentrated in degree $0$.

This shows that \eqref{algiso} is impossible. Indeed $\Phi$ is an equivalence with $\Phi(T) \cong A_2$ and $\Phi(T_1) \cong P_2$. Therefore, assuming \eqref{algiso}, we have a chain of (graded) isomorphisms \begin{align*} \bigoplus_{i \in \ZZ} \Hom_{\cd^b(A_1)}(T_1 \oplus T_j, (T_1 \oplus T_j)[i]) &\cong \End_{\cd^b(A_1)}(T_1 \oplus T_j) \\ &\cong \End_{\cd^b(A_2)}(\Phi(T_1) \oplus \Phi(T_j)) \\ & \cong \End_{A_2} (P_2 \oplus P_{j'}) \\ &\cong (e_2 + e_{j'}) A_2^{\rm op} (e_2 + e_{j'}) \end{align*}
with $j \in \{2, 3\}$ and $j' \in \{1, 3\}$. One can check that $(e_2+e_1)A_2^{\rm op}(e_2+e_1)\cong C \cong (e_2+e_3)A_2^{\rm op}(e_2+e_3)$. Contradicting the statement that there's no $T' \in \cd^b(A_1)$ satisfying \eqref{twocycle}. Therefore, there cannot exist a derived equivalence $\Phi$. 
 \end{itemize}

\subsection*{Step $(i)$} 
This follows from Corollary \ref{C:Key}. Indeed $A_2$ has AG-invariant $[2, 4]$ and $P_2=2 \xrightarrow{\alpha_1} 3 \xrightarrow{\beta} 1$ is an indecomposable projective $A_2$-module defined by a non-trivial permitted thread in the sense of Definition \ref{D:thread}. Since the $A_1$-module $1 \xrightarrow{a} 2 \xrightarrow{b} 3$ is also defined by a non-trivial permitted thread, Corollary \ref{C:Key} shows the claim.

 \begin{rem}\label{R:Extension}
This step can be generalised to other algebras appearing in Corollary \ref{C:RedBobMali}.
 
 Indeed, the indecomposable projective $B(p, p)$-modules $P_2, \ldots, P_{p+1}$ are strings given by permitted threads satisfying

\begin{align}
\nu(P_{p+1})=P_{p}, \nu(P_{p})=P_{p-1}, \ldots, \nu(P_3)=P_2.
\end{align}

Generalising Corollary \ref{C:Key}, one can show that a tilting object yielding a derived equivalence with $B(p, p)$ has to contain a corresponding sequence of direct  summands $T_2, \ldots, T_{p+1}$ defined by permitted threads.  

It would be interesting to show an analogue of step $(ii)$. Namely, that there is no indecomposable exceptional object $T'$ such that there are isomorphisms of graded algebras
\begin{align}
\begin{array}{ll}
\bigoplus_{i \in \ZZ} \Hom(T' \oplus T_2 \oplus \ldots \oplus T_{p+1}, (T' \oplus T_2 \oplus \ldots \oplus T_{p+1}) [i]) 
\cong \\ \\ \End(T' \oplus T_2 \oplus \ldots \oplus T_{p+1}) \cong \End_{B(p, p)}(P' \oplus P_2 \oplus \ldots \oplus P_{p+1}) \cong kC_{p+1}/J^2,
\end{array}
 \end{align}
 where $C_{p+1}$ denotes a quiver consisting of a single oriented $(p+1)$-cycle, $J\subseteq kC_{p+1}$ is the two-sided ideal generated by the arrows and $P'$ is isomorphic to $P_1$ or $P_{p+2}$.  
\end{rem}

\subsection*{Step $(ii)$} We use Happel's triangle equivalence $H\colon \cd^b(A) \cong \widehat{A}-\ul{\mod}$  for finite dimensional algebras $A$ of finite global dimension (see \cite{Happelbook}) to translate the claim to a question about (string) modules over the repetitive algebra $\widehat{A_1}$. We refer to \cite{ButlerRingel, CB, SZ, WW} for more details on string module combinatorics. We begin by describing the repetitive algebra $\widehat{A_1}$ following Ringel \cite{Ringel} and Schr{\"o}er \cite{Schroer}. Let

\begin{align}\label{E:Repetitive}
\begin{array}{c}
\begin{tikzpicture}[description/.style={fill=white,inner sep=2pt}]
    \matrix (n) [matrix of math nodes, row sep=1em, ampersand replacement=\&,
                 column sep=2.25em, text height=1.5ex, text depth=0.25ex,
                 inner sep=0pt, nodes={inner xsep=0.3333em, inner
ysep=0.3333em}]
    {  
      \widehat{Q_1}= \& \cdots \& -2 \& -1 \& 0 \& 1 \& 2 \&  \cdots  \\
    };
\path[-> ] (n-1-2) edge [bend left=20] node[ scale=0.75, fill=white] [midway] {$a$} 
(n-1-3);
\path[-> ] (n-1-2) edge [bend right=20] node[ scale=0.75, fill=white] [midway] {$b$} 
(n-1-3);
\path[-> ] (n-1-3) edge [bend left=20] node[ scale=0.75, fill=white] [midway] {$a$} 
(n-1-4);
\path[-> ] (n-1-3) edge [bend right=20] node[ scale=0.75, fill=white] [midway] {$b$} 
(n-1-4);
\path[-> ] (n-1-4) edge [bend left=20] node[ scale=0.75, fill=white] [midway] {$a$} 
(n-1-5);
\path[-> ] (n-1-4) edge [bend right=20] node[ scale=0.75, fill=white] [midway] {$b$} 
(n-1-5);
\path[-> ] (n-1-5) edge [bend left=20] node[ scale=0.75, fill=white] [midway] {$a$} 
(n-1-6);
\path[-> ] (n-1-5) edge [bend right=20] node[ scale=0.75, fill=white] [midway] {$b$} 
(n-1-6);
\path[-> ] (n-1-6) edge [bend left=20] node[ scale=0.75, fill=white] [midway] {$a$} 
(n-1-7);
\path[-> ] (n-1-6) edge [bend right=20] node[ scale=0.75, fill=white] [midway] {$b$} 
(n-1-7);
\path[-> ] (n-1-7) edge [bend left=20] node[ scale=0.75, fill=white] [midway] {$a$} 
(n-1-8);
\path[-> ] (n-1-7) edge [bend right=20] node[ scale=0.75, fill=white] [midway] {$b$} 
(n-1-8);
\end{tikzpicture}
\end{array}
\end{align}
and $\widehat{I}:=(a^2, b^2, aba-bab)$. Then the repetitive algebra of $A_1$ is given by $\widehat{A_1}=k\widehat{Q_1}/\widehat{I}$. In particular, the indecomposable projective $\widehat{A_1}$-modules have the following form:
 \begin{align}\label{E:projectives}
 \begin{array}{c}
\begin{xy}
\SelectTips{cm}{10}
\xymatrix@R=14pt{ & n+1 \ar[r]^b & n+2 \ar[rd]^a \\
n \ar[ru]^a \ar[rd]^b &&& n+3 \\
& n+1 \ar[r]^a & n+2 \ar[ru]^b
} 
\end{xy}
\end{array}
\end{align}

\subsubsection*{Preparations}
\begin{itemize}
\item[(a)] We can choose our notation in such a way that $H(1 \xrightarrow{a} 2 \xrightarrow{b} 3) \cong 1 \xrightarrow{a} 2 \xrightarrow{b} 3$.
\item[(b)]  The shift in the triangulated category $\widehat{A}-\ul{\mod}$ is given by the inverse syzygy functor $\Omega^{-1}$. 
\item[(c)] Let $A$ be a finite dimensional $k$-algebra and $n \in \ZZ_{\geq 1}$ then there are isomorphisms 
\begin{align}
\ul{\Hom}_{\widehat{A}}(\Omega^{n}(M),N) \cong \Ext^n_{\widehat{A}}(M, N) \cong \ul{\Hom}_{\widehat{A}}(M,\Omega^{-n}(N)),
\end{align}
where $M, N \in \widehat{A}-\mod$. This is well-known. One can use \cite[Exercises IV.7.2 \& IV.8.3]{HiltonStammbach} together with the fact that projective and injective $\widehat{A}$-modules coincide.
\item[(d)] Assume that there exists an indecomposable object $T'$ satisfying \eqref{twocycle}. In combination with (b) and (c), we see that 
\begin{align} \label{rigid}
\Ext^1_{\widehat{A_1}}(T_1 \oplus T', T_1 \oplus T')=0,
\end{align}
where by abuse of notation we write $T_1$ for $H(T_1)$ and $T'$ for $H(T')$. In particular, $T'$ is given by a string module as band modules always have self-extensions.

\item[(e)] Crawley-Boevey showed that homomorphism spaces $\Hom(M, N)$ between two (indecomposable) string modules $M$ \& $N$ have bases given by graph maps \cite{CB}. In conjunction with Proposition 3.7 of Schr{\"o}er \& Zimmermann \cite{SZ} and \eqref{rigid}, it follows that $\ul{\End}_{\widehat{A_1}}(T_1 \oplus T')$ is generated by (weakly) one-sided graph maps, i.e. maps between string modules $S_1=E\alpha F$ and $S_2=E \beta^- F'$
\[
\begin{tikzpicture}[description/.style={fill=white,inner sep=2pt}]
    \matrix (n) [matrix of math nodes, row sep=1em,
                 column sep=2.25em, text height=1.5ex, text depth=0.25ex,
                 inner sep=0pt, nodes={inner xsep=0.3333em, inner
ysep=0.3333em}]
    {  
      S_1= \quad &&\quad&  \quad    \\
       &&&& \quad &&\quad& \quad \\ \\ \\
       &&&& \quad &&\quad&  \quad    \\
       S_2=\quad &&\quad& \quad \\
    };
\draw[-, decorate, decoration={snake, amplitude=0.75mm} ] (n-1-1) --  node[ scale=0.75, yshift=4mm] [midway] {$E$} (n-1-4); 
\draw[->] ($(n-1-4) + (-9pt, 0.5pt)$) -- node[ scale=0.75, xshift=3mm, yshift=2mm] [midway] {$\alpha$}   ($(n-2-5) +(+9pt, -0.5pt)$);
\draw[-, decorate, decoration={snake, amplitude=0.75mm} ] (n-2-5) -- node[ scale=0.75, yshift=4mm] [midway] {$F$} 
(n-2-8);

\path[->, dashed] ($(n-1-3)+(-12pt, -5pt)$) edge node[ scale=0.75, xshift=4mm] [midway] {$\mathrm{id}$}  ($(n-6-3)+(-12pt, 15pt)$);
\path[->, dashed] ($(n-2-7)+(-13pt, -5pt)$) edge node[ scale=0.75, xshift=4mm] [midway] {$0$}  ($(n-5-7) +(-13pt, 15pt)$);

\draw[-, decorate, decoration={snake, amplitude=0.75mm} ] (n-6-1) --  node[ scale=0.75, yshift=4mm] [midway] {$E$} (n-6-4); 
\draw[->] ($(n-5-5)  + (9pt, 0.5pt)$)  -- node[ scale=0.75, xshift=3mm, yshift=5mm] [midway] {$\beta$}  ($(n-6-4) + (-9pt, -0.5pt)$);
\draw[-, decorate, decoration={snake, amplitude=0.75mm} ] (n-5-5) -- node[ scale=0.75, yshift=4mm] [midway] {$F'$} 
(n-5-8);

%


\end{tikzpicture}
\] 
defined as identity map from the factorstring $E$ of $S_1$ to the substring $E$ of $S_2$ and as zero everywhere else. Here $\alpha$ and $\beta$ are arrows in the quiver $\widehat{Q_1}$, $F$ is a substring of $S_1$ and $F'$ is a factorstring of $S_2$. 

\item[(f)] Using Happel's equivalence $H$, \eqref{twocycle} translates to the following statement. The stable endomorphism algebra $\ul{\End}_{\widehat{A_1}}(T_1 \oplus T')$ of $T_1 \oplus T'$ is isomorphic to $C=k(\begin{xy}
\SelectTips{cm}{10}
\xymatrix@R=8pt{
T_1 \ar@/^/[rr]|{\displaystyle \,\, \ul{x}  \,\, }  && T'  \ar@/^/[ll]|{\displaystyle \,\, \ul{y} \,\, } 
} 
\end{xy})/(\ul{x}\ul{y}, \ul{y}\ul{x})$. We show that this leads to a contradiction, see the last paragraph `Final step' below. By part (e), every stable morphism from $T_1$ to $T'$ is a linear combination of equivalence classes of (weakly) one-sided graph maps, which are given by a factorstring $E_f$ in $T_1$ and a corresponding substring $E_f$ in $T'$. In turn, stable morphisms from $T'$ to $T_1$ are generated by equivalence classes of maps given by substrings $E_s$ in $T_1$ and corresponding factorstrings $E_s$ in $T'$. 
Since $\ul{\Hom}_{\widehat{A_1}}(T_1,T') \cong k \ul{x}$ and $\ul{\Hom}_{\widehat{A_1}}(T',T_1) \cong k \ul{y}$ are one dimensional, 
there are (weakly) one sided graph maps $x \in \Hom_{\widehat{A_1}}(T_1,T')$ and $y \in \Hom_{\widehat{A_1}}(T',T_1)$ representing $\ul{x} \in \ul{\Hom}_{\widehat{A_1}}(T_1,T')$ and $\ul{y} \in \ul{\Hom}_{\widehat{A_1}}(T',T_1) $. 

\[
\begin{tikzpicture}[description/.style={fill=white,inner sep=2pt}]
    \matrix (n) [matrix of math nodes, row sep=1em,
                 column sep=2.25em, text height=1.5ex, text depth=0.25ex,
                 inner sep=0pt, nodes={inner xsep=0.3333em, inner
ysep=0.3333em}]
    {  
       &&&\quad     \\
       T_1 &&&\quad &\quad & \quad &\quad& \quad \\ 
       &&&&&\quad&  \quad    \\ \\ \\ \\
       &&&&&&\quad & \quad \\
       T' && \quad &&\quad&  \quad    \\
       \quad &\quad& \quad \\
       };
\draw[-, decorate, decoration={snake, amplitude=0.75mm} ] (n-1-4) --  node[ scale=0.75, yshift=4mm] [midway] {$E_f$} (n-2-5); 
\draw[->] ($(n-1-4) + (30pt, -18.5pt)$) -- ($(n-3-6) +  (-31.5pt, 18.5pt)$);

\draw[-, decorate, decoration={snake, amplitude=0.75mm} ] (n-2-5) -- node[ scale=0.75, yshift=4mm] [midway] {$E_s$} 
(n-3-6);

\path[->, dashed] ($(n-2-4) + (+15.4pt, 8.5pt)$)  edge[bend right=20] node[ scale=0.75, yshift=4mm] [midway] {$x$}  ($(n-9-2.north) +(-21pt, 6pt)$);
\path[->, dashed] ($(n-7-7) + (22pt, 18pt)$) edge[bend right=25] node[scale=0.75, yshift=4mm] [midway] {$y$}   ($(n-3-6) + (-10pt, 18pt)$);

\draw[-, decorate, decoration={snake, amplitude=0.75mm} ] (n-7-7) -- node[ scale=0.75, yshift=4mm] [midway] {$E_s$} (n-7-8); 

\draw[-, decorate, decoration={snake, amplitude=0.75mm} ] (n-9-1) -- node[ scale=0.75, yshift=4mm] [midway] {$E_f$} (n-9-2); 
\path[->] ($(n-8-3) + (+8.4pt, 0.5pt)$) edge   ($(n-9-2)+ (-8.4pt, -0.5pt)$);
\draw[-, decorate, decoration={snake, amplitude=0.75mm} ] (n-8-3) --
(n-8-6);

\path[->] ($(n-7-7) + (+7.8pt, 0.6pt)$) edge   ($(n-8-6) + (-8.4pt, -0.5pt)$);

%


\end{tikzpicture}
\] 
If the substring $E_f$ and the factorstring $E_s$ overlap inside $T'$, then $0 \neq yx \in \End_{\widehat{A_1}}(T_1)$. Since this space is one dimensional, we have $yx= \lambda \cdot \id_{T_1}$ for some $\lambda \in k^*$. By our assumption $\ul{yx}=\ul{y}\ul{x}=0$, so the identity endomorphism of $T_1$ factors over a projective injective $\widehat{A_1}$-module. This would imply that $T_1 \cong 0$ in the stable module category. Contradiction. So $E_f$ and $E_s$ must not overlap in $T'$. This is indicated in the picture above. 

\item[(g)] We claim that choosing $E_f=T_1=E_s$ implies $\ul{x}\ul{y} \neq 0$, which contradicts our assumption. 

We assume without loss of generality that $x$ and $y$ are given as in the picture above (in particular, $x$ is left-sided and $y$ is right-sided in the terminology of \cite{SZ}.
Since $T'$ is a string module we may write $T'=M(S)$ for a string $S$. Let $S^-$ be the inverse string of $S$, see e.g. \cite[p.4]{SZ}. Then there is a canonical isomorphism $\gamma \colon M(S^-) \to M(S)$. Now $ y \gamma$ and $x$ are both left-sided graph maps in the terminology of \cite{SZ}. It follows from \cite[Lemma 3.3]{SZ} that $\ul{x}\ul{y} \cdot \ul{\gamma}=\ul{x}\ul{y\gamma} \neq 0$.
(the lemma is applicable since $T_1$ is not simple, see Zimmermann's correction \cite{Zimmermann} based on Zhou's thesis \cite[preliminary chapter]{Zhou}). In particular, $\ul{x}\ul{y} \neq 0$ as claimed.

\item[(h)] Without loss of generality, we may assume $E_f=1$ or $E_f=1 \xrightarrow{a} 2$. 

To see this, we observe that the only other choice for  $E_f$ is $T_1$. If $E_f=T_1$, then $E_s \neq T_1$ by (g). Next, $k$-duality $D=\Hom_k(-, k)$ defines an exact anti-autoequivalence of $\widehat{A_1}-\ul{\mod}$. Since $C \cong C^{\rm op}$, the pair of modules $T_1, T'$ satisfies \eqref{twocycle} if and only if the pair $D(T_1), D(T')$ does. Under this duality $D(E_s) \neq D(T_1)$ is a factor string in $D(T_1)$. This proves the claim.
\end{itemize}

\bigskip

The following construction is well-known, see e.g.~\cite[Lemma 3.5]{SZ}. It is used in the final step below to show that every candidate for the string module $T'$ has non-trivial self-extensions, contradicting \eqref{rigid}. 
\begin{lem}\label{L:exact}
Let $S_1$ be the following string
\begin{align*}
\begin{tikzpicture}[description/.style={fill=white,inner sep=2pt}]
    \matrix (n) [matrix of math nodes, ampersand replacement=\&, row sep=1em,
                 column sep=2.25em, text height=1.5ex, text depth=0.25ex,
                 inner sep=0pt, nodes={inner xsep=0.3333em, inner
ysep=0.3333em}]
    {  \&\&\& \quad  \&\& \quad \\
       \quad \&\&\quad \&\&\&\& \quad \&\& \quad \\
       };
\draw[-, decorate, decoration={snake, amplitude=0.75mm} ] (n-2-1) -- node[ scale=0.75, yshift=5mm] [midway] {$U_l$} (n-2-3);
\draw[-, decorate, decoration={snake, amplitude=0.75mm} ] (n-1-4) -- node[ scale=0.75, yshift=5mm] [midway] {$M$} (n-1-6);
\draw[-, decorate, decoration={snake, amplitude=0.75mm} ] (n-2-7) -- node[ scale=0.75, yshift=5mm] [midway] {$U_r$} (n-2-9);
\path[->] ($(n-1-4) + (9pt, 0.7pt)$) edge node[ scale=0.75, yshift=4mm, xshift=0] [midway] {$\lambda_1$}  ($(n-2-3) + (-9pt, -0.8pt)$) ;
\path[->] ($(n-1-6)+ (-9pt, 0.5pt)$) edge node[ scale=0.75, yshift=4mm, xshift=+0mm] [midway] {$\rho_1$}  ($(n-2-7) + (9pt, -0.8pt)$) ;
\end{tikzpicture}
\end{align*} 
and let $S_2$ be given by
\begin{align*}
\begin{tikzpicture}[description/.style={fill=white,inner sep=2pt}]
    \matrix (n) [matrix of math nodes, ampersand replacement=\&, row sep=1em,
                 column sep=2.25em, text height=1.5ex, text depth=0.25ex,
                 inner sep=0pt, nodes={inner xsep=0.3333em, inner
ysep=0.3333em}]
    {  
    \quad \&\&\quad \&\&\&\& \quad \&\& \quad \\
    \&\&\& \quad  \&\& \quad \\
       };
\draw[-, decorate, decoration={snake, amplitude=0.75mm} ] (n-1-1) -- node[ scale=0.75, yshift=5mm] [midway] {$F_l$} (n-1-3);
\draw[-, decorate, decoration={snake, amplitude=0.75mm} ] (n-2-4) -- node[ scale=0.75, yshift=5mm] [midway] {$M$} (n-2-6);
\draw[-, decorate, decoration={snake, amplitude=0.75mm} ] (n-1-7) -- node[ scale=0.75, yshift=5mm] [midway] {$F_r$} (n-1-9);
\path[->] ($(n-1-3) + (-9pt, 0.5pt)$) edge node[ scale=0.75, yshift=-2mm, xshift=-5mm] [midway] {$\lambda_2$}  ($(n-2-4) + (9pt, -0.8pt)$) ;
\path[->] ($(n-1-7)  + (9pt, 0.7pt)$) edge node[ scale=0.75, yshift=-2mm, xshift=+5mm] [midway] {$\rho_2$}  ($(n-2-6) + (-9pt, -0.8pt)$);
\end{tikzpicture}
\end{align*} 
Here $U_l, U_r, F_l, F_r$ are allowed to be empty -- for example, \eqref{E:Flempty} shows $S_2$ with empty $F_l$.
There exists a  short exact sequence

\begin{align*}
\begin{tikzpicture}[description/.style={fill=white,inner sep=2pt}]
    \matrix (n) [matrix of math nodes, ampersand replacement=\&, row sep=0.5em,
                 column sep=0.5em, text height=1.5ex, text depth=0.25ex,
                 inner sep=0pt, nodes={inner xsep=0.3333em, inner
ysep=0.3333em}]
    {                                          \&\&\&\&\&\&\quad \&  \&\&\& \quad \& \quad\\
                                                                   \&\&\&\&\&\&\&\& \quad \&  \quad                                   \\ 
        \&\& \quad  \& \quad               \&\&\& \quad \& \quad                                             \&\&\& \&  \quad \&\quad \&\&\& \quad \& \quad\\
                                                                                \&\&\&\& \quad \&\&\& \quad \& \bigoplus \&\quad \&\&\& \quad \\ 
        \quad \&\quad \&\&\& \quad \& \quad             \&  \quad \& \quad                                        \&\&\& \& \&\& \quad  \& \quad \\
                                                                             \&\&\&\&\&\&\&\& \quad \& \quad                                                          \\
                                                                                     \&\&\&\&\&\& \quad \&\&\&\& \quad \& \quad  \\
       };
\draw[->, dashed] (n-4-5) -- 
(n-4-8);
\draw[->, dashed] (n-4-10) -- 
(n-4-13);
\draw[-, decorate, decoration={snake, amplitude=0.75mm} ] ($(n-5-1) + (2pt, 0)$) -- node[ scale=0.75, yshift=5mm] [midway] {$U_l$} ($(n-5-2) + (2pt, 0)$);
\draw[-, decorate, decoration={snake, amplitude=0.75mm} ] ($(n-3-3) + (2pt, 0)$) -- node[ scale=0.75, yshift=5mm] [midway] {$M$} ($(n-3-4)+ (2pt, 0)$);
\draw[-, decorate, decoration={snake, amplitude=0.75mm} ] ($(n-5-5)+ (2pt, 0)$) -- node[ scale=0.75, yshift=5mm] [midway] {$U_r$} ($(n-5-6) + (2pt, 0)$);
\draw[->] ($(n-3-3)  + (1.4pt, 0.1pt)$) -- node[ scale=0.75, xshift=-4mm, yshift=1mm] [midway] {$\lambda_1$}  ($(n-5-2)+ (1.8pt, 0.4pt)$);
\draw[->] ($(n-3-4) + (1.6pt, -0.4pt)$)  -- node[ scale=0.75, xshift=4mm, yshift=1mm] [midway] {$\rho_1$}   ($(n-5-5)+ (0.8pt, 0.4pt)$);
\draw[decoration={brace, amplitude=2mm}, decorate] ($(n-1-7.west) +(0, 5.5mm)$) -- node[ scale=0.75, yshift=6.5mm] [midway] {$=:M_1$} ($(n-1-11.east)+(1, 5.5mm)$);
\draw[-, decorate, decoration={snake, amplitude=0.75mm} ] ($(n-3-7) + (1pt, 0)$)  -- node[ scale=0.75, yshift=5mm] [midway] {$U_l$} ($(n-3-8) + (1pt, 0)$) ;
\draw[-, decorate, decoration={snake, amplitude=0.75mm} ] ($(n-2-9) + (1pt, 0)$)  -- node[ scale=0.75, yshift=5mm] [midway] {$M$} ($(n-2-10) + (1pt, 0)$) ;
\draw[-, decorate, decoration={snake, amplitude=0.75mm} ] ($(n-1-11) + (1pt, 0)$)  -- node[ scale=0.75, yshift=5mm] [midway] {$F_r$} ($(n-1-12) + (1pt, 0)$) ;
\draw[->] ($(n-2-9) + (0.7pt, 0.05pt)$)-- node[ scale=0.75, xshift=-2mm, yshift=2.5mm] [midway] {$\lambda_1$} ($ (n-3-8) + (0.9pt, 0.2pt)$);
\draw[->] ($(n-1-11) + (0.7pt, 0.05pt)$) -- node[ scale=0.75, xshift=-2mm, yshift=2.5mm] [midway] {$\rho_2$}  ($(n-2-10) + (0.9pt, 0.2pt)$);
\draw[decoration={brace, amplitude=2mm}, decorate]  ($(n-7-11.east)+(1, -5.5mm)$) -- node[ scale=0.75, yshift=-6.5mm] [midway] {$=:M_2$}  ($(n-7-7.west) +(0, -5.5mm)$);
\draw[-, decorate, decoration={snake, amplitude=0.75mm} ] ($(n-5-7) + (1pt, 0)$) -- node[ scale=0.75, yshift=5mm] [midway] {$F_l$} ($(n-5-8) + (1pt, 0)$) ;
\draw[-, decorate, decoration={snake, amplitude=0.75mm} ] ($(n-6-9) + (1pt, 0)$)  -- node[ scale=0.75, yshift=5mm] [midway] {$M$} ($(n-6-10)+ (1pt, 0)$) ;
\draw[-, decorate, decoration={snake, amplitude=0.75mm} ] ($(n-7-11) + (1pt, 0)$)  -- node[ scale=0.75, yshift=5mm] [midway] {$U_r$} ($(n-7-12) + (1pt, 0)$) ;
\draw[->] ($(n-5-8) + (1.6pt, -0.4pt)$)-- node[ scale=0.75, xshift=2mm, yshift=2.5mm] [midway] {$\lambda_2$}   ($(n-6-9) + (0.8pt, 0.4pt)$);
\draw[->] ($(n-6-10) + (1.6pt, -0.4pt)$) -- node[ scale=0.75, xshift=2mm, yshift=2.5mm] [midway] {$\rho_1$}  ($(n-7-11)  + (0.8pt, 0.4pt)$);
\draw[-, decorate, decoration={snake, amplitude=0.75mm} ] ($(n-3-12) + (2pt, 0)$)-- node[ scale=0.75, yshift=5mm] [midway] {$F_l$} ($(n-3-13) + (2pt, 0)$);
\draw[-, decorate, decoration={snake, amplitude=0.75mm} ] ($(n-5-14) + (2pt, 0)$) -- node[ scale=0.75, yshift=5mm] [midway] {$M$} ($(n-5-15) + (2pt, 0)$);
\draw[-, decorate, decoration={snake, amplitude=0.75mm} ] ($(n-3-16) + (2pt, 0)$) -- node[ scale=0.75, yshift=5mm] [midway] {$F_r$} ($(n-3-17) + (2pt, 0)$);
\draw[->] ($(n-3-13) + (1.6pt, -0.4pt)$)  -- node[ scale=0.75, xshift=-4mm, yshift=1mm] [midway] {$\lambda_2$}   ($(n-5-14) + (0.8pt, 0.4pt)$);
\draw[->] ($(n-3-16) + (1.4pt, 0.1pt)$) -- node[ scale=0.75, xshift=4mm, yshift=1mm] [midway]  {$\rho_2$}  ($(n-5-15) + (1.8pt, 0.4pt)$);
\end{tikzpicture}
\end{align*}

provided $M_1$ and $M_2$ are well-defined strings. This sequence does not split provided at least one of  $U_l$ and $F_l$ is non-empty and at least one of $U_r$ and $F_l$ is non-empty. 

\end{lem}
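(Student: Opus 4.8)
The plan is to realise the two maps in the asserted sequence as sums of graph maps between string modules over $\widehat{A_1}$, to deduce exactness from a dimension count, and to prove non-splitting by a Krull--Schmidt comparison; the exactness part is essentially \cite[Lemma 3.5]{SZ}, so I only sketch it. Throughout I would use the standard string-module conventions (see \cite{ButlerRingel, CB}): $M(w)$ denotes the string module of a string $w$; one has $M(w)\cong M(w')$ precisely when $w'\in\{w,w^{-1}\}$, and string modules are never isomorphic to band modules; a walk passing ``along the top'' over a substring $c$ of $w$ yields a canonical epimorphism $M(w)\twoheadrightarrow M(c)$, and a walk passing ``along the bottom'' over $c$ yields a canonical monomorphism $M(c)\hookrightarrow M(w)$. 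In this language the hypotheses say exactly that $M$ occurs at a peak of $S_1$, with the formal arrows $\lambda_1,\rho_1$ descending to $U_l$ and $U_r$, and at a valley of $S_2$, with $\lambda_2,\rho_2$ ascending to $F_l$ and $F_r$, and that the recombined walks $M_1=U_l\,\lambda_1\,M\,\rho_2\,F_r$ and $M_2=F_l\,\lambda_2\,M\,\rho_1\,U_r$ are again reduced strings.

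First I would write down the morphism $\iota=(\iota_1,\iota_2)\colon M(S_1)\to M(M_1)\oplus M(M_2)$ and the morphism $\pi=(\pi_1,-\pi_2)\colon M(M_1)\oplus M(M_2)\to M(S_2)$, where $\iota_1$ and $\pi_2$ are the graph maps supported on the common left segment $U_l\,\lambda_1\,M$ of $S_1$ and $M_1$, respectively the common left segment $F_l\,\lambda_2\,M$ of $M_2$ and $S_2$, while $\iota_2$ and $\pi_1$ are the graph maps supported on the common right segment $M\,\rho_1\,U_r$ of $S_1$ and $M_2$, respectively the common right segment $M\,\rho_2\,F_r$ of $M_1$ and $S_2$. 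A direct inspection on the standard bases then shows that each $\iota_i,\pi_i$ is a homomorphism of $\widehat{A_1}$-modules, that $\pi\circ\iota=0$ (here the sign is essential), that $\iota$ is a monomorphism and $\pi$ an epimorphism. Since $\dim M(M_1)+\dim M(M_2)=\dim M(S_1)+\dim M(S_2)$ --- both sides count the vertices of $U_l,U_r,F_l,F_r$ once and those of $M$ twice --- exactness in the middle is then automatic, and we obtain the short exact sequence $0\to M(S_1)\xrightarrow{\iota} M(M_1)\oplus M(M_2)\xrightarrow{\pi} M(S_2)\to 0$ of the statement.

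For the non-splitting claim, assume that the sequence splits; then $M(M_1)\oplus M(M_2)\cong M(S_1)\oplus M(S_2)$, so by Krull--Schmidt and the classification of string modules recalled above, $\{M_1,M_2\}=\{S_1,S_2\}$ as multisets of strings up to inversion. But $M_1$ and $S_1$ have the same left segment $U_l\,\lambda_1\,M$ while differing to the right of $M$: in $S_1$ the letter after $M$ is the arrow $\rho_1$ whose source is the right endpoint $v$ of $M$, whereas in $M_1$ the letter after $M$ is $\rho_2$ whose target is $v$, and $\rho_1\neq\rho_2$ since $\widehat{A_1}$ has no loops (and if $U_r$ or $F_r$ is empty the two strings even have different lengths); this is where the hypothesis that at least one of $U_r,F_r$ is non-empty enters. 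Symmetrically, $M_1$ and $S_2$ share the right segment $M\,\rho_2\,F_r$ but differ to the left because $\lambda_1\neq\lambda_2$, using that at least one of $U_l,F_l$ is non-empty; the same local comparison of the connecting letters also excludes $M_1\cong S_1^{-1}$ and $M_1\cong S_2^{-1}$. Hence $M_1$ is isomorphic to neither $S_1$ nor $S_2$, contradicting $\{M_1,M_2\}=\{S_1,S_2\}$, and so the sequence does not split.

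The hard part is really only the bookkeeping in the second paragraph: fixing the orientations of the formal arrows $\lambda_i,\rho_i$ and checking that the four graph maps are genuine $\widehat{A_1}$-module homomorphisms whose matrix exhibits $M(M_1)\oplus M(M_2)$ --- rather than some other recombination of the two strings --- as the middle term. Once $M_1$ and $M_2$ have been read off from the picture, the dimension count and the non-splitting case analysis are immediate.
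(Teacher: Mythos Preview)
The paper does not prove this lemma at all: it is stated as well-known with a reference to \cite[Lemma~3.5]{SZ}, and no argument is given. Your proof is correct and follows exactly the standard route one finds behind that reference --- construct the four graph maps, check $\pi\iota=0$, injectivity and surjectivity on the string bases, and conclude exactness from the dimension count --- so there is nothing to compare on the exactness side.

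For the non-splitting claim your Krull--Schmidt argument is fine in spirit, but the sentence ``the same local comparison of the connecting letters also excludes $M_1\cong S_1^{-1}$ and $M_1\cong S_2^{-1}$'' is doing real work and deserves a line more: you should say explicitly that in $M_1$ the substring $M$ sits with an outgoing arrow $\lambda_1$ on one side and an incoming arrow $\rho_2$ on the other (so $M$ is neither a peak nor a valley there), whereas in $S_1$ every occurrence of $M$ flanked by $\lambda_1$ has $M$ at a peak, and in $S_2$ every occurrence flanked by $\rho_2$ has $M$ at a valley; this local shape is preserved under inversion of the string, which is what kills the $S_i^{-1}$ cases. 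Alternatively, and perhaps more in keeping with \cite{SZ}, one can avoid Krull--Schmidt entirely by observing that the two-sided graph map $M(S_1)\to M(S_2)$ determined by the common piece $M$ (which is nonzero precisely under your non-emptiness hypotheses) represents the extension class, and a two-sided graph map is never a linear combination of one-sided ones, hence is nonzero in $\ul{\Hom}$ and gives a nonzero element of $\Ext^1$.
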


\subsubsection*{Final step} \label{Splitting the string}

Assume that there exists $T'$ satisfying \eqref{twocycle}. Then the 
 preliminary step (f) above implies that there is a pair $(E_s, E_f)$ consisting of a substring $E_s$ and 
a factorstring $E_f$ in $T_1$. Moreover, $E_s$ appears as a factorstring  and $E_f$ as a substring in $T'$ which do not overlap. By step (h), we may assume $E_f= 1$ or $1 \xrightarrow{a} 2$. We treat the case $E_f=1$ first and show that the other case follows from this.

 We prove below that we can assume that $T'$ has the following form (with $E_f=1$ appearing as a substring on the left):

\begin{align}\tag{S0} \label{E:S0}
\begin{array}{c}
\begin{tikzpicture}[description/.style={fill=white,inner sep=2pt}]
    \matrix (n) [matrix of math nodes, ampersand replacement=\&, row sep=1em,
                 column sep=2.25em, text height=1.5ex, text depth=0.25ex,
                 inner sep=0pt, nodes={inner xsep=0.3333em, inner
ysep=0.3333em}]
    {  
         \& 0    \\
        1 \&\& 1 \&\&\&\&\& \quad \\
       };
\draw[-, decorate, decoration={snake, amplitude=0.75mm} ] (n-2-3) -- node[ scale=0.75, yshift=5mm] [midway] {$S$} (n-2-8);
\draw[->] (n-1-2) edge node[ scale=0.75, yshift=2mm, xshift=-5mm] [midway] {$a$}  (n-2-1);
\draw[->] (n-1-2) edge node[ scale=0.75, yshift=2mm, xshift=+5mm] [midway] {$b$}  (n-2-3);
\end{tikzpicture}
\end{array}
\end{align} 

We note that we have to start with the arrow $a$. Otherwise, the morphism $x \colon T_1 \ra T'$ would factor over the indecomposable projective $\widehat{A_1}$-module $P_0$ (see \eqref{E:projectives}), so $\ul{x}=0 \in \ul{\Hom}_{\widehat{A_1}}(T_1, T')$. Moreover, the string defining $T'$ has to reach vertex $3$ at some point (otherwise we don't get a non-zero morphism to $T_1$). This forces $S$ to have the following shape 

\[
\begin{tikzpicture}[description/.style={fill=white,inner sep=2pt}]
    \matrix (n) [matrix of math nodes, row sep=1em,
                 column sep=2.25em, text height=1.5ex, text depth=0.25ex,
                 inner sep=0pt, nodes={inner xsep=0.3333em, inner
ysep=0.3333em}]
    {  
         \quad &&1&& 1    \\
                   &&&2&& 2 && \quad \\
       };
\draw[-, decorate, decoration={snake, amplitude=0.75mm} ] (n-2-6) --  (n-2-8);
\draw[-, decorate, decoration={snake, amplitude=0.75mm} ] (n-1-1) --  (n-1-3);

\draw[->] (n-1-3) edge node[ scale=0.75, yshift=-2mm, xshift=-5mm] [midway] {$a$}  (n-2-4);
\draw[->] (n-1-5) edge node[ scale=0.75, yshift=-2mm, xshift=+4mm] [midway] {$b$}  (n-2-4);
\draw[->] (n-1-5) edge node[ scale=0.75, yshift=-2mm, xshift=-4mm] [midway] {$a$}  (n-2-6);

\end{tikzpicture}
\] 

(indeed otherwise we can never reach a vertex smaller than $2$ since the longest paths without relations in $(\widehat{Q_1}, \widehat{I})$ have length $2$). 

We apply Lemma \ref{L:exact} to obtain a non-trivial self-extension of the string $T'$ contradicting \eqref{rigid}. In order to do this, we write $T'$ in two different ways:

\[
\begin{tikzpicture}[description/.style={fill=white,inner sep=2pt}]
    \matrix (n) [matrix of math nodes, row sep=1em,
                 column sep=1.7em, text height=1.5ex, text depth=0.25ex,
                 inner sep=0pt, nodes={inner xsep=0.3333em, inner
ysep=0.3333em}]
    {  && 0 & \quad&&&&&&& \quad   \\
       S_1:=T'= &1 && 1 &&1&& 1   \\
                       &\quad&& &&&2&& 2 && \quad \\
       };
\draw[-, decorate, decoration={snake, amplitude=0.75mm} ] (n-3-9) --  (n-3-11);
\draw[-, decorate, decoration={snake, amplitude=0.75mm} ] (n-2-4) --  (n-2-6);

\draw[->] (n-2-6) edge node[ scale=0.75, yshift=-2mm, xshift=-5mm] [midway] {$a$}  (n-3-7);
\draw[->] (n-2-8) edge node[ scale=0.75, yshift=-2mm, xshift=+4mm] [midway] {$b$}  (n-3-7);
\draw[->] (n-2-8) edge node[ scale=0.75, yshift=-2mm, xshift=-4mm] [midway] {$a$}  (n-3-9);

\draw[->] (n-1-3) edge node[ scale=0.75, yshift=2mm, xshift=-5mm] [midway] {$a$}  (n-2-2);
\draw[->] (n-1-3) edge node[ scale=0.75, yshift=2mm, xshift=+5mm] [midway] {$b$}  (n-2-4);

\draw[decoration={brace, amplitude=1.5mm}, decorate]  ($(n-2-8.west) +(-0.2mm, 3.5mm)$) -- node[ scale=0.75, yshift=6.5mm] [midway] {$=:M$}   ($(n-2-8.east)+(0.2mm, 3.5mm)$) ;

\draw[decoration={brace, amplitude=1.5mm}, decorate]  ($(n-1-4.west) +(+6.2mm, 3.5mm)$) -- node[ scale=0.75, yshift=6.5mm] [midway] {$:=S$}   ($(n-1-11.east)+(0.2mm, 3.5mm)$) ;

\draw[decoration={brace, amplitude=1.5mm}, decorate]   ($(n-3-7.east)+(0.2mm, -5.5mm)$) -- node[ scale=0.75, yshift=-6.5mm] [midway] {$=:U_l$}     ($(n-3-2.west) +(-0.2mm, -5.5mm)$);

\draw[decoration={brace, amplitude=1.5mm}, decorate]   ($(n-3-11.east)+(-3.5mm, -5.5mm)$) -- node[ scale=0.75, yshift=-6.5mm] [midway] {$=:U_r$}     ($(n-3-9.west) +(-0.2mm, -5.5mm)$);
 
\end{tikzpicture}
\]

\begin{align}\label{E:Flempty}
\begin{split}
\begin{tikzpicture}[description/.style={fill=white,inner sep=2pt}]
    \matrix (n) [matrix of math nodes, ampersand replacement=\&, row sep=1em,
                 column sep=2.25em, text height=1.5ex, text depth=0.25ex,
                 inner sep=0pt, nodes={inner xsep=0.3333em, inner
ysep=0.3333em}]
    {  
              \&\& 0 \&\&\&\& \&\& \quad    \\
       S_2:=T'= \& 1 \&\& 1 \&\&\&\&\& \quad \\
       };
\draw[-, decorate, decoration={snake, amplitude=0.75mm} ] (n-2-4) -- node[ scale=0.75, yshift=5mm] [midway] {$S$} (n-2-9);
\draw[->] (n-1-3) edge node[ scale=0.75, yshift=2mm, xshift=-5mm] [midway] {$a$}  (n-2-2);
\draw[->] (n-1-3) edge node[ scale=0.75, yshift=2mm, xshift=+5mm] [midway] {$b$}  (n-2-4);
\draw[decoration={brace, amplitude=1.5mm}, decorate]  ($(n-2-2.east)+(0.2mm, -3.5mm)$) -- node[ scale=0.75, yshift=-6.5mm] [midway] {$=:M$}  ($(n-2-2.west) +(-0.2mm, -3.5mm)$) ;
\draw[decoration={brace, amplitude=1.5mm}, decorate]  ($(n-1-3.west) +(-0.2mm, 5.5mm)$) -- node[ scale=0.75, yshift=6.5mm] [midway] {$=:F_r$}   ($(n-1-9.east)+(0.2mm, 5.5mm)$)  ;
 \end{tikzpicture}
 \end{split}
\end{align} 

Since the string modules

\[
\begin{tikzpicture}[description/.style={fill=white,inner sep=2pt}]
    \matrix (n) [matrix of math nodes, row sep=1em,
                 column sep=1.7em, text height=1.5ex, text depth=0.25ex,
                 inner sep=0pt, nodes={inner xsep=0.3333em, inner
ysep=0.3333em}]
    {  && 0 &&&&& \quad &                 0  \\
       M_1= &1 && 1 &&1&& 1  && 1 && \quad  \\
                &\quad&& &&&2 & & \quad &&& \quad \\
       };
\draw[-, decorate, decoration={snake, amplitude=0.75mm} ]   (n-2-10) -- node[ scale=0.75, yshift=5mm] [midway]  {$S$}  (n-2-12);
\draw[-, decorate, decoration={snake, amplitude=0.75mm} ] (n-2-4) --  (n-2-6);

\draw[->] (n-2-6) edge node[ scale=0.75, yshift=-2mm, xshift=-5mm] [midway] {$a$}  (n-3-7);
\draw[->] (n-2-8) edge node[ scale=0.75, yshift=-2mm, xshift=+4mm] [midway] {$b$}  (n-3-7);
\draw[->] (n-1-9) edge node[ scale=0.75, yshift=-2mm, xshift=+4mm] [midway] {$a$}  (n-2-8);
\draw[->] (n-1-9) edge node[ scale=0.75, yshift=-2mm, xshift=-4mm] [midway] {$b$}  (n-2-10);

\draw[->] (n-1-3) edge node[ scale=0.75, yshift=2mm, xshift=-5mm] [midway] {$a$}  (n-2-2);
\draw[->] (n-1-3) edge node[ scale=0.75, yshift=2mm, xshift=+5mm] [midway] {$b$}  (n-2-4);

\draw[decoration={brace, amplitude=1.5mm}, decorate]  ($(n-1-8.west) +(-0.2mm, 3.5mm)$) -- node[ scale=0.75, yshift=6.5mm] [midway] {$=:M$}   ($(n-1-8.east)+(0.2mm, 3.5mm)$) ;

\draw[decoration={brace, amplitude=1.5mm}, decorate]   ($(n-3-7.east)+(0.2mm, -5.5mm)$) -- node[ scale=0.75, yshift=-6.5mm] [midway] {$=:U_l$}     ($(n-3-2.west) +(-0.2mm, -5.5mm)$);

\draw[decoration={brace, amplitude=1.5mm}, decorate]   ($(n-3-12.east)+(-3.5mm, -5.5mm)$) -- node[ scale=0.75, yshift=-6.5mm] [midway] {$=:F_r$}     ($(n-3-9.west) +(-0.2mm, -5.5mm)$);
 
\end{tikzpicture}
\] 

and 

\[
\begin{tikzpicture}[description/.style={fill=white,inner sep=2pt}]
    \matrix (n) [matrix of math nodes, row sep=1em,
                 column sep=1.7em, text height=1.5ex, text depth=0.25ex,
                 inner sep=0pt, nodes={inner xsep=0.3333em, inner
ysep=0.3333em}]
    {  M_2= & 1    \\
                 && 2 && \quad \\
       };
\draw[-, decorate, decoration={snake, amplitude=0.75mm} ] (n-2-3) --  (n-2-5);

\draw[->] (n-1-2) edge node[ scale=0.75, yshift=-2mm, xshift=-4mm] [midway] {$a$}  (n-2-3);

\draw[decoration={brace, amplitude=1.5mm}, decorate]  ($(n-1-2.west) +(-0.2mm, 3.5mm)$) -- node[ scale=0.75, yshift=6.5mm] [midway] {$=:M$}   ($(n-1-2.east)+(0.2mm, 3.5mm)$) ;

\draw[decoration={brace, amplitude=1.5mm}, decorate]   ($(n-2-5.east)+(-3.5mm, -5.5mm)$) -- node[ scale=0.75, yshift=-6.5mm] [midway] {$=:U_r$}     ($(n-2-3.west) +(-0.2mm, -5.5mm)$);
 
\end{tikzpicture}
\] 
exist, Lemma \ref{L:exact} shows $\Ext^1_{\widehat{A_1}}(T', T') \neq 0$ as desired.

It remains to show that we can assume that $T'$ starts as indicated in the picture \eqref{E:S0} above. Since we want $1$ to be a substring there are the following two other possibilities -- where $S_1$ and $S_2$ have to be non-trivial (indeed, otherwise there is no non-zero morphism $T' \to T_1$).

\begin{align}\tag{S1}
\begin{array}{c}
\begin{tikzpicture}[description/.style={fill=white,inner sep=2pt}]
    \matrix (n) [matrix of math nodes, ampersand replacement=\&, row sep=1em,
                 column sep=2.25em, text height=1.5ex, text depth=0.25ex,
                 inner sep=0pt, nodes={inner xsep=0.3333em, inner
ysep=0.3333em}]
    {    \&\& -1 \\
         \&0 \&\& 0 \&\&0 \&\&\&\&\& \quad    \\
        1 \&\&\&\& 1  \\
       };
\draw[-, decorate, decoration={snake, amplitude=0.75mm} ] (n-2-6) -- node[ scale=0.75, yshift=4.5mm] [midway] {$S_1$} (n-2-11);
\draw[->] (n-1-3) edge node[ scale=0.75, yshift=4mm, xshift=-3.7mm] [midway] {$b$}  (n-2-2);
\draw[->] (n-1-3) edge node[ scale=0.75, yshift=4mm, xshift=+3.7mm] [midway] {$a$}  (n-2-4);
\draw[->] (n-2-2) edge node[ scale=0.75, yshift=4mm, xshift=-3.7mm] [midway] {$a$}  (n-3-1); 
\draw[->] (n-2-4) edge node[ scale=0.75, yshift=4mm, xshift=+3.7mm] [midway] {$b$}  (n-3-5); 
\draw[->] (n-2-6) edge node[ scale=0.75, yshift=4mm, xshift=-3.7mm] [midway] {$a$}  (n-3-5);
\end{tikzpicture}
\end{array}
\end{align}

\begin{align}\tag{S2}
\begin{array}{c}
\begin{tikzpicture}[description/.style={fill=white,inner sep=2pt}]
    \matrix (n) [matrix of math nodes, ampersand replacement=\&, row sep=1em,
                 column sep=2.25em, text height=1.5ex, text depth=0.25ex,
                 inner sep=0pt, nodes={inner xsep=0.3333em, inner
ysep=0.3333em}]
    {    \&\& -1 \&\&-1 \&\&\&\&\& \quad \\
         \&0 \&\& 0     \\
        1   \\
       };
\draw[-, decorate, decoration={snake, amplitude=0.75mm} ] (n-1-5) -- node[ scale=0.75, yshift=4.5mm] [midway] {$S_2$}  (n-1-10);
\draw[->] (n-1-3) edge node[ scale=0.75, yshift=4mm, xshift=-3.7mm] [midway] {$b$}  (n-2-2);
\draw[->] (n-1-3) edge node[ scale=0.75, yshift=4mm, xshift=+3.7mm] [midway] {$a$}  (n-2-4);
\draw[->] (n-2-2) edge node[ scale=0.75, yshift=4mm, xshift=-3.7mm] [midway] {$a$}  (n-3-1); 
\draw[->] (n-1-5) edge node[ scale=0.75, yshift=4mm, xshift=-3.7mm] [midway] {$b$}  (n-2-4); 
\end{tikzpicture}
\end{array}
\end{align}

If $T'$ starts as in (S1), we apply the syzygy functor $\Omega$ to $T_1 \oplus T'$.
 Using \eqref{E:projectives}, we compute

\begin{align*}
\Omega(T_1)=2 \xrightarrow{a} 3 \xrightarrow{b} 4 
\end{align*}

and

\[
\begin{tikzpicture}[description/.style={fill=white,inner sep=2pt}]
    \matrix (n) [matrix of math nodes, row sep=1em,
                 column sep=2.25em, text height=1.5ex, text depth=0.25ex,
                 inner sep=0pt, nodes={inner xsep=0.3333em, inner
ysep=0.3333em}]
    {  
         &&1    \\
        \Omega(T')=& 2 && 2 &&&&& \quad \\
       };
\draw[-, decorate, decoration={snake, amplitude=0.75mm} ] (n-2-4) -- node[ scale=0.75, yshift=5mm] [midway] {$S'_1$} (n-2-9);
\draw[->] (n-1-3) edge node[ scale=0.75, yshift=4mm, xshift=-3.7mm] [midway] {$a$}  (n-2-2);
\draw[->] (n-1-3) edge node[ scale=0.75, yshift=4mm, xshift=+3.7mm] [midway] {$b$}  (n-2-4);
 
\end{tikzpicture}
\] 

This is a shifted version of (S0) and 
we have already seen that this leads to a contradiction.
Since $\Omega$ is an autoequivalence, we deduce that $T'$ cannot be of the form (S1).

In case $T'$ starts as in (S2), we can repeatedly apply the Auslander-Reiten translation $\tau$
(viewed as an autoequivalence of $\widehat{A_1}-\ul{\mod} $)  to both $T'$ and $T_1$. Since $\widehat{A_1}$ is special biserial \cite{Ringel, Schroer}, the action of $\tau$ on strings is well understood (see e.g. \cite[Thm 4.1]{WW}).

\begin{align*}
\tau^n(T_1) = \begin{cases}  (1-n) \xrightarrow{a} (2-n) \xrightarrow{b} (3-n) \qquad \text{    if } n \text{ is even;} \\  
(1-n) \xrightarrow{b} (2-n) \xrightarrow{a} (3-n) \qquad \text{    if } n \text{ is odd.}   \end{cases} 
\end{align*}

If we apply $\tau$ to $T'$, we `remove a hook'  from the left of the string $T'$: 
\[
\begin{tikzpicture}[description/.style={fill=white,inner sep=2pt}]
    \matrix (n) [matrix of math nodes, row sep=1em,
                 column sep=0.7em, text height=1.5ex, text depth=0.25ex,
                 inner sep=0pt, nodes={inner xsep=0.3333em, inner
ysep=0.3333em}]
    {    && m-2  && m-2 && \, && && m-2 & \, \\
         &m-1 && m-1 && \quad  &\quad & & \quad & m-1 && \quad   \\
        m   \\
       };

\draw[->] (n-1-3) edge   (n-2-2);
\draw[->] (n-1-3) edge   (n-2-4);

\draw[->] (n-2-2) edge   (n-3-1); 

\draw[->] (n-1-5) edge   (n-2-4);

\draw[-, decorate, decoration={snake, amplitude=0.75mm} ] (n-1-5) -- node[ scale=0.75, yshift=5mm] [midway] {$S_2$} (n-1-7);

\draw[-> ] (n-2-6) -- node[ scale=0.75, yshift=5mm] [midway] {$\tau(-)$} (n-2-9);

\draw[->] (n-1-11) edge   (n-2-10); 

\draw[-, decorate, decoration={snake, amplitude=0.75mm} ] (n-1-11) -- node[ scale=0.75, yshift=5mm] [midway] {$S'_2$} (n-1-12);

\end{tikzpicture}
\] 
where $S'_2$ is non-zero (indeed otherwise there is no non-zero morphism $\tau(T') \to \tau(T_1)$).
After repeatedly applying $\tau$, we have removed all these hooks from $T'$ and reach a (shifted) version of (S0) or (S1) (indeed otherwise 
$T'$ would have the following form

\[
\begin{tikzpicture}[description/.style={fill=white,inner sep=2pt}]
    \matrix (n) [matrix of math nodes, row sep=1em,
                 column sep=0.7em, text height=1.5ex, text depth=0.25ex,
                 inner sep=0pt, nodes={inner xsep=0.3333em, inner
ysep=0.3333em}]
    { 
    &&&&&& &&  &&&             \qquad           \\
    &&&&&   &&&          -3  && \,  \\
    &&&&&       -2 & & -2 && \,  \\  
    && -1 && -1&& -1\\
    & 0 && 0 \\
    1 \\
       };

\draw[->] (n-5-2) edge   (n-6-1);
\draw[->] (n-4-3) edge   (n-5-2);
\draw[->] (n-4-3) edge   (n-5-4);

\draw[->] (n-4-5) edge   (n-5-4); 

\draw[->] (n-3-6) edge   (n-4-5); 
\draw[->] (n-3-6) edge   (n-4-7); 
\draw[->] (n-3-8) edge   (n-4-7);

\draw[->] (n-2-9) edge   (n-3-8);
\draw[->] (n-2-9) edge   (n-3-10);

\draw[-, dashed] (n-3-10) edge   (n-1-12);





%
\end{tikzpicture}
\]

and again there would be no non-zero map from $T'$ to $T_1$. Contradiction), which (as before) leads to a contradiction. 
This completes the proof of the case $E_f=1$. If $E_f=1 \xrightarrow{a} 2$, then there are the following two possibilities for $T'$ (with $E_f$ appearing as a substring on the left).

\begin{align}\tag{S3}
\begin{array}{c}
\begin{tikzpicture}[description/.style={fill=white,inner sep=2pt}]
    \matrix (n) [matrix of math nodes, ampersand replacement=\&, row sep=1em,
                 column sep=2.25em, text height=1.5ex, text depth=0.25ex,
                 inner sep=0pt, nodes={inner xsep=0.3333em, inner
ysep=0.3333em}]
    {    \&\& 0   \\
         \&1 \&\& 1 \&\&\&\&\& \quad    \\
        2   \\
       };
\draw[-, decorate, decoration={snake, amplitude=0.75mm} ] (n-2-4) -- node[ scale=0.75, yshift=4.5mm] [midway] {$S_3$}  (n-2-9);
\draw[->] (n-1-3) edge node[ scale=0.75, yshift=4mm, xshift=-3.7mm] [midway] {$b$}  (n-2-2);
\draw[->] (n-1-3) edge node[ scale=0.75, yshift=4mm, xshift=+3.7mm] [midway] {$a$}  (n-2-4);
\draw[->] (n-2-2) edge node[ scale=0.75, yshift=4mm, xshift=-3.7mm] [midway] {$a$}  (n-3-1); 
\end{tikzpicture}
\end{array}
\end{align}

\begin{align}\tag{S4}
\begin{array}{c}
\begin{tikzpicture}[description/.style={fill=white,inner sep=2pt}]
    \matrix (n) [matrix of math nodes, ampersand replacement=\&, row sep=1em,
                 column sep=2.25em, text height=1.5ex, text depth=0.25ex,
                 inner sep=0pt, nodes={inner xsep=0.3333em, inner
ysep=0.3333em}]
    {    1\&\& 1  \&\&\&\&\& \quad \\
         \&2 \&     \\
       };
\draw[-, decorate, decoration={snake, amplitude=0.75mm} ] (n-1-3) -- node[ scale=0.75, yshift=4.5mm] [midway] {$S_4$}  (n-1-8);
\draw[->] (n-1-3) edge node[ scale=0.75, yshift=-4mm, xshift=3.7mm] [midway] {$b$}  (n-2-2);
\draw[->] (n-1-1) edge node[ scale=0.75, yshift=-4mm, xshift=-3.7mm] [midway] {$a$}  (n-2-2);
\end{tikzpicture}
\end{array}
\end{align}

In case (S3) the morphism $x \colon T_1 \to T'$ defined by $E_f$ factors over the  projective-injective module $P_0$. So $\ul{x}=0$ contradicting our assumptions. In case (S4), we can apply the inverse syzygy functor $\Omega^{-1}$ to $T_1 \oplus T'$.

\begin{align*}
\Omega^{-1}(T_1)=0 \xrightarrow{a} 1 \xrightarrow{b} 2 
\end{align*}

and

\[
\begin{tikzpicture}[description/.style={fill=white,inner sep=2pt}]
    \matrix (n) [matrix of math nodes, row sep=1em,
                 column sep=2.25em, text height=1.5ex, text depth=0.25ex,
                 inner sep=0pt, nodes={inner xsep=0.3333em, inner
ysep=0.3333em}]
    {  
         &&-1    \\
        \Omega^{-1}(T')=& 0 && 0 &&&&& \quad \\
       };
\draw[-, decorate, decoration={snake, amplitude=0.75mm} ] (n-2-4) -- node[ scale=0.75, yshift=5mm] [midway] {$S'_4$} (n-2-9);
\draw[->] (n-1-3) edge node[ scale=0.75, yshift=4mm, xshift=-3.7mm] [midway] {$a$}  (n-2-2);
\draw[->] (n-1-3) edge node[ scale=0.75, yshift=4mm, xshift=+3.7mm] [midway] {$b$}  (n-2-4);
 
\end{tikzpicture}
\] 

This is a shifted version of (S0) and 
we have already seen that this leads to a contradiction. Since $\Omega^{-1}$ is an autoequivalence, we deduce that $T'$ cannot be of the form (S4). This completes the proof.

\medskip
\noindent
\emph{Acknowledgement}. 
My interest in this work was triggered by Igor Burban, who introduced me to the algebra $A_1$ and by Sefi Ladkani who asked me whether Proposition \ref{P:Notequiv} or its negation hold, see \cite{LadkaniNOTE}.
I would like to thank Dong Yang for pointing out the relation to \cite{derivedsimple}, many helpful discussions about my earlier attempts to show Proposition \ref{P:Notequiv} and constant motivation. Thanks to Joe Karmazyn for helping me with another unfortunately unsuccessful approach to prove this proposition. I'm grateful to Jan Schr{\"o}er for introducing me to representation theory, for discussions about \cite{SZ} (in particular, for refering me to \cite{Zimmermann}) and for helpful remarks.

This work was presented at the annual conference of the \emph{DFG-Priority Program in Representation Theory} (Bad Honnef 2015) and the workshop \emph{Silting theory and related topics}Ê (Verona 2015). Moreover, I had fruitful discussions about this article during my stay at the Mittag-Leffler Institute \emph{Research Program on Representation theory} (thanks, in particular, to Claus Ringel for his interest and his enthusiasm!) and the follow-up workshop on the \emph{Interaction of Representation Theory with Geometry and Combinatorics} at HIM Bonn (thanks to Lidia Angeleri H{\"u}gel, Steffen K{\"o}nig, Frederik Marks \& Jorge Vit{\'o}ria) in March 2015. I'm grateful to the organisers of all these events for giving me the opportunity to present and discuss my work and for creating a good working atmosphere.

I thank the anonymous referee for reading the article carefully and for many helpful comments and suggestions, which helped to improve the paper. I also thank Pieter Belmans, Sefi Ladkani and Yuya Mizuno, for  helpful remarks on an earlier version.

\frenchspacing

\end{document}